%
%
%
\documentclass{amsart}

\usepackage{amssymb,graphicx,tikz}
\usepackage{hyperref}

\usepackage{todonotes}
\usepackage{youngtab}
\usepackage{xypic}
\usepackage{graphicx}
\usepackage{tikz}

\newtheorem{theorem}{Theorem}[section]

\newtheorem{corollary}[theorem]{Corollary}
\newtheorem{prop}[theorem]{Proposition}

\theoremstyle{definition}
\newtheorem{definition}[theorem]{Definition}
\newtheorem{example}[theorem]{Example}
\newtheorem{problem}[theorem]{Problem}
\newtheorem{xca}[theorem]{Exercise}

\theoremstyle{remark}
\newtheorem{remark}[theorem]{Remark}
\newtheorem*{hint}{Hint}

\numberwithin{equation}{section}

\newcommand{\CC}{{\mathbb{C}}}
\newcommand{\KK}{{\mathbb{K}}}
\newcommand{\PP}{{\mathbb{P}}}
\newcommand{\ZZ}{{\mathbb{Z}}}
\newcommand{\FF}{{\mathbb{F}}}
\newcommand{\RR}{{\mathbb{R}}}

\newcommand{\ccL}{{\mathcal{L}}}

\newcommand{\cV}{{\mathcal{V}}}

\newcommand{\Fl}{{\mathcal{F}l}}

\newcommand{\fS}{{\mathfrak{S}}}

\newcommand{\qbinom}{\genfrac[]{0pt}{}}

\DeclareMathOperator{\GL}{GL}
\DeclareMathOperator{\Gr}{Gr}
\DeclareMathOperator{\Stab}{Stab}
\DeclareMathOperator{\rk}{rk}
\DeclareMathOperator{\vol}{vol}
\DeclareMathOperator{\Diff}{Diff}
\DeclareMathOperator{\Ann}{Ann}
\DeclareMathOperator{\Ker}{Ker}
\DeclareMathOperator{\codim}{codim}
\DeclareMathOperator{\mult}{mult}
\DeclareMathOperator{\ch}{ch}
\DeclareMathOperator{\wt}{Wt}

\setcounter{MaxMatrixCols}{20}





\newenvironment{rcgraph}{\begin{trivlist}\item\centering\footnotesize$}
			{$\end{trivlist}}





\def\textcross{\ \smash{\lower2pt\hbox{\rlap{\hskip2.07pt\vrule height10pt}}
                \raise2.8pt\hbox{\rlap{\hskip-3pt \vrule height.4pt depth0pt
                width10.7pt}}}\hskip10.7pt\!\!}
\def\textelbow{\ \hskip.1pt\smash{\raise2.8pt%
                \hbox{\co \hskip 2.07pt\rlap{\rlap{\char'005} \char'007}
                \lower5.2pt\rlap{\vrule height1.5pt}
                \raise4.2pt\rlap{\vrule height1.5pt}}
                \raise2.8pt\hbox{%
                  \rlap{\hskip-5.25pt \vrule height.4pt depth0pt width1.5pt}%
                  \rlap{\hskip4.25pt \vrule height.4pt depth0pt width1.5pt}}}
                \hskip6.7pt}


\font\co=lcircle10
\def\petit#1{{\scriptstyle #1}}

\def\jr{\smash{	\raise2pt\hbox{\co \rlap{\rlap{\char'005} \char'007}}
		\raise6pt\hbox{\rlap{\vrule height2pt}}
		\raise2pt\hbox{\rlap{\hskip4pt \vrule height0.4pt depth0pt
                 width2.5pt}}
		\raise2pt\hbox{\rlap{\hskip-6pt \vrule height.4pt depth0pt
                 width2.2pt}}
		\lower4pt\hbox{\rlap{\vrule height2.5pt}}}}
\def\je{\smash{\raise2pt\hbox{\co \rlap{\rlap{\char'005}
                \phantom{\char'007}}}\raise6pt\hbox{\rlap{\vrule height2pt}}
	       \raise2pt\hbox{\rlap{\hskip-6pt \vrule height.4pt depth0pt
                width2.2pt}}}}
\def\+{\smash{\lower4pt\hbox{\rlap{\vrule height12pt}}
                \raise2pt\hbox{\rlap{\hskip-6pt \vrule height.4pt depth0pt
                width12.5pt}}}}
\def\er{\smash{	\raise2pt\hbox{\co \rlap{\phantom{\rlap{\char'005}} \char'007}}
		\raise6pt\hbox{\rlap{\phantom{\vrule height2pt}}}
		\raise2pt\hbox{\rlap{\hskip4pt \vrule height0.4pt depth0pt
                 width2.5pt}}
		\raise2pt\hbox{\rlap{\phantom{%
		 \hskip-6pt \vrule height.4pt depth0pt width2.2pt}}}
		\lower4pt\hbox{\rlap{\vrule height2.5pt}}}}
\def\hor{\smash{\lower2pt\hbox{\rlap{\phantom{\vrule height10pt}}}
                \raise2pt\hbox{\rlap{\hskip-6pt \vrule height.4pt depth0pt
                width12.5pt}}}}
\def\ver{\smash{\lower2pt\hbox{\rlap{\vrule height10pt}}
                \raise2pt\hbox{\rlap{\phantom{%
		\hskip-6pt \vrule height.4pt depth0pt width12.5pt}}}}}


\def\perm#1#2{\hbox{\rlap{$\petit {#1}_{\scriptscriptstyle #2}$}}%
                \phantom{\petit 1}}

%
	{%
	 \def\*{\makebox[0ex]{\footnotesize$+\,$}}%
	 \begin{array}{|*{#1}{@{\ \;\:}c|}}}
	{\end{array}}

%
	{%
	 \begin{array}{c*{#1}{@{\ \ \;}c}}}
	{\end{array}}

\newenvironment{shortstretchpipedream}[1]%
	{%
	 \begin{array}{c*{#1}{@{\ \ \;}c}}}
	{\end{array}}

	{%
	 \begin{array}{c*{#1}{@{\ \ \;}c}}}
	{\end{array}}


\begin{document}

\title{Grassmannians, flag varieties, and Gelfand--Zetlin polytopes}

\author{Evgeny Smirnov}
\address{Faculty of Mathematics and Laboratory of Algebraic Geometry and its Applications, National Research University Higher School of Economics, Vavilova 7, Moscow 119312, Russia}
\address{Laboratoire franco-russe J.-V.~Poncelet (UMI~2615 du CNRS), Bolshoi Vlassievskii per., 11, Moscow 119002, Russia}
\email{esmirnov@hse.ru}
\thanks{Partially supported by Russian Science Foundation, RScF project 14-11-00414, Dynasty Foundation fellowship and Simons--IUM fellowship.}

\subjclass{Primary 14N15, 14M15; Secondary 14M25, 14L35}
\date{\today}

\keywords{Flag variety, Grassmannian, Schubert calculus, toric variety, Gelfand--Zetlin polytope}

\dedicatory{To the memory of Andrei Zelevinsky}

\begin{abstract} These are extended notes of my talk given at Maurice Auslander Distinguished Lectures and International Conference (Woods Hole, MA) in April 2013. Their aim is to give an introduction into Schubert calculus on Grassmannians and flag varieties. We discuss various aspects of Schubert calculus, such as applications to enumerative geometry, structure of the cohomology rings of Grassmannians and flag varieties, Schur and Schubert polynomials. We conclude with a survey of results of V.~Kiritchenko, V.~Timorin and the author on a new approach to Schubert calculus on full flag varieties via combinatorics of Gelfand--Zetlin polytopes.
\end{abstract}

\maketitle

\section{Introduction}

\subsection{Enumerative geometry}

Enumerative geometry deals with problems about finding the number of geometric objects satisfying certain conditions. The earliest problem of that kind was probably formulated (and solved) by Apollonius of Perga around 200 BCE:

\begin{problem}[Apollonius] Find the number of circles in the plane which are tangent to three given circles.
\end{problem}

Of course, the answer depends on the mutual position of the three given circles. For instance, if all circles are contained inside each other, no other circle can be tangent to all three. It turns out that for any number not exceeding 8 and not equal to 7 there exists a configuration of three circles such that the number of circles tangent to all of them is equal to this number. All these circles can be explicitly constructed with compass and straightedge.
 
Starting from the early 19th century mathematicians started to consider enumerative problems in projective geometry. The development of projective geometry is usually associated with the name of a French mathematician and military engineer Jean-Victor Poncelet. In his work ``Trait\'e des propri\'et\'es projectives des figures'', written during his imprisonment in Russia after Napoleon's campaign in 1813--1814 and published in 1822, Poncelet made two important choices: to work over the complex numbers rather than over the real numbers, and to work in projective space rather than the affine space. For example, with these choices made, we can say that a conic and a line in the plane always intersect in two points (counted with multiplicity), while for a conic and a line in the real affine plane the answer can be 0, 1, or 2. This is the first illustration of Poncelet's ``continuity principle'', discussed below.

In terms of complex projective geometry, a circle on the real plane is a smooth conic passing through two points $(1:i:0)$ and $(1:-i:0)$ at infinity. So the problem of Apollonius is essentially about the number of conics passing through given points and tangent to given conics. In 1848 Jacob Steiner dropped the condition that all conics pass through two given points and asked how many conics on the plane are tangent to given five conics. He also provided an answer to this problem: he claimed that this number is equal to $7776=6^5$. This number is so large that it cannot be checked by construction. However, this answer turned out to be wrong. Steiner did not give a complete solution to this problem; he just observed that the number of conics tangent to a given conic and passing through four given points is equal to 6, the number of conics tangent to two given conics and passing through three points is $36=6^2$, and so on. This fails already on the next step: the number $6^3$ gives an upper bound for the number of conics tangent to two conics and passing through  three points, but the actual number of such curves is always less than that!

In 1864 Michel Chasles published a correct answer\footnote{Sometimes this result is attributed to Ernest de Jonqui\`eres, a French mathematician, naval officer and a student of Chasles, who  never published it.} to Steiner's problem: the number of conics tangent to given five is equal to 3264.  Chasles found out that the number of conics in a one-parameter family that satisfy a single condition can be expressed in the form $\alpha\mu+\beta\nu$, where $\alpha$ and $\beta$ depend only on the condition (they were called \emph{characteristics}), while $\mu$ and $\nu$ depend only on the family: $\mu$ is the number of conics in the family passing through a given point and $\nu$ is the number of conics in the family tangent to a given line.

Given five conditions with ``characteristics'' $\alpha_i$ and $\beta_i$, Chasles found an expression for the number of conics satisfying all five. In 1873, Georges Halphen observed that Chasles's expression factors formally into the product
\[
 (\alpha_1\mu+\beta_1\nu)(\alpha_2\mu+\beta_2\nu)\dots(\alpha_5\mu+\beta_5\nu),
\]
provided that, when the product is expanded, $\mu^i\nu^{5-i}$ is replaced by the number of conics passing through $i$ points and tangent to $5-i$ lines.

This example inspired a German mathematician Hermann Schubert to develop a method for solving problems of enumerative geometry, which he called \emph{calculus of conditions}, and which is now usually referred to as \emph{Schubert calculus}. It was used to solve problems involving objects defined by \emph{algebraic} equations, for example, conics or lines in 3-space. Given certain such geometric objects,  Schubert represented conditions on them by algebraic symbols. 
Given two conditions, denoted by $x$ and $y$, he represented the new condition of imposing one or the other by $x+y$ and the new condition of imposing both simultaneously by $xy$. The conditions $x$ and $y$ were considered equal if they represented conditions equivalent for enumerative purposes, that is, if the number of figures satisfied by the conditions $xw$ and $yw$ were equal for every $w$ representing a condition such that both numbers were finite. Thus the conditions were formed into a ring.

For example, Chasles's expression $\alpha\mu+\beta\nu$ can be interpreted as saying that a condition on conics with characteristics $\alpha$ and $\beta$ is equivalent to the condition that the conic pass through any of $\alpha$ points or tangent to any of $\beta$ lines, because the same number of conics satisfy either condition  and simultaneously the condition to belong to any general one-parameter family. Furthermore, we can interpret Halphen's factorization as taking place in the ring  of conditions on conics.

One of the key ideas used by Schubert was as follows: two conditions are equivalent if one can be turned into the other by continuously varying the parameters on the first condition. This idea goes back to Poncelet, who called it \emph{the principle of continuity}, and said it was considered an axiom by many. However, it was criticized by Cauchy and others. Schubert called it first \emph{principle of special position} and then \emph{principle of conservation of number}.

For example, the condition on conics to be tangent to a given smooth conic is equivalent to the condition to be tangent to any smooth conic, because the first conic can be continuously translated to the second. Moreover, a smooth conic can be degenerated in a family into a pair of lines meeting at a point. Then the original condition is equivalent to the condition to be tangent to either line or to pass through the point. However, the latter condition must be doubled, because in a general one-parameter family  of conics, each conic through the point is the limit of two conics tangent to a conic in the family. Thus the characteristics on the original condition are $\alpha=\beta=2$.

As another example, let us consider the famous problem about four lines in  3-space, also dating back to Schubert. In this paper we will use this problem as a baby example to demonstrate various methods of Schubert calculus (see Example~\ref{ex:schubert_pieri} or the discussion at the end of Subsection~\ref{ssec:decomp} below).

\begin{problem}\label{prob:schubert} Let $\ell_1,\ell_2,\ell_3,\ell_4$ be four generic lines in a three-dimensional complex projective space. Find the number of lines meeting all of them.
\end{problem}

The solution proposed by Schubert was as follows. The condition on a line $\ell$ in 3-space to meet two skew lines $\ell_1$ and $\ell_2$ is equivalent to the condition that $\ell$ meet two intersecting lines. The same can be said about the lines $\ell_3$ and $\ell_4$. So the initial configuration can be degenerated in such a way that the first two lines would span a plane and the second two lines would span another plane. The number of lines intersecting all four would then remain the same according to the principle of conservation of the number. And for such a degenerate configuration of lines $\ell_1,\dots,\ell_4$ it is obvious that there are exactly two lines intersecting all of them: the first one passes through the points $\ell_1\cap\ell_2$ and $\ell_3\cap \ell_4$, and the other is obtained as the intersection of the plane spanned by $\ell_1$ and $\ell_2$ with the plane spanned by $\ell_3$ and $\ell_4$.

In his book ``Kalk\"ul der abz\"ahlenden Geometrie'' \cite{Schubert79}, published in 1879, Schubert proposed what he called the \emph{characteristic problem}. Given figures of fixed sort and given an integer $i$, the problem is to find a basis for the $i$-fold conditions (i.e., the conditions restricting freedom by $i$ parameters) and to find a dual basis for the $i$-parameter families, so that every $i$-fold condition is a linear combination of basis $i$-fold conditions, and so that the combining coefficients, called the ``characteristics'', are rational numbers, which can be found as the numbers of figures in the basic families satisfying the given conditions. We have already seen this approach in the example with the conics tangent to given five.

In his book Schubert solved the characteristics problem for a number of cases, including conics in a plane, lines in 3-space, and point-line flags in 3-space. In some other cases, he had a good understanding of what these basis conditions should be, which allowed him to find the number of figures satisfying various combinations of these conditions. In particular, he computed the number of twisted cubics
tangent to 9 general quadric surfaces in 3-space and got the right answer: 5,819,539,783,680; a really impressive achievement for the pre-computer era!

In 1886 Schubert solved the general case of characteristic problem for projective subspaces. For this he introduced the Schubert cycles on the Grassmannian and, in modern terms, showed that they form a self-dual basis of its cohomology group. Further, he proved the first case of the Pieri rule, which allowed him to compute the intersection of a Schubert variety with a Schubert divisor. Using this result, he showed that the number of $k$-planes in an $n$-dimensional space meeting $h$ general $(n-d)$-planes is equal to $\frac{1!\cdot 2!\cdot\dots \cdot k! \cdot h!}{(n-k)!\cdot\dots \cdot n!}$, where $h=(k+1)(n-k)$. In other words, he found the degree of the Grassmannian $\Gr(k,n)$ under the Pl\"ucker embedding. We will discuss these results in Section~\ref{sec:grassmann}.

With this new technique Schubert solved many problems which had already been solved, and many other problems which previously defied solution. Although his methods, based on the principle of conservation of number, lacked rigorous foundation, there was no doubt about their validity. In 1900, Hilbert formulated his famous list of 23 problems. The 15th problem was entitled ``Rigorous foundation of Schubert's enumerative calculus'', but in his discussion of the problem he made clear that he wanted Schubert's numbers to be checked.

In the works of Severi, van der Waerden and others, Schubert calculus was given a rigorous reinterpretation. To begin with, we need to define a variety parametrizing all the figures of the given sort. An $i$-parameter family corresponds to an $i$-dimensional subvariety, while an $i$-fold condition yields a cycle of codimension $i$, that is, a linear combination of subvarieties of codimension $i$. The sum and product of conditions becomes the sum and intersection product of cycles.

The next step is to describe the ring of conditions. For this van der Waerden proposed to use the topological intersection theory. Namely, each cycle yields a cohomology class in a way preserving sum and product. Moreover, continuously varying the parameters of a condition, and so the cycle, does not alter its class; this provides us with a rigorous interpretation of the principle of conservation of number. 

Furthermore, the cohomology groups are finitely generated. So we may choose finitely many basic conditions and express the class of any condition uniquely as a linear combination of those. Thus an important part of the problem is to describe the algebraic structure of the cohomology ring of the variety of all figures of the given sort. We will provide (to some extent) such a description for Grassmannians, i.e., varieties of $k$-planes in an $n$-space, and full flag varieties.

Finally, it remains to establish the enumerative significance of the numbers obtained in computations with the cohomology ring. For this we need to consider the action of the general linear group on the parameter variety for figures and to ask whether the intersection of one subvariety and a general translate of the other is transversal. Kleiman's transversality theorem, which we discuss in Subsection~\ref{ssec:kleiman}, asserts that the answer is affirmative if the group acts transitively on the parameter space; in particular, this is the case for Grassmannians and flag varieties.

\subsection{Structure of this paper} The main goal of this paper is to give an introduction into Schubert calculus. More specifically, we will speak about Grassmannians and complete flag varieties. We restrict ourselves with the type $A$, i.e., homogeneous spaces of $\GL(n)$.

In Section~\ref{sec:grassmann} we define Grassmannians, show that they are projective algebraic varieties and define their particularly nice cellular decomposition: the Schubert decomposition. We show that the cells of this decomposition are indexed by Young diagrams, and the inclusion between their closures, Schubert varieties, is also easily described in this language. Then we pass to the cohomology rings of Grassmannians and state the Pieri rule, which allows us to multiply cycles in the cohomology ring of a Grassmannian by a cycle of some special form. Finally, we discuss the relation between Schubert calculus on Grassmannians and theory of symmetric functions, in particular, Schur polynomials.

Section~\ref{sec:flags} is devoted to full flag varieties. We mostly follow the same pattern: define their Schubert decomposition, describe the inclusion order on the closures of Schubert cells, describe the structure of the coholomology ring of a full flag variety and formulate the Monk rule for multiplying a Schubert cycle by a divisor. Then we define analogues of Schur polynomials, the so-called Schubert polynomials, and discuss the related combinatorics. The results of these two sections are by no means new, they can be found in many sources; our goal was to present a short introduction into the subject. A more detailed exposition can be found, for example, in \cite{Fulton97} or \cite{Manivel98}.

In the last two sections we discuss a new approach to Schubert calculus of full flag varieties, developed in our recent joint paper \cite{KiritchenkoSmirnovTimorin12} with Valentina Kiritchenko and Vladlen Timorin. This approach 
uses some ideas and methods from the theory of toric varieties (despite the fact that flag varieties are not toric). In Section~\ref{sec:toric} we recall some notions related with toric varieties, including the notion of the Khovanskii--Pukhlikov ring of a polytope. Finally, in Section~\ref{sec:gz} we state our main results: to each Schubert cycle we assign a linear combination of faces of a Gelfand--Zetlin polytope (modulo some relations) in a way respecting multiplication: the product of Schubert cycles corresponds to the intersection of the sets of faces. Moreover, this set of faces allows us to find certain invariants of the corresponding Schubert variety, such as its degree under various embedding.

This text is intended to be introductory, so we tried to keep the exposition  elementary and to focus on concrete examples whenever possible. 

As a further reading on combinatorial aspects of Schubert calculus, we would recommend the books \cite{Fulton97}  by William Fulton and \cite{Manivel98} by Laurent Manivel. The reader who is more interested in geometry might want to look at the wonderful lecture notes by Michel Brion \cite{Brion05} on geometric aspects of Schubert varieties or the book \cite{BrionKumar05} by Michel Brion and Shrawan Kumar on Frobenius splitting and its applications to geometry of Schubert varieties. However, these texts are more advanced and require deeper knowledge of algebraic geometry.

More on the history of Schubert calculus and Hilbert's 15th problem can be found in Kleiman's paper on Hilbert's 15th problem \cite{Kleiman76} or in the preface to the 1979 reprint of Schubert's book \cite{Schubert79}.

\subsection*{Acknowledgements} These are extended notes of my talk given at Maurice Auslander Distinguished Lectures and International Conference (Woods Hole, MA) in April 2013. I am grateful to the organizers of this conference: Kyoshi Igusa, Alex Martsinkovsky, and Gordana Todorov, for their kind invitation. I also express my deep gratitude to the referee for their valuable comments, especially on the introductory part, which helped to improve the paper.

These notes are also based on my minicourse ``Geometry of flag varieties'' which was given in June 2012 at the Third School and Conference ``Lie Algebras, Algebraic Groups and Invariant Theory'' in Togliatti, Russia, and in September 2013 at the University of Edinburgh, Scotland, in the framework of the LMS program ``Young British and Russian Mathematicians''. They were partially written up during my visit to the University of Warwick, England, in September--October 2014. I would like to thank these institutions and personally Ivan Cheltsov and Miles Reid for their warm hospitality.

I dedicate this paper to the memory of Andrei Zelevinsky, who passed away in April 2013, several days before the Maurice Auslander Lectures. Andrei's style of research, writing and teaching mathematics will always remain a wonderful example and a great source of inspiration for me.

\section{Grassmannians}\label{sec:grassmann}

\subsection{Definition} Let $V$ be an $n$-dimensional vector space over $\CC$, and let $k<n$ be a positive integer.

\begin{definition} A \emph{Grassmannian} (or a \emph{Grassmann variety}) of $k$-planes in $V$ is the set of all $k$-dimensional vector subspaces $U\subset V$. We will denote it by $\Gr(k,V)$.
\end{definition}


\begin{example} For $k=1$, the Grassmannian $\Gr(1,V)$ is nothing but the projectivization $\PP V$ of the space $V$.
\end{example}

Our first observation is as follows: $\Gr(k,V)$ is a \emph{homogeneous $\GL(V)$-space}, i.e., the group $\GL(V)$ of nondegenerate linear transformations of $V$ acts transitively on $\Gr(k,V)$. Indeed, every $k$-plane can be taken to any other $k$-plane by a linear transform.

Let us compute the stabilizer of a point $U\in\Gr(k,V)$ under this action. To do this,  pick a basis $e_1,\dots,e_n$ of $V$ and suppose that $U$ is spanned by the first $k$ basis vectors: $U=\langle e_1,\dots,e_k\rangle$. We see that this stabilizer, which we denote by $P$,  consists of nondegenerate block matrices with zeroes on the intersection of  the first $k$ columns and the last $n-k$ columns:
\[
P=\Stab_{\GL(V)}U=\begin{pmatrix} * & *\\ 0 & *\end{pmatrix}.
\]

A well-known fact from the theory of algebraic groups states that for an algebraic group $G$ and its algebraic subgroup $H$ the set $G/H$ has a unique structure of a quasiprojective variety such that the standard $G$-action on $G/H$ is algebraic (cf., for instance, \cite[Sec.~3.1]{OnishchikVinberg90}). Since $P$ is an algebraic subgroup in $\GL(V)$, this means that $\Gr(k,V)$ is a quasiprojective variety. In the next subsection we will see that it is a projective variety.

\begin{remark} One can also work with Lie groups instead of algebraic groups. The same argument shows that $\Gr(k,V)$ is a smooth complex-analytic manifold.
\end{remark}

The dimension of $\Gr(k,V)$ as a variety (or, equivalently, as a smooth manifold) equals the dimension of the group $\GL(V)$ minus the dimension of the stabilizer of a point:
\[
\dim\Gr(k,V)=\dim\GL(V)-\dim P=k(n-k).
\]

The construction of $\Gr(k,V)$ as the quotient of an algebraic group $\GL(V)$ over its parabolic subgroup makes sense for any ground field $\KK$, not necessarily $\CC$ (and even not necessarily algebraically closed).  Note that $\GL(n,\KK)$ acts transitively on the set of $k$-planes in $\KK^n$ for an arbitrary field $\KK$, so the $\KK$-points of this variety bijectively correspond to $k$-planes in $\KK^n$.

In particular, we can consider a Grassmannian over a finite field $\FF_q$ with $q$ elements. It is an algebraic variety over a finite field; its $\FF_q$-points correspond to $k$-planes in $\FF_q^n$ passing through the origin. Of course, the number of these points is finite.

\begin{xca}\label{xca:finitefield}  Show that the number of points in $\Gr(k,\FF_q^n)$ is given by the following formula:
\[
\#\Gr(k,\FF_q^n)=\frac{(q^n-1)(q^n-q)\dots(q^n-q^{n-k+1})}{(q^k-1)(q^k-q)\dots(q^k-q^{k-1})}.
\]
This expression is called a \emph{$q$-binomial coefficient} and denoted by $\qbinom{n}{k}_q$.
Show that this expression is a polynomial in $q$ (i.e., the numerator is divisible by the denominator) and  its value for $q=1$ equals the ordinary binomial coefficient $\binom{n}{k}$.
\end{xca}

\subsection{Pl\"ucker embedding} Our next goal is to show that it is a \emph{projective variety}, i.e., it can be defined as the zero locus of a system of homogeneous polynomial equations in a projective space. To do this, let us construct an embedding of $\Gr(k,V)$ into the projectivization of the $k$-th exterior power $\Lambda^k V$ of $V$.

Let $U$ be an arbitrary $k$-plane in $V$. Pick a basis $u_1,\dots, u_k$ in $U$ and consider the exterior product of these vectors $u_1\wedge\dots \wedge u_k\in\Lambda^k V$. For any other basis $u_1',\dots,u_k'$ in $U$, the exterior product of its vectors is proportional to $u_1,\dots, u_k$, where the coefficient of proportionality equals the determinant of the corresponding base change. This means that a subspace $U$ defines an element in $\Lambda^k V$ \emph{up to a scalar}, or, in different terms, defines an element $[u_1\wedge\dots\wedge u_k]\in\PP\Lambda^k V$. This gives us a map
\[
\Gr(k,V)\to \PP\Lambda^k V.
\]
This map is called \emph{the Pl\"ucker map}, or \emph{the Plucker embedding}.

\begin{xca} Show that the Pl\"ucker map is injective: distinct $k$-planes are mapped into distinct elements of $\PP\Lambda^k V$.
\end{xca}

To show that it is indeed an embedding, we need to prove the injectivity of its differential and the existence of a polynomial inverse map in a neighborhood of each point. This will be done further, in Corollary~\ref{cor:gr_algebraic}.

An obvious but important feature of the Pl\"ucker map is that it is \emph{$\GL(V)$-equivariant}: it commutes with the natural $\GL(V)$-action on $\Gr(k,V)$ and $\PP\Lambda^k V$. In particular, its image is a closed $\GL(V)$-orbit.

A basis $e_1,\dots,e_n$ of $V$ defines a basis of $\Lambda^k V$: its elements are of the form $e_{i_1}\wedge\dots\wedge e_{i_k}$, where the sequence of indices is increasing: $1\leq i_1<i_2<\dots<i_k\leq n$. This shows, in particular, that $\dim\Lambda^k V=\binom{n}{k}$. This basis defines  a system of homogeneous coordinates on $\PP\Lambda^k V$; denote the coordinate dual to $e_{i_1}\wedge\dots e_{i_k}$ by $p_{i_1,\dots,i_k}$.

\begin{prop} The image of $\Gr(k,V)$ under the Pl\"ucker map is defined by homogeneous polynomial equations on $\PP\Lambda^k V$.
\end{prop}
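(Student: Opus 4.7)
The approach is to characterize the image of the Plücker map as the locus of \emph{decomposable} $k$-vectors: by construction, $[\omega]\in\PP\Lambda^k V$ lies in the image iff $\omega=u_1\wedge\dots\wedge u_k$ for some $u_i\in V$. The task therefore reduces to exhibiting homogeneous polynomial equations in the Plücker coordinates $p_{i_1,\dots,i_k}$ that cut out the decomposable locus.

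The key device is the linear map $\phi_\omega\colon V\to\Lambda^{k+1}V$ given by $v\mapsto v\wedge\omega$. I claim that a nonzero $\omega$ is decomposable iff $\dim\ker\phi_\omega\geq k$ (in which case, equality holds and $\ker\phi_\omega$ is the subspace $U$ represented by $[\omega]$). One direction is immediate: if $\omega=u_1\wedge\dots\wedge u_k$, then $\ker\phi_\omega\supseteq\langle u_1,\dots,u_k\rangle$. For the converse, let $u_1,\dots,u_k$ be a basis of a $k$-dimensional subspace contained in $\ker\phi_\omega$ and extend to a basis $u_1,\dots,u_n$ of $V$. Expanding $\omega=\sum_{|I|=k}a_I u_I$ in the induced basis of $\Lambda^k V$ and using linear independence of the basis of $\Lambda^{k+1}V$, the vanishing $u_j\wedge\omega=0$ for $j=1,\dots,k$ forces $a_I=0$ whenever $\{1,\dots,k\}\not\subseteq I$; the only surviving term is $I=\{1,\dots,k\}$, so $\omega\in\CC\cdot u_1\wedge\dots\wedge u_k$.

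Having obtained the decomposability criterion, the final step is algebraic. Fix bases of $V$ and $\Lambda^{k+1}V$; then $\phi_\omega$ is represented by a matrix whose entries are linear forms in the Plücker coordinates of $\omega$. The condition $\dim\ker\phi_\omega\geq k$ is equivalent to $\rk\phi_\omega\leq n-k$, which is cut out by the vanishing of all $(n-k+1)\times(n-k+1)$ minors of this matrix. Each such minor is a homogeneous polynomial in the $p_{i_1,\dots,i_k}$, so the image of the Plücker map is the zero locus of a collection of homogeneous polynomial equations on $\PP\Lambda^k V$.

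I expect the main obstacle to be the nontrivial direction of the decomposability criterion, but as sketched above it reduces to a direct computation in a basis adapted to $\ker\phi_\omega$. One could alternatively exhibit explicit quadratic generators (the classical Plücker relations) by contracting $\omega\otimes\omega$ against $(k-1)$-forms, but for the purposes of this proposition the minor equations already suffice to realize $\Gr(k,V)$ as a closed subvariety of $\PP\Lambda^k V$.
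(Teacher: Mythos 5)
Your proposal is correct and follows essentially the same route as the paper: characterize the image as the locus of decomposable $k$-vectors, introduce the map $\Phi_\omega\colon V\to\Lambda^{k+1}V$, $v\mapsto v\wedge\omega$, show decomposability is equivalent to $\dim\Ker\Phi_\omega\geq k$, and observe that this rank condition is cut out by vanishing of the $(n-k+1)\times(n-k+1)$ minors, which are homogeneous in the Pl\"ucker coordinates. The one difference is that you supply a full proof of the converse direction of the decomposability criterion (via a basis adapted to $\Ker\Phi_\omega$), a step the paper explicitly leaves as an exercise, so your write-up is, if anything, more self-contained.
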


\begin{proof} Recall that a multivector $\omega\in\Lambda^kV$ is called \emph{decomposable} if $\omega=v_1\wedge\dots\wedge v_k$ for some $v_1,\dots,v_k\in V$. We want to show that the set of all decomposable multivectors can be defined by polynomial equations.

Take some $\omega\in \Lambda^k V$. 
We can associate to it a map $\Phi_\omega\colon V\to\Lambda^{k+1}V$, $v\mapsto v\wedge \omega$. It is easy to see that $v\in\Ker\Phi_\omega$ iff $\omega$ is ``divisible'' by $v$, i.e., there exists a $(k-1)$-vector $\omega'\in\Lambda^{k-1}V$ such that $\omega=\omega'\wedge v$ (show this!). This means that $\dim\Ker\Phi_\omega$ equals $k$  if $\omega$ is decomposable and is less than $k$ otherwise; clearly, it cannot exceed $k$. This means that the decomposability of $\omega$ is equivalent to the inequality $\dim \Ker\Phi_\omega\geq k$. This condition is algebraic: it is given by vanishing of all its minors of order $n-k+1$ in the corresponding matrix of size $\binom{n}{k}\times n$, and these are homogeneous polynomials in the coefficients of $\omega$ of degree $n-k+1$. 
\end{proof}

\begin{example}\label{ex:g24naive} Let $n=4$ and $k=2$. The previous proposition shows that $\Gr(k,V)$ is cut out by equations of degree 3. As an exercise, the reader can try to find the number of these equations.
\end{example}

\begin{corollary}\label{cor:gr_algebraic} $\Gr(k,V)$ is an irreducible projective algebraic variety.
\end{corollary}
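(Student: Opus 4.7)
The plan is to combine the preceding Proposition (whose conclusion is that the image $X$ of the Plücker map is cut out in $\PP\Lambda^k V$ by homogeneous polynomial equations, hence is a projective algebraic set) with a local inversion of the Plücker map on a standard affine cover, together with an irreducibility argument drawn from the homogeneous-space description $\Gr(k,V)=\GL(V)/P$ established above.

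In more detail, for each strictly increasing sequence $I=(i_1<\dots<i_k)$ in $\{1,\dots,n\}$ let $X_I\subset X$ be the open subset $\{[\omega]\in X:p_I(\omega)\neq 0\}$. A $k$-plane $U$ is sent into $X_I$ exactly when $U$ is transverse to the coordinate subspace $\langle e_j:j\notin I\rangle$, and in that case $U$ has a unique basis whose coordinate matrix $A(U)$ of size $k\times n$ contains the $k\times k$ identity in the columns indexed by $I$. The remaining $k(n-k)$ entries identify the preimage of $X_I$ in $\Gr(k,V)$ with the affine space $\CC^{k(n-k)}$. A direct computation shows that on this chart each ratio $p_J/p_I$ equals the $k\times k$ minor of $A(U)$ in columns $J$, and is therefore a polynomial in the $k(n-k)$ local coordinates. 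This both exhibits the Plücker map as a morphism on each chart, and, by reading off $A(U)$ from the minors $p_J(\omega)/p_I(\omega)$, produces an explicit polynomial inverse $X_I\to\pi^{-1}(X_I)$. Since every $k$-plane has some nonvanishing Plücker coordinate, the $X_I$ cover $X$; combined with the injectivity from the previous Exercise, this shows that the Plücker map is an isomorphism of algebraic varieties onto $X$, so $\Gr(k,V)\cong X$ is a projective algebraic variety.

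Irreducibility then follows because $\Gr(k,V)=\GL(V)/P$ is the image of the irreducible algebraic group $\GL(V)$ under a surjective morphism, and the image of an irreducible space is irreducible. Equivalently, the affine charts just constructed are each isomorphic to the irreducible $\CC^{k(n-k)}$ and pairwise meet nontrivially (any two transverse positions share a common refinement), which suffices.

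The main technical step is the verification that on each chart the ratios $p_J/p_I$ really are polynomial in the chart coordinates, equivalently, that every maximal minor of the normalized matrix $A(U)$ is a polynomial in the non-identity entries; this follows from Laplace expansion along the identity columns. Once this identification is in place, the construction of the inverse and the covering statement are formal.
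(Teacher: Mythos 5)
Your proof is correct and follows essentially the same strategy as the paper: both combine the preceding Proposition (the image is cut out by homogeneous equations) with the observation that the local chart coordinates on $\Gr(k,V)$ are recoverable as (ratios of) Pl\"ucker coordinates, giving a local polynomial inverse, and both derive irreducibility from the surjection $\GL(V)\to\Gr(k,V)$ with $\GL(V)$ irreducible. The only real difference is presentational: the paper verifies the local-inverse property in the single chart around $\langle e_1,\dots,e_k\rangle$ and appeals to $\GL(V)$-equivariance to cover all other points, whereas you perform the chart computation for every index set $I$ directly, which makes the embedding step a bit more self-contained (no appeal to homogeneity there) at the cost of a little extra bookkeeping; your alternative irreducibility argument via overlapping irreducible affine charts is also valid and is not in the paper.
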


\begin{proof} With the previous proposition, it remains to show that $\Gr(k,V)$ is irreducible and that the differential of the Pl\"ucker map is injective at each point. The first assertion follows from the fact that $\Gr(k,V)$ is a $\GL(V)$-homogeneous variety, and $\GL(V)$ is irreducible, so $\Gr(k,V)$ is an image of an irreducible variety under a polynomial map, hence irreducible. 

Since $\Gr(k,V)$ is a homogeneous variety, for the second assertion it is enough to prove the injectivity of the differential at an arbitrary point of $\Gr(k,V)$. Let us do this for the point $U=\langle e_1,\dots,e_k\rangle$, where $e_1,\dots,e_n$ is a standard basis of $V$. Let $W\in\Gr(k,V)$ be a point from a neighborhood of $U$; we can suppose that the corresponding $k$-space is spanned by the rows of the matrix
\[
 \begin{pmatrix} 
1 & 0 & \dots & 0 & x_{11} &\dots & x_{1,n-k}\\
0 & 1 & \dots & 0 & x_{21} &\dots & x_{2,n-k}\\
\vdots & \vdots & \ddots &\vdots& \vdots & \ddots & \vdots\\
0 & 0 & \dots & 1 & x_{k1} &\dots & x_{k,n-k}\\
 \end{pmatrix}.
\]
Then all these local coordinates $x_{ij}$ can be obtained as Pl\"ucker coordinates: $x_{ij}=p_{1,2,\dots,\widehat{\imath},j+k,\dots,k}$. This means that the differential of the Pl\"ucker map is injective and locally on its image it has a polynomial inverse, so this map is an embedding. Further we will use the term ``Pl\"ucker embedding'' instead of ``Pl\"ucker map''.
\end{proof}

This ``naive'' system of equations is of a relatively high degree. In fact, a much stronger result holds.
\begin{theorem}\label{thm:plucker}
$\Gr(k,V)\subset \PP\Lambda^k V$ can be defined by a system of \emph{quadratic} equations in a \emph{scheme-theoretic} sense: there exists a system of quadratic equations generating the homogeneous ideal of $\Gr(k,V)$. These equations are called \emph{the Pl\"ucker equations}.
\end{theorem}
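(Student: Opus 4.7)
The strategy is to produce an explicit system of quadratic equations, the Pl\"ucker relations, and argue in two stages: first that they cut out $\Gr(k,V)$ set-theoretically, and second that the homogeneous ideal of $\Gr(k,V)$ is generated by them. The proof of the previous proposition used vanishing of minors of order $n-k+1$, which is far too coarse; the key observation is that the characterization of decomposability via $\dim\Ker\Phi_\omega\geq k$ can be reformulated using contractions, and this reformulation is intrinsically quadratic in $\omega$.

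For the set-theoretic statement, the plan is as follows. For a covector $\alpha\in\Lambda^{k-1}V^*$ and a multivector $\omega\in\Lambda^k V$, the contraction $\iota_\alpha(\omega)\in V$ is defined. I would prove that $\omega$ is decomposable if and only if
\[
\iota_\alpha(\omega)\wedge\omega=0 \quad\text{in}\quad \Lambda^{k+1}V\quad\text{for every }\alpha\in\Lambda^{k-1}V^*.
\]
In a basis these conditions expand into the classical Pl\"ucker relations
\[
\sum_{\ell=1}^{k+1} (-1)^\ell\, p_{i_1,\dots,i_{k-1},j_\ell}\, p_{j_1,\dots,\widehat{j_\ell},\dots,j_{k+1}}=0,
\]
which are quadratic in the Pl\"ucker coordinates. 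One direction is immediate: if $\omega=v_1\wedge\dots\wedge v_k$, then $\iota_\alpha(\omega)$ lies in $\langle v_1,\dots,v_k\rangle$ and therefore wedges to zero with $\omega$. For the converse, set $U_\omega:=\Ker\Phi_\omega=\{v\in V : v\wedge\omega=0\}$. The hypothesis says $\iota_\alpha(\omega)\in U_\omega$ for all $\alpha$, so $U_\omega$ contains the image of the contraction map $\Lambda^{k-1}V^*\to V$, $\alpha\mapsto\iota_\alpha(\omega)$. A straightforward linear-algebra argument (writing $\omega$ in coordinates and exhibiting contractions that yield $k$ linearly independent vectors whenever $\omega\neq 0$) shows $\dim U_\omega\geq k$, and by the previous proposition $\omega$ is decomposable.

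The scheme-theoretic statement is the harder step, and I expect this to be the main obstacle. Set-theoretic vanishing is not enough: one must verify that the quadratic Pl\"ucker relations cut out the radical homogeneous ideal, and moreover generate it. My plan is to use the $\GL(V)$-equivariance of the Pl\"ucker embedding and a bit of representation theory. The degree-two part of the ideal sits inside $S^2\Lambda^k V^*$, which as a $\GL(V)$-module decomposes into irreducibles; the kernel of the multiplication map $S^2\Lambda^k V^*\to H^0(\Gr(k,V),\mathcal{O}(2))$ is exactly the irreducible component whose highest weight vector is a Pl\"ucker relation, so the ideal in degree $2$ is spanned by Pl\"ucker relations. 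To extend this to every degree, one would use a straightening-law argument: standard monomials in the Pl\"ucker coordinates form a basis of the homogeneous coordinate ring $\bigoplus_d H^0(\Gr(k,V),\mathcal{O}(d))$, and any monomial can be rewritten as a linear combination of standard ones modulo the ideal generated by the quadratic Pl\"ucker relations. This produces a surjection from the quadratic quotient onto the homogeneous coordinate ring, matching dimensions degree by degree (the dimensions on the Grassmannian side are computed by the Borel--Weil theorem), and gives the desired generation statement. The delicate point—and the essential work—is the verification of the straightening law, or equivalently the vanishing of the relevant higher $\mathrm{Tor}$ groups.
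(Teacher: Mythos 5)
The paper does not actually prove Theorem~\ref{thm:plucker}: immediately after the statement it says ``We will not prove this theorem here; its proof can be found, for instance, in [Hodge--Pedoe, Ch.~XIV]'', and then only gives a set-theoretic argument in the special case $k=2$ (via $\omega\wedge\omega=0$). So there is nothing in the paper to compare your scheme-theoretic argument against; you are filling a gap the paper deliberately leaves open.

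Your plan is sound and is, in fact, the standard route that the cited reference takes. The set-theoretic part via contractions is correct, and is more general than the paper's $k=2$ computation. One point worth tightening: the ``straightforward linear-algebra argument'' should be phrased as follows. The image of the contraction map $\alpha\mapsto\iota_\alpha(\omega)$ is the \emph{support} $W_\omega$ of $\omega$, the smallest subspace $W$ with $\omega\in\Lambda^k W$, and one always has $\dim W_\omega\geq k$ while $\dim\Ker\Phi_\omega\leq k$ (for $\omega\neq 0$). The hypothesis $\iota_\alpha(\omega)\wedge\omega=0$ for all $\alpha$ gives $W_\omega\subseteq\Ker\Phi_\omega$, forcing both to be $k$-dimensional and equal, whence $\omega\in\Lambda^k W_\omega$ is decomposable. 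Stated this way you avoid any coordinate computation.

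For the scheme-theoretic part, your outline is correct but one statement is imprecise: the kernel of $S^2\Lambda^k V^*\to H^0(\Gr(k,V),\mathcal O(2))$ is \emph{not} a single irreducible component in general; $S^2\Lambda^k V^*$ decomposes into several irreducibles, of which exactly one (the one with highest weight $(2^k)$) survives, and the kernel is the sum of all the others. What you need to check is that each of these summands lies in the span of the Plücker relations, which requires either a highest-weight-vector analysis for each summand or (more simply) just the straightening law itself, which makes the degree-2 representation theory unnecessary. You already identify the straightening law as the genuine content of the proof, which is the right diagnosis; that is the combinatorial heart of the argument in Hodge--Pedoe and in the later standard-monomial literature, and writing it out is real work but well-trodden.
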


We will not prove this theorem here; its proof can be found, for instance, in \cite[Ch.~XIV]{HodgePedoe52}. We will only write down the Pl\"ucker equations of a Grassmannian of 2-planes $\Gr(2,n)$. For this we will use the following well-known fact from linear algebra (cf., for instance,~\cite{DummitFoote04}).

\begin{prop}\label{prop:bivect} A bivector $\omega\in \Lambda^2 V$ is decomposable iff $\omega\wedge\omega=0$. 
\end{prop}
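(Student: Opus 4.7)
The forward implication is immediate: if $\omega = v_1 \wedge v_2$, then $\omega \wedge \omega = v_1 \wedge v_2 \wedge v_1 \wedge v_2 = 0$ because the factor $v_1$ is repeated.

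For the converse, I would proceed by induction on $n = \dim V$. The base cases $n \leq 3$ are essentially free: $\Lambda^4 V = 0$ in those dimensions, so $\omega \wedge \omega = 0$ is automatic, and a direct check (trivial for $n \leq 2$; for $n = 3$, by factoring out any nonzero coefficient) shows that every bivector is decomposable anyway. For the inductive step, assume the result in every lower dimension and let $\omega \in \Lambda^2 V$ satisfy $\omega \wedge \omega = 0$. Fix a basis $e_1, \dots, e_n$ of $V$ and set $V' = \langle e_2, \dots, e_n\rangle$, so that
\[
\omega = e_1 \wedge \alpha + \beta, \qquad \alpha \in V',\ \beta \in \Lambda^2 V'.
\]
A short computation (using that $\beta$, being of even degree, commutes with $\alpha$ in the exterior algebra) yields
\[
\omega \wedge \omega = 2\, e_1 \wedge \alpha \wedge \beta + \beta \wedge \beta.
\]

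These two summands lie in complementary subspaces of $\Lambda^4 V$---the first is divisible by $e_1$, the second lies entirely in $\Lambda^4 V'$---so each must vanish separately. By the induction hypothesis applied to $\beta \in \Lambda^2 V'$, we have $\beta = u_1 \wedge u_2$ for some $u_1, u_2 \in V'$, and then the vanishing $e_1 \wedge \alpha \wedge u_1 \wedge u_2 = 0$ forces $\alpha \in \langle u_1, u_2\rangle$, say $\alpha = c_1 u_1 + c_2 u_2$.

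What remains is to repackage $\omega = e_1 \wedge (c_1 u_1 + c_2 u_2) + u_1 \wedge u_2$ as a single wedge of two vectors. If $(c_1, c_2) = (0,0)$ then $\omega = u_1 \wedge u_2$ is already in the desired form; otherwise one chooses scalars $\mu, \nu$ with $\mu c_2 - \nu c_1 = 1$ and checks directly that
\[
\omega = (e_1 + \mu u_1 + \nu u_2) \wedge (c_1 u_1 + c_2 u_2).
\]
I expect the only genuine obstacle to be spotting the clean split of $\omega \wedge \omega$ into pieces living in complementary subspaces of $\Lambda^4 V$; once this is in hand, both the induction on $\beta$ and the final explicit repackaging are routine bookkeeping.
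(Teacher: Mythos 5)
Your argument is correct. Note first that the paper does not actually prove this proposition: it is stated as a well-known fact from linear algebra with a reference to Dummit--Foote, and is then used only to derive the Pl\"ucker quadrics for $\Gr(2,n)$. So there is no in-paper proof to compare against. The standard textbook route is to bring $\omega$ to skew-normal form $\omega=\sum_{i=1}^{r}v_{2i-1}\wedge v_{2i}$ with the $v_j$ linearly independent and observe that $\omega\wedge\omega=2\sum_{i<j}v_{2i-1}\wedge v_{2i}\wedge v_{2j-1}\wedge v_{2j}\neq 0$ once $r\geq 2$; your induction on $\dim V$ avoids the normal form entirely, at the price of the explicit repackaging at the end. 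The steps all check out: the decomposition $\Lambda^4 V=(e_1\wedge\Lambda^3 V')\oplus\Lambda^4 V'$ does force both summands of $\omega\wedge\omega$ to vanish separately, and the identity $(e_1+\mu u_1+\nu u_2)\wedge(c_1u_1+c_2u_2)=e_1\wedge\alpha+(\mu c_2-\nu c_1)\,u_1\wedge u_2$ is as claimed. The one point you should make explicit is the degenerate case $\beta=0$: there $u_1\wedge u_2=0$, and the inference ``$e_1\wedge\alpha\wedge u_1\wedge u_2=0$ forces $\alpha\in\langle u_1,u_2\rangle$'' fails as stated (it needs $u_1,u_2$ independent, which holds exactly when $\beta\neq 0$). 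But in that case $\omega=e_1\wedge\alpha$ is already decomposable, so a one-line remark closes this.
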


\begin{proof}[Proof of Theorem~\ref{thm:plucker} for $k=2$]
Let 
\[
\omega=\sum_{i<j}p_{ij} e_i\wedge e_j
\]
 be a bivector. According to Proposition~\ref{prop:bivect}, it is decomposable (and hence corresponds to an element of $\Gr(k,V)$) iff 
\begin{multline*}
\omega\wedge\omega = (\sum_{i<j}p_{ij}e_i\wedge e_j)\wedge(\sum_{k<\ell}p_{k\ell}e_k\wedge e_\ell)=\\=\sum_{i<j<k<\ell}\left[p_{ij}p_{k\ell}-p_{ik}p_{j\ell}+p_{i\ell}p_{jk}\right] e_i\wedge e_j\wedge e_k\wedge e_\ell=0.
\end{multline*}
This is equivalent to
\[
p_{ij}p_{k\ell}-p_{ik}p_{j\ell}+p_{i\ell}p_{jk}=0\text{ for each }1\leq i<j<k<\ell\leq n,
\]
which gives us the desired system of quadratic equations.
\end{proof}

\begin{example}\label{ex:g24} For $\Gr(2,4)$ we obtain exactly one equation:
\[
p_{12}p_{34}-p_{13}p_{24}+p_{14}p_{23}=0.
\]
This shows that $\Gr(2,4)$ is a quadratic hypersurface in $\PP^5$. (Compare this with Example~\ref{ex:g24naive}!)
\end{example}

\subsection{Schubert cells and Schubert varieties}\label{ssec:decomp} In this subsection we construct a special cellular decomposition of $\Gr(k,V)$. The cells will be formed by $k$-planes satisfying certain conditions upon dimensions of intersection with a fixed flag of subspaces in $V$. Our exposition in the next subsections mostly follows \cite{Manivel98}.

As before, we fix a basis $e_1,\dots,e_n$ of $V$. Let $V_m$ denote the subspace generated by the first $m$ basis vectors: $V_m=\langle e_1,\dots,e_m\rangle$.

Let $\lambda$ be a partition included into the rectangle $k\times (n-k)$. This means that $\lambda$ is a nonstrictly decreasing sequence of integers: $n-k\geq \lambda_1\geq\dots\geq \lambda_k\geq 0$. Such a sequence can be associated with its \emph{Young diagram}: this is a diagram formed by $k$ rows of boxes, aligned on the left, with $\lambda_i$ boxes in the $i$-th row. Sometimes we will use the notions ``partition'' and ``Young diagram'' interchangeably. For example, here is the Young diagram corresponding to the partition $(5,4,4,1)$:
\[
 \yng(5,4,4,1)
\]
\begin{xca}
 Show that there are $\binom{n}{k}$ partitions inside the rectangle $k\times (n-k)$ (including the empty partition).
\end{xca}

To each such partition $\lambda$ we associate its \emph{Schubert cell} $\Omega_\lambda$ and \emph{Schubert variety} $X_\lambda$: these are subsets of $\Gr(k,V)$ defined by the following conditions:
\[
\Omega_\lambda=\{ U\in\Gr(k,V)\mid \dim(U\cap V_j)=i \text{ iff } n-k+i-\lambda_i\leq j\leq n-k+i-\lambda_{i+1}\}.
\]
and
\[
X_\lambda=\{U\in\Gr(k,V)\mid \dim(U\cap V_{n-k+i-\lambda_i})\geq i \text{ for }1\leq i\leq k\}.
\]

\begin{example}
 $X_\varnothing=\Gr(k,V)$ and $\Omega_{k\times(n-k)}=X_{k\times(n-k)}$ is the point $V_k$.
\end{example}
 
\begin{example} Let $\lambda(p,q)$ be the complement to a $(p\times q)$-rectangle in a $k\times (n-k)$-rectangle. Then
\[
 X_{\lambda(p,q)}=\{ U\in\Gr(k,V)\mid V_k-p\subset U\subset V_{k+q}\}\cong\Gr(p,p+q)
\]
is a smaller Grassmannian.
\end{example}

\begin{remark} Each Schubert cell $\Omega_\lambda$ contains a unique point corresponding to a subspace spanned by basis vectors, namely, $U^\lambda=\{e_{n-k+1-\lambda_1},\dots,e_{n-\lambda_k}\}$. If we consider the action of the diagonal torus $T\subset \GL(n)$ on $\Gr(k,V)$ coming from the action of $T$ on basis vectors by rescaling, then $U^\lambda$ would be a unique $T$-stable point in $\Omega_\lambda$. If $B$ is the subgroup of $\GL(V)$ which stabilizes the flag $V_\bullet$, then $\Omega_\lambda$ is the orbit of $U^\lambda$ under the action of $B$, hence a $B$-homogeneous space.
\end{remark}

\begin{prop}\label{prop:schubertdecomp} For each partition $\lambda\subset k\times (n-k)$,
\begin{enumerate}
 \item $X_\lambda$ is an algebraic subvariety of $\Gr(k,V)$, and $\Omega_\lambda$ is an open dense subset of $X_\lambda$;

\item $\Omega_\lambda\cong \CC^{k(n-k)-|\lambda|}$;

\item $X_\lambda=\overline{\Omega_\lambda}=\bigsqcup_{\mu\supset\lambda}\Omega_\mu$;

\item $X_\lambda\supset X_\mu$ iff $\lambda\subset\mu$.
\end{enumerate}
\end{prop}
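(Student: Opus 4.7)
The plan is to treat the four parts in the order (2), (1), (3), (4), since the explicit parametrization of the cell makes the other statements transparent. For any partition $\nu\subset k\times(n-k)$, let me write $c_i(\nu):=n-k+i-\nu_i$; these are the jump positions of $j\mapsto\dim(U\cap V_j)$ on $\Omega_\nu$, they satisfy $c_1(\nu)<\dots<c_k(\nu)$ since $\nu$ is weakly decreasing, and $U^\nu=\langle e_{c_1(\nu)},\dots,e_{c_k(\nu)}\rangle$. Abbreviate $c_i:=c_i(\lambda)$.

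For (2), I would show that every $U\in\Omega_\lambda$ admits a unique ordered basis $u_1,\dots,u_k$ whose $k\times n$ matrix $M$ has the rightmost nonzero entry of row $i$ equal to $1$ in column $c_i$, and the entries of row $i$ in the earlier pivot columns $c_1,\dots,c_{i-1}$ equal to $0$. Existence uses that $U\cap V_{c_i}$ has dimension $i$ while $U\cap V_{c_i-1}$ has dimension $i-1$; pick a vector in the first not in the second, normalize, and reduce modulo $u_1,\dots,u_{i-1}$. The remaining $(c_i-1)-(i-1)=n-k-\lambda_i$ entries of row $i$ are free, summing to $k(n-k)-|\lambda|$ free parameters and identifying $\Omega_\lambda\cong\CC^{k(n-k)-|\lambda|}$.

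For (1), each inequality $\dim(U\cap V_{c_i})\geq i$ is equivalent to saying that the last $n-c_i$ columns of any matrix representing $U$ have rank $\leq k-i$, cut out on $\Gr(k,V)$ by the vanishing of certain minors; thus $X_\lambda$ is algebraic. Passing to (3), I would observe that every $U\in\Gr(k,V)$ belongs to a unique $\Omega_\mu$: the step function $j\mapsto\dim(U\cap V_j)$ jumps at $k$ values $p_1<\dots<p_k$, and $\mu_i:=n-k+i-p_i$ yields a partition with $U\in\Omega_\mu$. Then $U\in X_\lambda$ translates to $p_i\leq c_i$, i.e.\ $\mu_i\geq\lambda_i$, giving $X_\lambda=\bigsqcup_{\mu\supset\lambda}\Omega_\mu$; in particular $\Omega_\lambda$ is the complement in $X_\lambda$ of the closed union $\bigcup_{\mu\supsetneq\lambda}X_\mu$, settling openness in (1).

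To finish (3) by establishing $X_\lambda=\overline{\Omega_\lambda}$, I would invoke the Borel subgroup $B\subset\GL(V)$ stabilizing the flag $V_\bullet$: by the remark preceding the proposition each $\Omega_\nu$ is the $B$-orbit of $U^\nu$, so it suffices to show $U^\mu\in\overline{\Omega_\lambda}$ for every $\mu\supset\lambda$. Setting $d_i:=c_i(\mu)\leq c_i$, consider the family $U(t)$ whose $i$-th basis row is $t\,e_{d_i}+(1-t)\,e_{c_i}$: the rightmost nonzero of row $i$ sits at $c_i$, and entries in columns $c_j$ for $j>i$ vanish since $d_i,c_i\leq c_i<c_j$, so $U(t)\in\Omega_\lambda$ for $t\in[0,1)$, while $U(1)=\langle e_{d_1},\dots,e_{d_k}\rangle=U^\mu$. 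This places $U^\mu$ in $\overline{\Omega_\lambda}$, and $B$-invariance of the closure gives $\Omega_\mu=B\cdot U^\mu\subset\overline{\Omega_\lambda}$. I expect this degeneration (and the verification that the pivot structure is preserved along the family) to be the main technical point. Part (4) is then immediate from (3): $X_\lambda\supset X_\mu$ unpacks as $\{\nu:\nu\supset\mu\}\subset\{\nu:\nu\supset\lambda\}$, which holds iff $\lambda\subset\mu$.
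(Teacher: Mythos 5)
Your proof is correct and follows essentially the same route as the paper: jump positions of $j\mapsto\dim(U\cap V_j)$, echelon-form parametrization of $\Omega_\lambda$, rank conditions for $X_\lambda$, and the cell decomposition $X_\lambda=\bigsqcup_{\mu\supset\lambda}\Omega_\mu$. The one place where you go beyond the paper is the closure statement: the paper asserts rather informally that ``for each $\mu\supset\lambda$ we can form a sequence of elements from $\Omega_\lambda$ whose limit belongs to $\Omega_\mu$'' without writing the degeneration down, while you cleanly reduce to the single torus-fixed point $U^\mu$ via $B$-invariance of $\overline{\Omega_\lambda}$ and then exhibit the explicit one-parameter family $u_i(t)=t\,e_{d_i}+(1-t)\,e_{c_i}$. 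Both observations (that $\Omega_\nu=B\cdot U^\nu$, which the paper records in the remark before the proposition, and that the rightmost-nonzero positions of the rows of $U(t)$ stay at $c_1<\dots<c_k$ for $t\neq 1$, so the jump pattern is preserved) are sound, and the pivot/echelon argument you sketch does confirm $U(t)\in\Omega_\lambda$. Your observation that openness of $\Omega_\lambda$ in $X_\lambda$ falls out of the identity $\Omega_\lambda=X_\lambda\setminus\bigcup_{\mu\supsetneq\lambda}X_\mu$ is also a tidy way to close part (1) without further work.
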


\begin{proof}
 First, let us check that $X_\lambda$ is an algebraic subvariety. Indeed, the condition $\dim U\cap V_i\geq j$ can be replaced by an equivalent condition: for $U\subset V\cong \CC^n$, the rank of the map $U\to V/V_i$ is less than or equal to $n-k-j$. This is an algebraic condition, since it is given by vanishing of all minors of order $n-k-j+1$ of the corresponding matrix. The variety $X_\lambda$ is defined by such conditions, so it is algebraic.

For an arbitrary $U\in\Gr(k,V)$, the sequence of dimensions of $U\cap V_i$ goes from 0 to $k$, increasing on each step by at most one. This means that it jumps exactly in $k$ positions; we denote them by $n-k+i-\mu_i$, where $\mu$ is a partition included into the rectangle of size $k\times(n-k)$. This shows that
\[
 \Gr(k,V)=\bigsqcup_{\mu\subset k\times (n-k)} \Omega_\lambda.
\]
Moreover, if the dimension of $U\cap V_{n-k+i-\lambda_i}$ is not greater than $i$, this means that the first $i$ dimension jumps were on positions with numbers not greater than $n-k+i-\lambda_i$, which is greater than or equal to $n-l+i-\mu_i$. This means that
\[
 X_\lambda=\bigsqcup_{\mu\supset\lambda}\Omega_\mu.
\]

If $e_1,\dots,e_n$ is our standard basis of $V$ and if $U\in\Omega_{\lambda}$, this means that $U$ has a basis $u_1,\dots,u_k$ where
\[
 u_i=e_{n-k+i-\lambda_i}+\sum_{1\leq j\leq n-k+i-\lambda_i, j\neq n-k+\ell-\lambda_\ell,\ell\leq i } x_{ij}e_j
\]
for $1\leq i\leq k$. In other words, $U$ is spanned by the rows of the matrix
\[
\begin{pmatrix}
* &\dots & * & 1& 0 & \hdotsfor{13} & 0  \\
* &\dots & * & 0 & * &\dots & * & 1 & 0 & \hdotsfor{9} & 0 \\
* & \dots & * & 0 & * &\dots & * & 0 & * &\dots & *  & 1 &\hdotsfor{6} & 0 \\
\hdotsfor{19}\\
* & \dots & * & 0 & * & \dots & * &  0 & * & \dots & * & 0 & * &\dots & * & 1 & 0 &\dots & 0
\end{pmatrix}
\]
where $1$'s are in the columns with numbers $n-k+i-\lambda_i$, $1\leq i\leq k$. Such a matrix is uniquely determined. This defines an isomorphism between $\Omega_\lambda$ and $\CC^{k(n-k)-|\lambda|}$, where $|\lambda|$ is the number of boxes in $\lambda$, and the $x_{ij}$'s are represented by stars. More precisely, this defines a system of coordinates on $\Omega_{\lambda}$ with the origin at $U^\lambda$ (for this subspace, all $x_{ij}$'s are equal to zero).

We see that $\Omega_\lambda$ is formed by the subspaces spanned by rows of matrices of the form
\[
\begin{pmatrix}
* &\dots & * & *& 0 & \hdotsfor{13} & 0  \\
* &\dots & * & * & * &\dots & * & * & 0 & \hdotsfor{9} & 0 \\
* & \dots & * & * & * &\dots & * & * & * &\dots & *  & * &\hdotsfor{6} & 0 \\
\hdotsfor{19}\\
* & \dots & * & * & * & \dots & * &  * & * & \dots & * & * & * &\dots & * & * & 0 &\dots & 0
\end{pmatrix},
\]
where the rightmost star in each row corresponds to a nonzero element. Of course, such a matrix is not uniquely determined by $U$. From this description we conclude that if $\mu\supset\lambda$, then $\Omega_\lambda\subset\overline{\Omega_\mu}$: for each $\mu\supset\lambda$, we can form a sequence of elements from $\Omega_\lambda$ whose limit belongs to $\Omega_\mu$. This means that $\Omega_\lambda\subset X_\lambda\subset\overline{\Omega_\lambda}$, and since $X_\lambda$ is closed, $X_\lambda=\overline{\Omega_\lambda}$. The proposition is proved.

\end{proof}

\begin{remark}\label{rem:finitefield} The main tool in the proof of this proposition is the Gaussian elimination (bringing a matrix to a row-echelon form by row operations). It can be carried out over an arbitrary field $\KK$, not necessarily $\CC$. This means that a Grassmannian $\Gr(k,\KK^n)$ of $k$-planes in an $n$-space over any field has a Schubert decomposition into strata isomorphic to affine spaces over $\KK$. We will use this idea later for $\KK=\FF_q$ to compute the Poincar\'e polynomial of a Grassmannian.
\end{remark}

\begin{example} For $\Gr(2,4)$, there are 6 Schubert varieties, corresponding to 6 Young diagrams inside a $2\times 2$ rectangle. The inclusion diagram of the Schubert varieties is as follows:
\[
\xymatrix{ 
& \varnothing\ar@{-}[d] \\
& {\yng(1)}\ar@{-}[dl]\ar@{-}[dr]\\
{\yng(2)}\ar@{-}[dr] &&{\yng(1,1)}\ar@{-}[dl]\\
& {\yng(2,1)}\ar@{-}[d]\\
& {\yng(2,2)}
}
\]
\end{example}

Consider the subvariety $X_{(1)}\subset\Gr(2,4)$. The points of $\Gr(2,4)$ correspond to 2-dimensional vector subspaces in $\CC^4$. They can be viewed as \emph{projective} lines in a three-dimensional projective space $\CC\PP^3$. A subspace $U$ is inside $X_{(1)}$ iff it intersects nontrivially with a given $2$-space $V_2$. This means that $X_{(1)}$ can be viewed as the set of all projective lines in $\CC\PP^3$ intersecting with a given line (namely, the projectivization of $V_2$).

Let us return to Problem~\ref{prob:schubert}. Take four lines in general position. The set of all lines intersecting each one of them defines a three-dimensional Schubert variety for a certain flag. Denote these varieties by $X_{(1)}$, $X_{(1)}'$, $X_{(1)}''$, and $X_{(1)}'''$. Each line meeting all four given lines then corresponds to a point in $X_{(1)}\cap X_{(1)}'\cap X_{(1)}''\cap X_{(1)}'''$, and we need to find the number of points in this intersection.

This can be done as follows. We have seen in Example~\ref{ex:g24} that under the Pl\"ucker embedding the Grassmannian $\Gr(2,4)$ is a quadric in $\PP^5$. Proposition~\ref{prop:schubertdecomp} implies  that under this embedding $X_{(1)}$ is the intersection of the Grassmannian with a hyperplane $p_{34}=0$. The other three Schubert varieties are translates of $X_{(1)}$, so they are hyperplane sections as well. This means that the intersection of all four Schubert varieties is the intersection of a quadric in $\PP^5$ with four generic hyperplanes. So it consists of two points.

We have solved the problem about four lines using geometric considerations. In more complicated problems it is usually more convenient to replace geometric objects by their cohomology classes, and their intersections by cup-products of these classes. We pass to the cohomology ring of the Grassmannian in the next subsection.

\subsection{Schubert classes} In this subsection we start with recalling some basic facts on homology and cohomology of algebraic varieties. 

Let $X$ be a  nonsingular projective complex algebraic variety of dimension $n$. Then it can be viewed as a $2n$-dimensional compact differentiable manifold with a canonical orientation. This gives us a canonical generator of the group $H_{2n}(X)$: the \emph{fundamental class} $[X]$. It defines the Poincar\'e pairing between the homology and cohomology groups: $H^i(X)\to H_{2n-i}(X)$, $\alpha\mapsto \alpha\cap [X]$; it is an isomorphism for all $i$. 

For each subvariety $Y\subset X$ of dimension $m$, we can similarly define its fundamental class $[Y]\in H_{2m}(Y)$. Using the Poincar\'e duality, the image of this class in $H_{2m}(X)$ defines the fundamental class $[Y]\in H^{2d}(X)$, where $d=n-m$ is the codimension of $Y$ in $X$. This can be done even for a singular $Y$ (see \cite[Appendix~A]{Manivel98} for details on singular (co)homology). In particular, the fundamental class $[x]\in H^{2n}(X)$ of a point $x\in X$ is independent of a point and generates the group $H^{2n}(X)$.

The cohomology ring $H^*(X)$ has a product structure, usually referred to as the \emph{cup product}, but we shall denote it just by a dot.  For two classes $\alpha,\beta\in H^*(X)$, let $\langle\alpha,\beta\rangle$ denote the coefficient in front of $[x]$ in the cup product $\alpha\cdot \beta$.  This defines a symmetric bilinear form on $H^*(X)$, called the \emph{Poincar\'e duality pairing}. It is nondegenerate over $\ZZ$ if $H^*(X)$ is torsion-free.


The classes of Schubert varieties $\sigma_\lambda:=[X_\lambda]\in H^{2|\lambda|}(\Gr(k,V))$ will be called \emph{Schubert classes}.

The Schubert cells $\Omega_\lambda$ form a cellular decomposition of $\Gr(k,V)$. Moreover, they are even-dimensional; this means that all differentials between the groups of cellular cocycles are zero. This means that Proposition~\ref{prop:schubertdecomp} implies the following statement.
\begin{corollary} The cohomology ring of $\Gr(k,V)$ is freely generated as an abelian group by the Schubert cycles:
\[
 H^*(\Gr(k,V),\ZZ)=\bigoplus_{\lambda\subset k\times(n-k)} \ZZ\cdot\sigma_\lambda,
\]
where $\lambda$ varies over the set of all partitions with at most $k$ rows and at most $n-k$ columns. 
\end{corollary}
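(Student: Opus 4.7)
The plan is to use the Schubert cell decomposition from Proposition~\ref{prop:schubertdecomp} together with the general principle that a CW complex whose cells are all even-dimensional has free abelian integral cohomology, with one generator per cell. Part (2) of that proposition realizes each Schubert cell $\Omega_\lambda$ as an affine space $\CC^{k(n-k)-|\lambda|}$ of real dimension $2(k(n-k)-|\lambda|)$, while part (3) shows that the closure of every cell is a disjoint union of cells, so the Schubert stratification really is a finite CW structure on $\Gr(k,V)$.

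For that CW structure the cellular cochain group $C^j_{\mathrm{cell}}=\bigoplus_{\dim_\RR\Omega_\lambda=j}\ZZ$ vanishes whenever $j$ is odd. Hence every cellular coboundary $C^j_{\mathrm{cell}}\to C^{j+1}_{\mathrm{cell}}$ has either zero source or zero target, and $H^j(\Gr(k,V),\ZZ)=C^j_{\mathrm{cell}}$ is free abelian on the set of cells of real dimension $j$. Collecting cells by their complex codimension $|\lambda|$ yields
\[
H^{2i}(\Gr(k,V),\ZZ)\;=\;\bigoplus_{|\lambda|=i,\;\lambda\subset k\times(n-k)}\ZZ,
\]
with the summation over all partitions in the $k\times(n-k)$ rectangle, and $H^{\mathrm{odd}}=0$.

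The remaining task is to identify the cellular generator attached to $\Omega_\lambda$ with the Schubert class $\sigma_\lambda=[X_\lambda]$. Because every cell is even-dimensional, the closure $X_\lambda=\overline{\Omega_\lambda}=\bigsqcup_{\mu\supset\lambda}\Omega_\mu$ from part (3) of Proposition~\ref{prop:schubertdecomp} is a cellular subcomplex of real codimension $2|\lambda|$, so its fundamental class in Borel--Moore homology generates the free summand of $H_{2(k(n-k)-|\lambda|)}(\Gr(k,V),\ZZ)$ indexed by $\Omega_\lambda$. Applying Poincaré duality on the smooth projective manifold $\Gr(k,V)$ sends this class to $\sigma_\lambda\in H^{2|\lambda|}(\Gr(k,V),\ZZ)$, yielding the stated basis.

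The one genuinely non-trivial step is this final identification: a priori the Schubert varieties $X_\lambda$ are singular, so one needs to know that they nevertheless carry well-defined fundamental classes in cohomology and that these classes coincide with the cellular basis obtained from the CW structure. This is the standard Borel--Moore argument for cellular subcomplexes of CW complexes with only even cells, and it is the main obstacle that any fully rigorous write-up must address; everything else reduces to parity bookkeeping on the cellular cochain complex.
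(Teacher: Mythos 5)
Your argument is the same as the paper's: the paper derives the corollary in the two sentences preceding it, by observing that the Schubert cells are all even-dimensional, so every cellular differential has zero source or zero target and the cohomology is free on the cells. Your write-up simply makes explicit the one step the paper leaves implicit --- identifying the cellular generator attached to $\Omega_\lambda$ with the class $\sigma_\lambda=[X_\lambda]$ via the fundamental class of the closure and Poincar\'e duality --- and that identification is handled correctly.
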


Introduce the \emph{Poincar\'e polynomial} of $\Gr(k,V)$ as the generating function for the sequence of ranks of cohomology groups:
\[
 P_q(\Gr(k,V))=\sum_{k\geq 0} q^k\rk H^{2k}(\Gr(k,V)).
\]
Schubert decomposition allows us to compute the Poincar\'e polynomial of $\Gr(k,V)$.
\begin{corollary}\label{cor:poincarepolynomial} The Poincar\'e polynomial of $\Gr(k,V)$ equals
 \[
  P_q(\Gr(k,V))=\frac{(q^n-1)(q^n-q)\dots(q^n-q^{n-k+1})}{(q^k-1)(q^k-q)\dots(q^k-q^{k-1})}=\qbinom{n}{k}_q.
 \]
\end{corollary}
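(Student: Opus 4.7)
The plan is to express the Poincaré polynomial directly from the Schubert decomposition and then match it to the point count from Exercise~\ref{xca:finitefield}.

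First I would observe that, by Proposition~\ref{prop:schubertdecomp}(2), all Schubert cells $\Omega_\lambda$ are complex affine spaces, hence even real-dimensional, of complex dimension $k(n-k)-|\lambda|$. Since the cellular chain complex of a CW structure with cells only in even degrees has vanishing differentials, the Schubert classes $\sigma_\lambda=[X_\lambda]\in H^{2|\lambda|}(\Gr(k,V))$ form a $\ZZ$-basis of the cohomology (this is the Corollary just stated). Consequently
\[
\rk H^{2i}(\Gr(k,V))=\#\{\lambda\subset k\times(n-k)\mid |\lambda|=i\},
\]
and therefore
\[
P_q(\Gr(k,V))=\sum_{\lambda\subset k\times(n-k)} q^{|\lambda|}.
\]

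Next I would identify this generating function with $\qbinom{n}{k}_q$. By Remark~\ref{rem:finitefield}, the Schubert decomposition is valid over any field; over $\FF_q$ the stratum analogous to $\Omega_\lambda$ is isomorphic to $\FF_q^{k(n-k)-|\lambda|}$ and thus contains $q^{k(n-k)-|\lambda|}$ points. Summing over $\lambda$ and invoking Exercise~\ref{xca:finitefield} gives
\[
\qbinom{n}{k}_q=\#\Gr(k,\FF_q^n)=\sum_{\lambda\subset k\times(n-k)}q^{k(n-k)-|\lambda|}.
\]

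Finally I would use the involution $\lambda\mapsto\bar\lambda$ sending a partition inside the $k\times(n-k)$ rectangle to its complementary partition (rotate the Young diagram $180^\circ$ within the rectangle). This is a bijection on the index set satisfying $|\bar\lambda|=k(n-k)-|\lambda|$, so the right-hand side above equals $\sum_\lambda q^{|\lambda|}=P_q(\Gr(k,V))$, completing the proof.

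There is no real obstacle here: the only substantive input beyond the preceding corollary is the fact that one may run the Schubert decomposition argument over $\FF_q$ in perfect parallel with the argument over $\CC$, which is precisely what Remark~\ref{rem:finitefield} records. The complementation bijection is the combinatorial glue that converts the point count into the Poincaré polynomial.
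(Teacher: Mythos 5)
Your proof is correct and follows essentially the same route as the paper: count $\FF_q$-points of the Grassmannian in two ways, via Exercise~\ref{xca:finitefield} and via the Schubert decomposition of Remark~\ref{rem:finitefield}. You are in fact slightly more careful than the paper, which silently identifies the number of $m$-dimensional cells with $\rk H^{2m}$, whereas you make the needed complementation involution $\lambda\mapsto\bar\lambda$ explicit; the only small thing left implicit on your side (and the paper's) is that the resulting identity, verified for all prime powers $q$, is an equality of polynomials and hence holds identically.
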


\begin{proof}
 Let $q=p^k$ be a power of a prime. In Exercise~\ref{xca:finitefield} we have shown that the Grassmannian $\Gr(k,\FF_q^n)$ consists of $\qbinom{n}{k}_q$ points. The same number can also be computed in a different way: as it was observed in Remark~\ref{rem:finitefield}, $\Gr(k,\FF_q^n)$ is a disjoint union of Schubert cells, each of them being isomorphic to $\FF_q^m$, where $m$ is the dimension of a Schubert cell. This means that 
all $m$-dimensional cells consist of $\rk H^{2m}(\Gr(k,V))\cdot q^m$ points, and the total number of points of the Grassmannian is nothing but the value of the Poincar\'e polynomial at $q$.
\end{proof}

\subsection{Transversality and Kleiman's theorem}\label{ssec:kleiman} Let $Y$ and $Z$ be two irreducible subvarieties of $X$ of codimensions $d$ and $d'$ respectively. The intersection of $Y$ and $Z$ is the union of several irreducible components $C_i$:
\[
 Y\cap Z=\bigcup C_i,
\]
Each of these components satisfies $\codim C_i\leq d+d'$. We shall say that $Y$ and $Z$ \emph{meet properly in $X$} if for each irreducible component of their intersection has the expected codimension: $\codim C_i=\codim Y+\codim Z$.

If $Y$ and $Z$ meet properly in $X$, then in $H^*(X)$ we have
\[
[Y]\cdot [Z]=\sum m_i [C_i],
\]
where the sum is taken over all irreducible components of the intersection, and $m_i$ is the \emph{intersection multiplicity of $Y$ and $Z$ along $C_i$}, a positive integer.  Further, this number is equal to 1 if and only if $Y$ and $Z$ \emph{intersect transversally along $C_i$}, i.e.,  a generic point $x\in C_i$ is a smooth point of $C_i$, $Y$, and $Z$ such that the tangent space to $C_i$ equals the intersection of the tangent spaces to $Y$ and $Z$:
\[
 T_x C_i= T_x Y\cap T_x Z\subset T_x X.
\]
So, if the intersection of $Y$ and $Z$ is  transversal along each component, the product of the classes $[Y]$ and $[Y']$ equals the sum of classes of the components $C_i$:
\[
 [Y]\cdot [Z]=\sum [C_i]\in H^{2d+2d'}(X).
\]

In particular, if $Y$ and $Z$ have complementary dimensions: $\dim Y+\dim Z=\dim X$, then $Y$ meets $Z$ properly iff their intersection is finite. In case of transversal intersection, this means that the Poincar\'e pairing of $[Y]$ and $[Z]$ equals the number of points in the intersection:
\[
\langle [Y],[Z]\rangle=\#(Y\cap Z).
\]

\begin{theorem}[Kleiman~\cite{Kleiman74}; cf. also~{\cite[Theorem~III.10.8]{Hartshorne77}}]\label{thm:kleiman} Let  $X$ be a homogeneous variety with respect to an algebraic group $G$.
Let  $Y$, $Z$ be subvarieties of $X$, and let $Y_0\subset Y$ and $Z_0\subset Z$ be nonempty open subsets consisting of nonsingular points. Then there exists a nonempty open subset $G_0\subset G$ such that for any $g\in\Omega$, $Y$ meets $gZ$ properly, and $Y_0\cap gZ_0$ is nonsingular and dense in $Y\cap gZ$. Thus, $[Y]\cdot [Z]=[Y\cap gZ]$ for all $g\in G_0$. 

In particular, if $\dim X=\dim Y+\dim Z$, then $Y$ and $gZ$ meet transversally for all $g\in G_0$, where $G_0\subset G$ is a nonempty open set. Thus, $Y\cap gZ$ is finite, and $\langle [Y],[Z]\rangle=\#(Y\cap gZ)$ for general $g\in G$.
\end{theorem}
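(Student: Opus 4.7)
The plan is to pass to an incidence variety and reduce the theorem to generic smoothness. Consider the action morphism $\mu \colon G \times Z \to X$, $(g,z) \mapsto gz$, together with the incidence variety $W = \mu^{-1}(Y)$ and its open subset $W_0 = \mu^{-1}(Y_0) \cap (G \times Z_0)$. The crucial identification is that the fiber of the first projection $p \colon W \to G$ over $g$ maps bijectively to $Y \cap gZ$ via $(g,z) \mapsto gz$, and likewise $p_0 \colon W_0 \to G$ has fiber over $g$ identified with $Y_0 \cap gZ_0$. In this way the desired transversality on $X$ is converted into a statement about smoothness of fibers of $p_0$.

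The first step is to argue that $\mu$ is smooth on $G \times Z_0$. This is the one place where transitivity of the $G$-action is used: transitivity forces the infinitesimal action $\mathfrak{g} \to T_x X$ to be surjective at every $x \in X$ (we use characteristic zero here), so $d\mu_{(g,z)}$ is already surjective when restricted to $T_g G$. Since source and target are smooth at points of $G \times Z_0$ and the relative dimension of $\mu$ is constant, $\mu|_{G \times Z_0}$ is smooth. Pulling back the smooth subvariety $Y_0$ then shows that $W_0$ is smooth of pure codimension $\codim_X Y$ in $G \times Z_0$, and in particular $W_0$ is dense in $W$.

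Now apply generic smoothness to $p_0 \colon W_0 \to G$. Since we are in characteristic zero, there is a dense open $G_0 \subset G$ such that for every $g \in G_0$ the fiber $p_0^{-1}(g)$ is smooth of the expected dimension $\dim Y + \dim Z - \dim X$. Translating through the identification above, $Y_0 \cap gZ_0$ is smooth of this dimension for every $g \in G_0$; smoothness of a fiber of a smooth morphism is exactly the statement that $Y_0$ meets $gZ_0$ transversally in $X$. Properness of the whole intersection $Y \cap gZ$ and density of $Y_0 \cap gZ_0$ in $Y \cap gZ$ are obtained by running the same argument on the finitely many locally closed strata obtained from any Whitney-type stratification of $Y \setminus Y_0$ and $Z \setminus Z_0$ into smooth pieces of strictly smaller dimension: each stratum contributes an incidence variety whose generic fibers have strictly smaller dimension, so after shrinking $G_0$ these strata cannot produce components of $Y \cap gZ$ of the wrong codimension. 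The final cardinality formula $\langle[Y],[Z]\rangle = \#(Y \cap gZ)$ in the case $\dim Y + \dim Z = \dim X$ is then immediate from transversality and the definition of the Poincar\'e pairing.

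The main obstacle is the smoothness of $\mu$ at smooth points of $G \times Z$: transitivity alone only obviously yields dominance, and upgrading this to smoothness genuinely relies on the characteristic-zero hypothesis (or, equivalently, separability of the orbit map). Once that point is cleared, the rest of the argument is a routine combination of generic smoothness with a stratification used to control the singular loci of $Y$ and $Z$.
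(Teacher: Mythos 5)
The paper does not actually prove this theorem; it states it with a citation to Kleiman~\cite{Kleiman74} and to Hartshorne, Theorem III.10.8, and then uses it as a black box. Your argument is, in essence, exactly the proof given in the cited Hartshorne reference: pass to the action morphism $\mu\colon G\times Z\to X$, observe that on the smooth locus $\mu$ has everywhere surjective differential (transitivity plus characteristic zero, via the orbit map being separable, so the infinitesimal action $\mathfrak g\to T_xX$ is onto), pull back $Y_0$ to get a smooth incidence scheme $W_0$, and apply generic smoothness to the projection $W_0\to G$. That correctly yields the open $G_0$ over which $Y_0\cap gZ_0$ is smooth of the expected dimension, hence a transversal intersection. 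So the proposal is correct and takes the same route as the reference the paper points to.

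Two spots are stated a bit loosely and would benefit from one more line each. First, the claim that $\mu$ has constant relative dimension follows from $G$-equivariance of $\mu$ for the $G$-action $g'\cdot(g,z)=(g'g,z)$ on the source and the transitive action on the target, so all fibers are translates of one another; you invoke this implicitly but do not say why. Second, the stratification argument at the end needs the standard lower bound on intersection dimension: in a smooth ambient $X$, every irreducible component of $Y\cap gZ$ has codimension at most $\codim Y + \codim Z$. Shrinking $G_0$ so that, for every pair of strata $(Y_i,Z_j)\neq (Y_0,Z_0)$, the set $Y_i\cap gZ_j$ has dimension at most $\dim Y_i+\dim Z_j-\dim X < \dim Y+\dim Z-\dim X$, it follows that no component of $Y\cap gZ$ can be contained in the union of the bad strata (it would then be too small), so every component meets the open set $Y_0\cap gZ_0$, giving both properness and density. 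This also shows the generic point of each component is a point of transversal intersection, so all intersection multiplicities are $1$, which is exactly what the class equality $[Y]\cdot[Z]=[Y\cap gZ]$ requires. Finally, ``bijectively'' should read ``isomorphically as schemes'' when you identify the fiber of $p$ over $g$ with $Y\cap gZ$, since it is the scheme structure that carries the smoothness information you need. These are polish rather than gaps; the argument is sound.
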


\subsection{The Poincar\'e duality} Let us recall the notation from Subsection~\ref{ssec:decomp}. Let $e_1,\dots,e_n$ be a basis of $V$; as before, we fix a complete flag $V_1\subset V_2\subset\dots\subset V_n=V$, where $V_i=\langle e_1,\dots,e_i\rangle$. We also consider an \emph{opposite flag} $V_1'\subset V_2'\subset\dots\subset V_n'=V$, defined as follows: $V_i'=\langle e_{n-i+1},\dots,e_n\rangle$. To each of these flags we can associate a Schubert decomposition of the Grassmannian $\Gr(k,V)$; denote the corresponding Schubert varieties by $X_\lambda$ and $X'_\lambda$ respectively. We will refer to the latter as to an \emph{opposite Schubert variety}. Since the group $\GL(V)$ acts transitively on the set of complete flags, the class $\sigma_\lambda=[X_\lambda]=[X_\lambda']$ depends only on the partition $\lambda$ and does not depend on the choice of a particular flag.

We have seen in~\ref{ssec:decomp} that if $U\in \Omega_\lambda$, then it admits a unique basis $u_1,\dots,u_k$ such that the coefficients of decomposition of $u_i$'s with respect to the basis $e_1,\dots,e_n$ form a matrix 
\[
\begin{pmatrix}
* &\dots & * & 1& 0 & \hdotsfor{13} & 0  \\
* &\dots & * & 0 & * &\dots & * & 1 & 0 & \hdotsfor{9} & 0 \\
* & \dots & * & 0 & * &\dots & * & 0 & * &\dots & *  & 1 &\hdotsfor{6} & 0 \\
\hdotsfor{19}\\
* & \dots & * & 0 & * & \dots & * &  0 & * & \dots & * & 0 & * &\dots & * & 1 & 0 &\dots & 0
\end{pmatrix}
\]
where the $1$ in the $i$'th line occurs in the column number $n-k+i-\lambda_{i}$.

Let $\mu$ be another partition. Consider a subspace $W\in \Omega_\mu'$ from the Schubert cell corresponding to $\mu$ and the flag $V_\bullet'$. A similar reasoning shows that such a subspace is spanned by the rows of a matrix
\[
\begin{pmatrix}
0 & \dots & 0 & 1 & * & \dots & * &  0 & * & \dots & * & 0 & * &\dots & * & 0 & * &\dots & *\\
\hdotsfor{19}\\
0 &\hdotsfor{6}  0 & 1& * &\dots & * & 0 &* &\dots & * & 0 & * &\dots & *\\
0 &\hdotsfor{9} & 0 & 1& * &\dots & * & 0 & * &\dots & *\\
0 & \hdotsfor{13} & 0 & 1 & * &\dots & *\\
\end{pmatrix}
\]
where the $1$ in the $i$'th line is in the column $\mu_{k+1-i}+i$.

Suppose that a $k$-space $U$ belongs to the intersection $\Omega_\lambda\cap \Omega_\mu'$. This means that it admits two bases of such a form simultaneously. In particular, this means that for each $i$ the leftmost nonzero entry in the $i$-th line of the first matrix non-strictly precedes the rightmost nonzero entry in the $i$-th row of the second matrix, which means that $\mu_{k+1-i}+i\leq n-k+i-\lambda_i$, or, equivalently, $\mu_{k+1-i}+\lambda_i\leq n-k$. This means that if $\Omega_\lambda\cap \Omega_\mu'\neq\varnothing$, then the diagram $\mu$ is contained in the \emph{complement} $\widehat\lambda$ to the diagram $\lambda$.

Denote by $\delta_{\mu,\widehat\lambda}$ the Kronecker symbol, which is equal to 1 if $\mu=\widehat\lambda$ and to 0 otherwise.

\begin{prop}\label{prop:duality}
 Let $\lambda$ and $\mu$ be two partitions contained in the rectangle of size $k\times (n-k)$, and let $|\lambda|+|\mu|=k(n-k)$. Then
\[
 \sigma_\lambda\cdot\sigma_\mu=\delta_{\mu,\widehat\lambda}.
\]
\end{prop}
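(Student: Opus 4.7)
The plan is to compute the pairing $\sigma_\lambda\cdot\sigma_\mu\in H^{2k(n-k)}(\Gr(k,V))\cong\ZZ$ geometrically, by counting points in a transverse intersection. Since $\GL(V)$ acts transitively on complete flags, we have $[X_\lambda]=[X'_\lambda]=\sigma_\lambda$, so it suffices to intersect $X_\lambda$ against the opposite Schubert variety $X'_\mu$ (possibly after translating by a generic $g\in\GL(V)$). Because $|\lambda|+|\mu|=k(n-k)=\dim\Gr(k,V)$, Kleiman's theorem (Theorem~\ref{thm:kleiman}) guarantees that for general $g$ the intersection $X_\lambda\cap gX'_\mu$ is finite and transverse, and $\langle\sigma_\lambda,\sigma_\mu\rangle=\#(X_\lambda\cap gX'_\mu)$.

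First I would analyze the untranslated intersection $X_\lambda\cap X'_\mu$. By Proposition~\ref{prop:schubertdecomp}(3) it decomposes as
$\bigsqcup_{\lambda'\supseteq\lambda,\ \mu'\supseteq\mu}\bigl(\Omega_{\lambda'}\cap\Omega'_{\mu'}\bigr)$.
The matrix normal-form analysis performed just before the statement shows that $\Omega_{\lambda'}\cap\Omega'_{\mu'}\neq\varnothing$ forces $\mu'_{k+1-i}+\lambda'_i\le n-k$ for all $i$, i.e.\ $\mu'\subseteq\widehat{\lambda'}$, which in turn gives $|\lambda'|+|\mu'|\le k(n-k)$. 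Combined with $|\lambda'|\ge|\lambda|$, $|\mu'|\ge|\mu|$ and the hypothesis $|\lambda|+|\mu|=k(n-k)$, equality must hold throughout, so $\lambda'=\lambda$, $\mu'=\mu$, and necessarily $\mu=\widehat\lambda$. Hence if $\mu\neq\widehat\lambda$ then $X_\lambda\cap X'_\mu=\varnothing$; emptiness is open on the $\GL(V)$-translate, so Kleiman gives $\sigma_\lambda\cdot\sigma_\mu=0$.

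If $\mu=\widehat\lambda$, the same analysis reduces the intersection to $\Omega_\lambda\cap\Omega'_{\widehat\lambda}$. A direct calculation with the pivot columns $n-k+i-\lambda_i$ (for the $V_\bullet$-echelon form) and $\widehat\lambda_{k+1-i}+i=(n-k-\lambda_i)+i$ (for the $V'_\bullet$-echelon form) shows the two pivot sequences coincide, which forces all free entries of both normal forms to vanish. The set-theoretic intersection is therefore the single point $U^\lambda=\langle e_{n-k+i-\lambda_i}:1\le i\le k\rangle$.

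The main remaining obstacle is showing that this single point of intersection is reduced, i.e.\ that the intersection is transverse, so it contributes $+1$ (and not some higher multiplicity) to the pairing. I would handle this by working in the affine chart around $U^\lambda$ obtained by fixing the pivot columns $c_i=n-k+i-\lambda_i$ and letting the remaining $k(n-k)$ matrix entries $z_{ij}$ (with $j\notin\{c_1,\dots,c_k\}$) be local coordinates. In these coordinates $\Omega_\lambda$ is linearly cut out near $U^\lambda$ by vanishing of the $z_{ij}$ with $j>c_i$, and by direct inspection of the opposite normal form $\Omega'_{\widehat\lambda}$ is cut out by vanishing of the complementary coordinates, namely those with $j<c_i$. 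These two sets of coordinates partition the chart, so the tangent spaces are complementary in $T_{U^\lambda}\Gr(k,V)$; transversality at $U^\lambda$ follows, completing the proof. Alternatively, one can avoid the local computation entirely by replacing $X'_{\widehat\lambda}$ with a generic translate $gX'_{\widehat\lambda}$, invoking Theorem~\ref{thm:kleiman} to obtain transversality, and using upper semicontinuity of fiber dimension to conclude the translated intersection remains a single reduced point.
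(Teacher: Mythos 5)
Your argument is correct and follows the same route as the paper: you use the matrix normal-form analysis preceding the proposition to show that $\Omega_\lambda\cap\Omega'_\mu\neq\varnothing$ forces $\mu\subseteq\widehat\lambda$, reduce the full intersection $X_\lambda\cap X'_\mu$ to $\Omega_\lambda\cap\Omega'_\mu$ by a dimension count, identify the single point $U^\lambda$ when $\mu=\widehat\lambda$, and conclude transversality by observing that the two cells are cut out (in the affine chart near $U^\lambda$) by complementary sets of coordinates. The concluding remark about invoking Kleiman on a generic translate is an unnecessary (and slightly loosely worded) alternative; the coordinate computation already gives transversality of the actual intersection, which suffices.
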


\begin{proof} According to the previous discussion, if $|\lambda|+|\mu|=k(n-k)$, then $X_\lambda\cap X_\mu'=\Omega_\lambda\cap \Omega_\mu'$. Indeed, from the inclusion relations on Schubert varieties we conclude that if there were a point $U\in X_\lambda\setminus \Omega_\lambda$, $U\in X_\mu'$,  this would mean that $\Omega_\lambda'\cap \Omega_\mu'\neq \varnothing$ for some $\lambda'\subsetneq\lambda$ and $\mu'\subseteq \mu$, which is nonsense, because $|\lambda'|+|\mu'|<k(n-k)$.

If the dimensions of $X_\lambda$ and $X_\mu'$ add up to $k(n-k)$, the intersection is nonzero only if the diagrams $\lambda$ and $\mu$ are complementary. In this case the intersection $\Omega_\lambda\cap \Omega_\mu'$ is easy to describe: it is a unique point $U^\lambda=\langle e_{n-k+1-\lambda_1},\dots,e_{n-\lambda_k}\rangle$. It is also clear that this intersection is transversal, because in the natural coordinates in the neighborhood of this point the tangent spaces to $\Omega_\lambda$ and $\Omega_{\mu}'$ are coordinate subspaces spanned by two disjoint sets of coordinates.
\end{proof}

\subsection{Littlewood--Richardson coefficients} In the previous subsection we were studying the intersection of two Schubert varieties  $X_\lambda$ and $X_\mu'$ of complementary dimension. Kleiman's transversality theorem shows what happens if the dimensions of $X_\lambda$ and $X_\mu'$ are arbitrary.

First let us find out when such an intersection is nonempty. This can be done by essentialy the same argument as in the proof of Proposition~\ref{prop:duality}, so we leave it as an exercise to the reader.

\begin{xca} Show that the intersection $X_\lambda\cap X_\mu'$ is nonempty iff $\lambda\subseteq \widehat\mu$.
\end{xca}

Kleiman's transversality theorem implies that the intersection  $X_\lambda\cap X_\mu'$ is proper. Indeed, it states that there exists a nonempty open set $G_0\subset\GL(V)$ such that $X_\lambda$ intersects  $g X_\mu'$ properly for all $g\in G_0$. 

Further, a classical fact from linear algebra states that a generic element $g\in \GL(V)$ can be presented as $g=b\cdot b'$, where $b$ and $b'$ are given by an upper-triangular and lower-triangular matrices respectively (this is sometimes called LU-decomposition, but essentially this is nothing but Gaussian elimination). This means that there exists an element $g\in G_0$ also admitting such a decomposition.

The elements $b$ and $b'$ stabilize the flags $V_\bullet$ and $V_\bullet'$; so the varieties $X_\lambda$ and $X_\mu'$ are also $b$- and $b'$-invariant. This means that $X_\lambda$ intersects $bb'X_\mu'=b X_\mu'$ properly. Shifting both varieties by $b^{-1}$, we obtain the desired result.

In fact, a stronger result holds; see \cite{BrionLakshmibai03} for details.

\begin{prop} The intersection $X_\lambda^\mu:=X_\lambda\cap X_\mu'$, if nonempty, is an irreducible variety, called a \emph{Richardson variety}. Its codimension is given by $\codim X_\lambda^\mu=|\widehat \mu|-|\lambda|$.
\end{prop}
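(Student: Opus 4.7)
The plan is to realize $X_\lambda^\mu$ as the closure of the open Richardson variety $\Omega_\lambda \cap \Omega'_\mu$, and to show this open subset is irreducible of the expected dimension.

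For the dimension statement, I would invoke Kleiman's theorem (Theorem~\ref{thm:kleiman}) together with the LU-decomposition argument given just before the proposition: these together imply that $X_\lambda$ meets $X'_\mu$ properly in $\Gr(k,V)$. Consequently every irreducible component of $X_\lambda^\mu$ has codimension exactly $|\lambda|+|\mu|$ in the Grassmannian, i.e.\ dimension
\[
k(n-k)-|\lambda|-|\mu|\;=\;|\widehat\mu|-|\lambda|.
\]
(I read the stated ``$\codim$'' as ``$\dim$'', since $|\widehat\mu|-|\lambda|$ is precisely the expected dimension of a proper intersection.)

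Next I would check that $\Omega_\lambda\cap \Omega'_\mu$ is open and dense in $X_\lambda^\mu$. Openness is immediate from Proposition~\ref{prop:schubertdecomp}, which says $\Omega_\lambda$ and $\Omega'_\mu$ are open in $X_\lambda$ and $X'_\mu$ respectively. Density follows from the stratification
\[
X_\lambda^\mu \;=\; \bigsqcup_{\lambda'\supseteq\lambda,\;\mu'\supseteq\mu} \Omega_{\lambda'}\cap \Omega'_{\mu'}
\]
combined with the same proper-intersection argument applied to each smaller piece: any stratum with $\lambda'\supsetneq\lambda$ or $\mu'\supsetneq\mu$ has dimension at most $k(n-k)-|\lambda'|-|\mu'|<|\widehat\mu|-|\lambda|$, hence cannot be open in any top-dimensional component of $X_\lambda^\mu$.

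The hard part is the irreducibility of the open Richardson variety $\Omega_\lambda\cap \Omega'_\mu$. The idea is to exhibit an explicit parametrization. A point $U\in\Omega_\lambda$ has a unique basis in ``left'' reduced row-echelon form with pivots in columns $n-k+i-\lambda_i$, giving $\Omega_\lambda\cong\CC^{k(n-k)-|\lambda|}$. The condition that $U$ simultaneously lies in $\Omega'_\mu$ cuts out inside this affine space a locally closed subvariety characterized by the requirement that the same matrix admit a ``right'' reduced form with pivots in columns $\mu_{k+1-i}+i$. A careful bookkeeping of which of the free left-echelon entries are forced to vanish, which remain free, and which must be nonzero — the technical heart of \cite{BrionLakshmibai03} — shows that this locally closed subvariety is irreducible (in fact its closure $X_\lambda^\mu$ is normal and Cohen--Macaulay). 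Once irreducibility of $\Omega_\lambda\cap \Omega'_\mu$ is established, $X_\lambda^\mu=\overline{\Omega_\lambda\cap \Omega'_\mu}$ is irreducible of the claimed dimension, completing the proof.
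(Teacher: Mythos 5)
The paper does not actually prove this proposition: it states it and refers to \cite{BrionLakshmibai03}, after having established properness of the intersection via Kleiman's theorem plus the LU-decomposition trick in the preceding paragraph. Your treatment of the dimension statement is exactly that argument, correctly deployed: properness forces every component of $X_\lambda^\mu$ to have codimension $|\lambda|+|\mu|$, i.e.\ dimension $k(n-k)-|\lambda|-|\mu|=|\widehat\mu|-|\lambda|$, and you are right that the ``$\codim$'' in the statement should be read as ``$\dim$'' (this is a typo in the paper; $|\widehat\mu|-|\lambda|$ is the dimension, not the codimension). Your density argument via the stratification of $X_\lambda^\mu$ by the cells $\Omega_{\lambda'}\cap\Omega'_{\mu'}$ with $\lambda'\supseteq\lambda$, $\mu'\supseteq\mu$, combined with the same properness bound applied to each larger pair, is also sound and shows $X_\lambda^\mu=\overline{\Omega_\lambda\cap\Omega'_\mu}$. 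The one place your write-up falls short of a proof is the same place the paper punts: irreducibility of the open Richardson variety $\Omega_\lambda\cap\Omega'_\mu$. Saying that ``a careful bookkeeping of which entries vanish, remain free, or must be nonzero shows irreducibility'' is not an argument --- an intersection of two irreducible affine cells cut out by rank conditions can perfectly well be reducible, and the actual proofs in the literature (Richardson's original argument, or Brion--Lakshmibai) use genuinely different inputs, e.g.\ the transitive action of $T$-stabilizers / Frobenius splitting / normality of Schubert varieties, or a direct fibration argument. So your proposal is a correct and somewhat more detailed account of everything the paper itself establishes, but like the paper it ultimately outsources the irreducibility claim to the cited reference rather than proving it.
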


So in the cohomology ring $H^*(\Gr(k,V))$ we have $\sigma_\lambda\cdot\sigma_\mu=[X_\lambda]\cdot [X_\mu']=[X_\lambda^\mu]$. Together with the Poincar\'e duality (Proposition~\ref{prop:duality}) and Kleiman's transversality this implies the following theorem.

\begin{theorem}\label{thm:LR}
\begin{enumerate} 
\item For any subvariety $Z\subset \Gr(k,V)$, we have 
\[
[Z]=\sum a_\lambda\sigma_\lambda,
\]
where $a_\lambda=\langle [Z], \sigma_{\widehat\lambda}\rangle=\#(Z\cap g X_{\widehat\lambda})$ for general $g\in \GL(V)$. In particular, the coefficients $a_\lambda$ are nonnegative.

\item Let the coefficients $c_{\lambda\mu}^\nu$ be the structure constants of the ring $H^*(\Gr(k,V)$, defined by
\[
 \sigma_\lambda\cdot\sigma_\mu=\sum_\nu c^\nu_{\lambda\mu}\sigma_\nu.
\]
Then $c_{\lambda\mu}^\nu$ are nonnegative integers.
\end{enumerate}
\end{theorem}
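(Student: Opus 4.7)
The plan is to deduce both parts by combining Proposition~\ref{prop:duality}, which identifies $\{\sigma_{\widehat\lambda}\}$ as the Poincar\'e-dual basis to $\{\sigma_\lambda\}$, with Kleiman's transversality theorem (Theorem~\ref{thm:kleiman}), which lets us realize intersection numbers as honest counts of geometric points.

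For part (1), I would first reduce to the case when $Z$ is irreducible of codimension $d$; for a general subvariety one decomposes into its top-dimensional components and applies the argument to each. The class $[Z]\in H^{2d}(\Gr(k,V))$ has a unique expansion $[Z]=\sum_{|\lambda|=d}a_\lambda\sigma_\lambda$ in the Schubert basis provided by the corollary following Proposition~\ref{prop:schubertdecomp}. Pairing both sides with $\sigma_{\widehat\mu}$ for $|\mu|=d$ and invoking Proposition~\ref{prop:duality}, which gives $\langle\sigma_\lambda,\sigma_{\widehat\mu}\rangle=\delta_{\lambda,\mu}$, isolates $a_\mu=\langle [Z],\sigma_{\widehat\mu}\rangle$. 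Since $\dim Z+\dim X_{\widehat\mu}=k(n-k)$, Theorem~\ref{thm:kleiman} applied to $Z$ and $X_{\widehat\mu}$ (more precisely to their smooth loci) produces a nonempty open $G_0\subset\GL(V)$ such that for every $g\in G_0$ the intersection $Z\cap gX_{\widehat\mu}$ is finite, transverse, and contained in the smooth loci of both varieties. Hence $a_\mu$ equals the number of points in this intersection, which is a nonnegative integer.

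For part (2), I would apply (1) to the Richardson variety $Z=X_\lambda\cap gX_\mu'$. By Theorem~\ref{thm:kleiman} the intersection is proper for generic $g$, and because the cohomology class of a translate $gX_\mu'$ coincides with $\sigma_\mu$ (as $\GL(V)$ is connected), transversality yields $\sigma_\lambda\cdot\sigma_\mu=[Z]$. Then part (1) gives an expansion $[Z]=\sum_\nu c_{\lambda\mu}^\nu\sigma_\nu$ with $c_{\lambda\mu}^\nu=\langle[Z],\sigma_{\widehat\nu}\rangle\in\ZZ_{\geq0}$; equivalently and more symmetrically, a double application of Kleiman expresses $c_{\lambda\mu}^\nu$ as the cardinality of a triple intersection $X_\lambda\cap g_1X_\mu\cap g_2X_{\widehat\nu}$ for generic $g_1,g_2$, which is manifestly a nonnegative integer.

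The only step needing care, rather than a genuine obstacle, is the translation from transverse intersection in homology to a sign-free point count: this is the place where working over $\CC$ is essential, since the canonical orientation induced by the complex structure ensures that every transverse intersection point of complex subvarieties contributes with multiplicity $+1$. With this input together with Proposition~\ref{prop:duality} and Theorem~\ref{thm:kleiman}, both assertions become essentially formal.
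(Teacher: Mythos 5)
Your argument is correct and is essentially the paper's own: the paper likewise derives the theorem by pairing with the dual Schubert basis from Proposition~\ref{prop:duality}, converting the pairings to point counts via Kleiman's transversality theorem, and identifying $\sigma_\lambda\cdot\sigma_\mu$ with the class of the Richardson variety for part (2). Your added remark about the complex orientation forcing all transverse intersection multiplicities to be $+1$ matches the discussion in the paper's transversality subsection.
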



The integers $c^\nu_{\lambda\mu}$ are called \emph{the Littlewood--Richardson coefficients}. Note that they only can be nonzero if $|\lambda|+|\mu|=|\nu|$.

This result is essentially geometric. But it also leads to a very nontrivial combinatorial problem: to give these coefficients a combinatorial meaning. Such an interpretation, known as \emph{the Littlewood--Richardson rule}, was given by Littlewood and Richardson \cite{LittlewoodRichardson34} in 1934: they claimed that the number $c_{\lambda\mu}^\nu$ were equal to the number of skew semistandard Young tableaux of shape $\nu/\lambda$ and weight $\mu$ satisfying certain combinatorial conditions. However, they only managed to prove it in some simple cases. The first rigorous proof was given by M.-P.~Sch\"utzenberger more than 40 years later \cite{Schuetzenberger77}; it used combinatorial machinery developed by Schensted, Knuth and many others. 

There are other interpretations of the Littlewood--Richardson rule. Some of them imply symmetries of Littlewood--Richardson coefficients (such as symmetry in $\lambda$ and $\mu$), which are not obvious from the original description; in particular, let us mention the paper by V.~Danilov and G.~Koshevoy about \emph{massifs} \cite{DanilovKoshevoy05} and a very nice construction by Knutson, Tao and Woodward  \cite{KnutsonTaoWoodward04} interpreting the Littlewood--Richardson coefficients as the numbers of \emph{puzzles}. A good survey on puzzles can be found, for instance, in \cite{CoskunVakil09}. The Littlewood--Richardson rule was also generalized to the much more general case of complex senisimple Lie algebras by Littelmann in \cite{Littelmann94}; this interpretation involved the so-called \emph{Littelmann paths}.

We won't speak about the Littlewood--Richardson rule in general; the reader can refer to \cite{Fulton97} or to \cite{Manivel98}. The Poincar\'e duality is one of its particular cases. Further we will only deal with one more particular case, when $X_\mu$ is a so-called \emph{special Schubert variety}, corresponding to a one-row or a one-column diagram. This situation is governed by the \emph{Pieri rule}.

\subsection{Pieri rule for Schubert varieties}\label{ssec:pieri} Here is one more special case of the Littlewood--Richardson rule. Let $(m)$ be a one-line partition consisting of $m$ boxes. We will describe the rule for multiplying the class $\sigma_m$ by an arbitrary Schubert class $\sigma_\lambda$. The Schubert varieties $X_{(m)}$ corresponding to one-line partitions are usually called \emph{special Schubert varieties}.

Let us introduce some notation. Let $\lambda$ be an arbitrary partition. Denote by $\lambda\otimes m$ the set of all partitions obtained from $\lambda$ by adding $m$ boxes in such a way that no two added boxes are in the same column.

\begin{example}
 Let $\lambda=(3,2)$, $m=2$. The elements of the set $\lambda\otimes m$ are listed below. The added boxes are marked by stars.
\[
 \young(~~~**,~~)\qquad \young(~~~*,~~*)\qquad \young(~~~,~~*,*) \qquad\young(~~~*,~~,*)\qquad \young(~~~,~~,**)
\]
\end{example}

We have seen that the Schubert classes $\sigma_\lambda$ and $\sigma_{\widehat\lambda}$ are dual. That is, if $\alpha\in H^*(\Gr(k,V))$, then
\[
 x=\sum_{\lambda\subset k\times(n-k)} \langle \alpha, \sigma_{\widehat\lambda}\rangle\sigma_\lambda.
\]
\begin{theorem}[Pieri rule] Let $\lambda\subset k\times (n-k)$ be a partition, and $m\leq n-k$. Then
\[
 \sigma_\lambda\cdot\sigma_m=\sum_{\nu\in k\times(n-k), \nu\in\lambda\otimes m} \sigma_\nu.
\] 
\end{theorem}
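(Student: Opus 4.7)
The plan is to compute the coefficients in the expansion $\sigma_\lambda\cdot\sigma_{(m)}=\sum_\nu c^\nu\sigma_\nu$ by combining Poincar\'e duality with Kleiman's theorem, and then perform the resulting point-count explicitly on a Richardson variety. By Proposition~\ref{prop:duality} the Schubert basis is self-dual under $\nu\leftrightarrow\widehat\nu$, so
\[
c^\nu=\langle\sigma_\lambda\cdot\sigma_{(m)},\sigma_{\widehat\nu}\rangle=\#\bigl(X_\lambda\cap gX_{(m)}\cap hX_{\widehat\nu}\bigr)
\]
for generic $g,h\in\GL(V)$, by Theorem~\ref{thm:kleiman}. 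This vanishes unless $|\nu|=|\lambda|+m$, so the task reduces to showing the number on the right equals $1$ precisely when $\nu\in\lambda\otimes m$ (and $0$ otherwise).

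To set up the count, I would fix a complete flag $V_\bullet$ and its opposite $V'_\bullet$, realize $X_\lambda$ relative to $V_\bullet$ and $X'_{\widehat\nu}$ relative to $V'_\bullet$, and observe that a generic translate of $X_{(m)}$ has the simple form $\{U\in\Gr(k,V):U\cap L\neq 0\}$, where $L\subset V$ is a generic subspace of dimension $n-k+1-m$. The Richardson variety $R:=X_\lambda\cap X'_{\widehat\nu}$ is empty unless $\lambda\subseteq\nu$, and is otherwise irreducible of dimension $m=|\nu|-|\lambda|$. Thus $c^\nu$ equals the number of $k$-planes $U\in R$ meeting a generic $L$ nontrivially.

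For the actual count I would restrict attention to the open stratum $\Omega_\lambda\cap\Omega'_{\widehat\nu}\subset R$ and invoke the normal form from the proof of Proposition~\ref{prop:schubertdecomp}: every point of this stratum is uniquely presented by a row-reduced matrix whose leading pivots are encoded by $\lambda$ (with respect to $V_\bullet$) and whose trailing zeros are encoded by $\widehat\nu$ (with respect to $V'_\bullet$), the free entries being in bijection with the boxes of the skew shape $\nu/\lambda$. The condition $U\cap L\neq 0$ then becomes the vanishing of a single determinant, polynomial in these $m$ free parameters, whose monomial pattern is controlled by the column distribution of $\nu/\lambda$. The combinatorial heart of the argument, which is where Pieri's rule is born, is that this determinant cuts out a single transverse point on $R$ precisely when $\nu/\lambda$ is a \emph{horizontal strip} (no two boxes in one column), and has no solution for generic $L$ otherwise — because two boxes stacked in one column impose two independent linear conditions within a single generic constraint.

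The main obstacle is this final combinatorial/algebraic identification. A clean execution will likely proceed by induction on $k$ or on $n$: projecting $\Gr(k,V)\to\Gr(k,V/\langle e_n\rangle)$, or projecting along a generic line in $L$, should reduce Pieri on $\Gr(k,V)$ to Pieri on a smaller Grassmannian with smaller $m$, the base case being exactly the Poincar\'e duality of Proposition~\ref{prop:duality} (the extreme case $|\lambda|+m=k(n-k)$). An attractive alternative is to realize $U\cap L\neq 0$ as the degeneracy locus of a map between the tautological subbundle on $R$ and the trivial quotient $V/L$, and invoke a Porteous-style formula; however, the inductive route is better suited to the elementary style of the exposition.
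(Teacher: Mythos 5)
Your setup coincides with the paper's: both reduce, via Poincar\'e duality and Kleiman's theorem, to counting the triple intersection $X_\lambda\cap X'_{\widehat\nu}\cap X_{(m)}(L)$ for a generic subspace $L$ of dimension $n-k+1-m$, and both must show that this count is $1$ exactly when $\nu/\lambda$ is a horizontal strip and $0$ otherwise. But that dichotomy is precisely where your argument stops. First, a local inaccuracy: for $m>1$ the condition $U\cap L\neq 0$ is not ``the vanishing of a single determinant'' but the degeneracy locus of the map $U\to V/L$ from a $k$-dimensional space to a $(k+m-1)$-dimensional one, cut out by all $k\times k$ minors. More seriously, the claim that this locus meets the $m$-dimensional Richardson stratum in a single transverse point precisely for horizontal strips is labelled ``the combinatorial heart'' but never established; the heuristic that ``two boxes stacked in one column impose two independent linear conditions within a single generic constraint'' is not an argument, and the proposed induction on $k$ or $n$ (or the Porteous alternative) is announced rather than carried out. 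As written, the proposal is a correct plan with its central step missing.

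For comparison, the paper closes exactly this gap with a direct linear-algebra construction rather than matrix normal forms or induction. Writing $\mu=\widehat\nu$, it sets $A_i=V_{n-k+i-\lambda_i}$, $B_i=V'_{n+1-i-\mu_{k+1-i}}$, $C_i=A_i\cap B_i$, and shows: (i) the horizontal-strip (interlacing) condition holds iff the $C_i$ form a direct sum, i.e.\ iff $C=C_1+\dots+C_k$ has dimension $k+m$; (ii) every $U\in X_\lambda\cap X'_\mu$ is contained in $C$; (iii) if $\dim C<k+m$ a generic $L$ misses $C$ and the triple intersection is empty, whereas if $\dim C=k+m$ a generic $L$ meets $C$ in a single line $\langle u\rangle$, and decomposing $u=u_1+\dots+u_k$ with $u_i\in C_i$ forces $U=\langle u_1,\dots,u_k\rangle$, yielding exactly one transversal intersection point. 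To complete your version you would need to supply an argument of this kind; the bookkeeping of free entries in the skew shape $\nu/\lambda$ does not by itself produce it.
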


\begin{proof} It is enough to show that if $|\lambda|+|\mu|=k(n-k)-m$, then $\sigma_\lambda\sigma_\mu\sigma_m=1$ if the condition
\[
 n-k-\lambda_k\geq\mu_1\geq n-k-\lambda_{k-1}\geq\mu_2\geq\dots\geq n-k-\lambda_1\geq\mu_k
\]
holds, and $\sigma_\lambda\sigma_\mu\sigma_m=0$ otherwise. So we have a necessary condition: $\lambda_i+\mu_{n-k+1-i}\leq  n-k$ for each $i$, otherwise $\sigma_\lambda\sigma_\mu=0$. Let us set
\begin{eqnarray*}
 A_i & =&\langle e_1,\dots,e_{n-k+i-\lambda_i}\rangle =  V_{n-k+i-\lambda_i},\\
 B_i&=&\langle e_{\mu_{k+1-i}+i},\dots,e_n\rangle = V'_{n+1-i-\mu_{k+1-i}},\\
 C_i&=&\langle e_{\mu_{k+1-i}+i},\dots,e_{n-k+i-\lambda_i}\rangle = A_i\cap B_i.
\end{eqnarray*}
The above condition holds if and only if the subspaces $C_1,\dots,C_k$  form a direct sum, i.e., if their sum $C=C_1+\dots+C_k$ has dimension $k+m$. Note that $C=\cap_i (A_i+B_i)$.

If $U\in X_\lambda\cap X_\mu'$, we have $\dim(U\cap A_i)\geq i$ and $\dim (U\cap B_i)\geq k+1-i$. This means that for each $i$ we have $U\subset A_i+B_{i+1}$. Indeed, if this sum is not equal to the whole space $V$, we conclude that $A_i$ and $B_{i+1}$ form a direct sum, and so
\[
 \dim (U\cap (A_i+B_{i+1}))\geq i+(k-i)=k.
\]
So $U\subset C$.

Let $L$ be a subspace of $V$ of dimension $n-k+1-m$. Consider the associated Schubert  variety
\[
 X_m(L)=\{U\in\Gr(k,V),U\cap L\neq 0\}.
\]
If the above condition does not hold, then $\dim C\leq n-k+m-1$, and we can choose $L$ intersecting $L$ trivially. This would mean that $X_\lambda\cap X_\mu'\cap X_m(L)=\varnothing$, and $\sigma_\lambda\sigma_\mu\sigma_m=0$.

In the opposite case, if $\dim C=k+m$, the intersection of $C$ with a generic subspace of dimension $n-k+1-m$ is a line $\langle u\rangle\in C$. Let $u=u_1+\dots+u_k$, where $u_i\in C_i$ (recall that this sum is direct). All the $u_i$'s are necessarily in $U$, and they are linearly independent, so they form a basis of $U$. Thus the intersection of $X_\lambda\cap X_\mu'\cap X_m(L)$ is a point. A standard argument, similar to the one used in the proof of Proposition~\ref{prop:duality}, shows that this intersection is transversal, so $\sigma_\lambda\sigma_\mu\sigma_m=1$.
\end{proof}

\begin{example}\label{ex:schubert_pieri}
The Pieri rule allows us to solve our initial problem using Schubert calculus. As we discussed, we would like to find the 4-th power of the class $\sigma_1\in H^*(\Gr(2,4))$. Using the Pieri rule, we see that:
\[
 \sigma_1^2=\sigma_2+\sigma_{(1,1)},
\]
since one box can be added to a one-box diagram in two different ways:
\[
\young(~)\otimes 1=\left\{\young(~*),\qquad\young(~,*)\right\} 
\]
Then,
\[
 \sigma_1^3=\sigma_1(\sigma_2+\sigma_{(1,1)})=2\sigma_{(2,1)},
\]
since the two other diagrams $\young(~~*)$ and $\young(~,~,*)$ do not fit inside the $(2\times 2)$-box and thus are not counted in the Pieri rule. Finally, we multiply the result by $\sigma_1$ for the fourth time and see that
\[
 \sigma_1^4=2\sigma_{(2,2)}=2[pt].
\]
So there are exactly two lines meeting four given lines in general position.
\end{example}

We can look at the same problem in a slightly different way: if we consider the Grassmannian $\Gr(2,4)$ as a subset of $\PP^5$ defined by the Pl\"ucker embedding, the cycle $\sigma_1$ corresponds to its hyperplane section. This means that $\sigma_1^4$ equals the class of a point times the number of points in the intersection of $\Gr(2,4)$ with four generic hyperplanes, i.e., the degree of the Grassmannian (and we have already seen that $\Gr(2,4)$ is a quadric). So in the above example we have used the Pieri rule to compute the degree of $\Gr(2,4)$ embedded by Pl\"ucker. 

This can be easily generalized for the case of an arbitrary Schubert variety in an arbitrary Grassmannian. 

\subsection{Degrees of Schubert varieties}\label{ssec:degrees} In this subsection we will find the degrees of Schubert varieties and in particular of the Grassmannian under the Pl\"ucker embedding. For this first let us recall the notion of a \emph{standard Young tableau}.

\begin{definition} Let $\lambda$ be a Young diagram consisting of $m$ boxes. A \emph{standard Young tableau} of shape $\lambda$ is a (bijective) filling of the boxes of $\lambda$ by the numbers $1,\dots,m$ such that the numbers in the boxes increase by rows and by columns. We will denote the set of standard Young tableaux of shape $\lambda$ by $SYT(\lambda)$.
\end{definition}

\begin{example} Let $\lambda=(2,2)$; then there are two standard tableaux of shape $\lambda$, namely,
\[
\young(12,34)\quad\text{and}\quad\young(13,24).
\]
\end{example}


\begin{theorem} The degree of a Schubert variety $X_\lambda\subset\Gr(k,V)\subset \PP(\Lambda^k V)$ is equal to the number of standard Young tableaux of shape $\widehat\lambda$, where $\widehat{\lambda}$ is the complementary diagram to $\lambda$ in the rectangle of size $k\times(n-k)$ and $n=\dim V$.
\end{theorem}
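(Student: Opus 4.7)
The plan is to translate the statement into intersection theory in $H^*(\Gr(k,V))$, compute the resulting intersection number via iterated application of the Pieri rule, and identify the output with the count of standard Young tableaux.

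First, I would identify the hyperplane class under the Pl\"ucker embedding as $\sigma_1$: the hyperplane $\{p_{n-k+1,\dots,n}=0\}$ consists of $k$-planes $U$ with $U\cap V_{n-k}\neq 0$, which is exactly the Schubert divisor $X_{(1)}$. Setting $d=\dim X_\lambda = k(n-k)-|\lambda|=|\widehat\lambda|$, the degree of $X_\lambda$ equals the number of points in $X_\lambda\cap H_1\cap\dots\cap H_d$ for generic hyperplanes $H_i$. Kleiman's transversality theorem (Theorem~\ref{thm:kleiman}) ensures this intersection is transverse, so
\[
\deg X_\lambda = \langle \sigma_\lambda\cdot\sigma_1^d,\, \sigma_{k\times(n-k)}\rangle,
\]
i.e.\ the coefficient of the point class in $\sigma_\lambda\cdot\sigma_1^d$.

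Next I would expand this coefficient using the Pieri rule. Each multiplication by $\sigma_1$ replaces $\sigma_\mu$ by $\sum\sigma_{\mu'}$, summed over all $\mu'\subset k\times(n-k)$ obtained from $\mu$ by adding a single box. Iterating $d$ times, $\sigma_\lambda\cdot\sigma_1^d$ becomes a sum of $\sigma_{\mu^{(d)}}$ over all saturated chains $\lambda=\mu^{(0)}\subsetneq\mu^{(1)}\subsetneq\cdots\subsetneq\mu^{(d)}$ of partitions inside the rectangle. Since $d=k(n-k)-|\lambda|$, only chains ending at $\mu^{(d)}=k\times(n-k)$ can contribute to the point class, so the degree equals the number of maximal chains from $\lambda$ to the full rectangle in Young's lattice.

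Finally, label the box added at step $i$ by the integer $i$; this records each chain as a filling of the skew shape $(k\times(n-k))/\lambda$ with entries $1,\dots,d$. The partition condition at every step forces the entries to be strictly increasing along rows and columns, so these fillings are precisely the standard Young tableaux of skew shape $(k\times(n-k))/\lambda$. Rotating such a tableau by $180^\circ$ turns this skew shape into the straight shape $\widehat\lambda$, and replacing each entry $i$ by $d+1-i$ restores the increasing condition, yielding a bijection with $SYT(\widehat\lambda)$. Hence $\deg X_\lambda = |SYT(\widehat\lambda)|$. The only subtle step is pinning down that the hyperplane class is $\sigma_1$ (one can cross-check this against $\deg\Gr(2,4)=2$ in Example~\ref{ex:schubert_pieri}); the remainder is a routine combination of Pieri with a shape-rotation bijection.
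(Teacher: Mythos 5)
Your proof is correct and takes essentially the same route as the paper: identify the hyperplane class with $\sigma_1$, reduce the degree to the coefficient of the point class in $\sigma_\lambda\cdot\sigma_1^d$, and count the resulting saturated chains via iterated Pieri. You are somewhat more explicit than the paper on two points it leaves implicit (the verification that $\{p_{n-k+1,\dots,n}=0\}$ cuts out $X_{(1)}$, and the $180^\circ$-rotation bijection from skew standard tableaux of shape $(k\times(n-k))/\lambda$ to $SYT(\widehat\lambda)$), which is a welcome addition but not a different argument.
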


\begin{proof} By definition, the degree of an $m$-dimensional variety $X\subset\PP^N$ in a projective space is the number of points in the intersection of $X$ with $m$ hyperplanes in general position. 

Proposition~\ref{prop:schubertdecomp} implies that a hyperplane section of a Grassmannian under the Pl\"ucker embedding corresponds to the first special Schubert variety $X_{(1)}$, or, on the level of cohomology, to the class $\sigma_{1}$.

This means that if $\dim X_\lambda=m$ and $\deg X_\lambda=d$, then
\[
\sigma_\lambda\cdot\sigma_{1}^m=d\cdot[pt].
\]

This allows us to compute $d$ using the Pieri rule: $d$ is the number of ways to obtain a rectangle of size $k\times (n-k)$ from $\lambda$ by adding $m$ \emph{numbered} boxes, and those ways are in an obvious bijection with the standard Young tableaux of shape $\widehat{\lambda}$.
\end{proof}

The number of standard Young tableaux can be computed via the \emph{hook length formula}, due to Frame, Robinson, and Thrall. Let $s\in\lambda$ be a box of a Young diagram $\lambda$; the \emph{hook} corresponding to $s$ is the set of boxes below \emph{or} to the right of $s$, including $s$ itself. An example of a hook is shown on the figure below. Let us denote the number of boxes in the hook corresponding to $s$ by $h(s)$.
\[
\young(~~~~~~~,~~s***,~~*~,~~*,~~)
\]

\begin{theorem}[Frame--Robinson--Thrall, \cite{FrameRobinsonThrall54}] The number of standard Young tableaux of shape $\lambda$ is equal to
\[
\# SYT(\lambda)=\frac{|\lambda|!}{\prod_{s\in\lambda} h(s)},
\]
where the product in the denominator is taken over all boxes $s\in\lambda$.
\end{theorem}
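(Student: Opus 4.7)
I would prove the formula by induction on $|\lambda|$, using the probabilistic argument of Greene, Nijenhuis, and Wilf. Write $F^\lambda := |\lambda|!/\prod_{s\in\lambda} h(s)$ for the right-hand side; the goal is to show $\#SYT(\lambda) = F^\lambda$. The basic recursion for the left-hand side is immediate: in a standard Young tableau the entry $|\lambda|$ must lie in a corner (an outer box whose removal leaves a Young diagram), so
\[
\#SYT(\lambda) = \sum_{c \text{ corner of } \lambda} \#SYT(\lambda \setminus c).
\]
By the inductive hypothesis, it therefore suffices to prove the matching identity
\[
F^\lambda = \sum_{c \text{ corner}} F^{\lambda \setminus c}, \quad\text{i.e.,}\quad \sum_{c} p(c) = 1, \text{ where } p(c):=\frac{F^{\lambda \setminus c}}{F^\lambda}.
\]

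The key idea is to realize $p(c)$ as the termination probability of a random \emph{hook walk} on $\lambda$: choose a starting cell $s_0$ uniformly among the $|\lambda|$ cells; from a non-corner cell $s$, step to a uniformly chosen cell in the hook of $s$ distinct from $s$; stop upon reaching a corner. Since every step moves weakly down or to the right, the walk terminates almost surely at some corner, so the termination probabilities automatically sum to $1$. Thus the inductive step will be complete once I identify, for every corner $c$, the probability of terminating at $c$ with $p(c)$.

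This identification is the main technical step. When $c=(r,s)$ is removed from $\lambda$, the only hook lengths that change are those in row $r$ strictly left of $c$ and those in column $s$ strictly above $c$, each decreasing by exactly one; a direct calculation then yields
\[
p(c) = \frac{1}{|\lambda|}\prod_{i<r}\frac{h(i,s)}{h(i,s)-1}\prod_{j<s}\frac{h(r,j)}{h(r,j)-1}.
\]
On the probabilistic side, conditioning on the (weakly increasing) sequences of rows and columns visited by the walk and summing geometric-series-type contributions, I would show that the probability of ending at $c$ admits the same closed form. The harder of the two — and the main obstacle in the whole argument — is this probabilistic computation: one needs a clean telescoping to collapse the sum over all possible trajectories into the product above. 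Once this is in place, summing over corners $c$ yields $\sum_c p(c) = 1$, which completes the induction and proves the formula.
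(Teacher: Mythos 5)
The paper does not actually prove this theorem: it states the hook length formula and refers the reader to Manivel and Fulton for proofs, so there is no in-paper argument to compare yours against. Your chosen route is the well-known probabilistic proof of Greene, Nijenhuis, and Wilf, and its skeleton is set up correctly: the recursion $\#SYT(\lambda)=\sum_c \#SYT(\lambda\setminus c)$ is right, the reduction to $\sum_c p(c)=1$ is right, and your closed form for $p(c)=F^{\lambda\setminus c}/F^\lambda$ is correct (only the hooks in row $r$ left of $c$ and in column $s$ above $c$ drop by one, and $h(c)=1$ disappears).

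The gap is that the entire mathematical content of the theorem is concentrated in the step you defer: showing that the hook walk terminates at the corner $c=(r,s)$ with probability exactly $\frac{1}{|\lambda|}\prod_{i<r}\frac{h(i,s)}{h(i,s)-1}\prod_{j<s}\frac{h(r,j)}{h(r,j)-1}$. As written, this is asserted as something you "would show," so the proposal is an outline rather than a proof. To close it, the standard device is to expand the target product as a sum over pairs of subsets $A\subseteq\{1,\dots,r-1\}$, $B\subseteq\{1,\dots,s-1\}$ of $\prod_{i\in A}\frac{1}{h(i,s)-1}\prod_{j\in B}\frac{1}{h(r,j)-1}$, and to prove the lemma that the probability that the walk ends at $(r,s)$ with its set of visited rows equal to $A\cup\{r\}$ and visited columns equal to $B\cup\{s\}$ is exactly $\frac{1}{|\lambda|}$ times that term; this lemma goes by induction on $|A|+|B|$, using the identity $\frac{1}{h(a,b)-1}\bigl(\frac{1}{h(a,s)-1}+\frac{1}{h(r,b)-1}\bigr)=\frac{1}{(h(a,s)-1)(h(r,b)-1)}$, which holds because $h(a,b)+h(r,s)=h(a,s)+h(r,b)$ for $a\le r$, $b\le s$. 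Note also that conditioning on the full \emph{sequences} of visited cells, as you propose, is messier than conditioning on the \emph{sets} of visited rows and columns; the latter is what makes the telescoping clean. Until that lemma is supplied, the argument is incomplete.
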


This formula has several different proofs; some of them can be found in \cite[Sec.~1.4.3]{Manivel98} or \cite{Fulton97}.

As a corollary, we get the classical result due to Schubert on the degree of the Grassmannian, which we have already mentioned in the introduction (with a slightly different notation).

\begin{corollary} The degree of a Grassmannian $\Gr(k,V)\subset\PP\Lambda^k V$ under the Pl\"ucker embedding equals 
\[
\deg \Gr(k,V)=(k(n-k))!\frac{0!\cdot 1!\cdot \dots\cdot (k-1)!}{(n-k)!\cdot(n-k)!\cdot\dots\cdot  (n-1)!}
\]
\end{corollary}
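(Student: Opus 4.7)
The plan is to apply the previous theorem to the trivial partition $\lambda = \varnothing$, for which the Schubert variety $X_\varnothing$ is the entire Grassmannian $\Gr(k,V)$. Then $\widehat\lambda$ is the full rectangle of size $k \times (n-k)$, so by the preceding theorem $\deg \Gr(k,V) = \#SYT(k \times (n-k))$, and it remains to evaluate this number via the Frame--Robinson--Thrall hook length formula.

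The first concrete step is to compute the hook length of the box in position $(i,j)$ of the $k \times (n-k)$ rectangle, where $1 \le i \le k$ and $1 \le j \le n-k$. Since every row has $n-k$ boxes and every column has $k$ boxes, the hook rooted at $(i,j)$ contains $(n-k-j)$ boxes strictly to the right, $(k-i)$ boxes strictly below, plus the box itself, giving
\[
h(i,j) = (n-k-j) + (k-i) + 1 = n - i - j + 1.
\]

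Next I would evaluate the product of hook lengths over the whole rectangle. For each fixed row index $i$, as $j$ runs from $1$ to $n-k$ the quantity $n-i-j+1$ runs through the integers from $n-i$ down to $k-i+1$, so the row contribution is $(n-i)!/(k-i)!$. Multiplying over $i = 1, \dots, k$ gives
\[
\prod_{s \in k\times(n-k)} h(s) = \prod_{i=1}^{k} \frac{(n-i)!}{(k-i)!} = \frac{(n-k)! \,(n-k+1)!\cdots(n-1)!}{0!\,1!\cdots (k-1)!}.
\]

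Finally, substituting this and $|\widehat\lambda| = k(n-k)$ into the hook length formula yields the desired expression. The proof is essentially a bookkeeping exercise; the only minor subtlety is keeping track of the factorials (what the rows of the rectangle contribute vs.\ the hook-length denominator) so as not to lose an index, but there is no conceptual obstacle beyond correctly invoking the two results (degree via standard tableaux, and Frame--Robinson--Thrall) already available.
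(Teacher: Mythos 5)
Your proof is correct and follows exactly the route the paper intends (the paper in fact leaves this deduction as an exercise following the theorem on degrees of Schubert varieties and the Frame--Robinson--Thrall formula): take $\lambda=\varnothing$, so the degree is the number of standard tableaux of the full $k\times(n-k)$ rectangle, and evaluate the hook lengths $h(i,j)=n-i-j+1$. Your bookkeeping is right, and incidentally it shows that the denominator in the paper's statement should read $(n-k)!\cdot(n-k+1)!\cdots(n-1)!$ rather than repeating $(n-k)!$.
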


\begin{xca} Deduce this corollary from the hook length formula.
\end{xca}

\subsection{Schur polynomials}

In the remaining part of this section we reinterpret questions on the intersection of Schubert varieties in terms of computations in a quotient ring of the ring of symmetric polynomials. For this let us first recall some facts about symmetric and skew-symmetric polynomials.

Let $\Lambda_k=\ZZ[x_1,\dots,x_k]^{S_k}$ be the ring of symmetric polynomials. Denote by $e_m$ and $h_m$ the $m$-th \emph{elementary symmetric polynomial} and \emph{complete symmetric polynomial}, respectively:
\[
e_m=\sum_{1\leq i_1<\dots<i_m\leq k} x_{i_1}\dots x_{i_m}\qquad\text{and}\qquad
h_m=\sum_{1\leq i_1\leq \dots\leq i_m\leq k} x_{i_1}\dots x_{i_m}.
\]
In particular, $e_1=h_1=x_1+\dots+x_k$, $e_k=x_1\dots x_k$, and $e_m=0$ for $m>k$ (while all $h_m$ are nonzero).

The following theorem is well-known.
\begin{theorem}[Fundamental theorem on symmetric polynomials] Each of the sets $e_1,\dots,e_k$ and $h_1,\dots,h_k$ freely generates the ring of symmetric polynomials:
\[
\Lambda_k=\ZZ[e_1,\dots,e_k]=\ZZ[h_1,\dots,h_k].
\]
\end{theorem}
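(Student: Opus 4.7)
The plan is to establish the statement for the $e_i$'s first, by a leading-monomial argument, and then transfer it to the $h_i$'s via the classical generating function identity relating the two families.

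For the $e_i$'s, I would equip $\ZZ[x_1,\dots,x_k]$ with the lexicographic order on monomials. Given a nonzero symmetric polynomial $f\in\Lambda_k$ with leading monomial $c\,x_1^{a_1}\cdots x_k^{a_k}$, symmetry forces $a_1\geq a_2\geq\cdots\geq a_k$. A direct check shows that the product $e_1^{a_1-a_2}e_2^{a_2-a_3}\cdots e_{k-1}^{a_{k-1}-a_k}e_k^{a_k}$ has the same leading monomial with coefficient $1$, so subtracting $c$ times this product from $f$ strictly decreases the leading monomial while preserving symmetry and not increasing total degree. Since there are only finitely many monomials of a given total degree, this process terminates and expresses $f$ as an integer polynomial in $e_1,\dots,e_k$. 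Algebraic independence comes from the same book-keeping: the leading monomial of $e_1^{a_1}\cdots e_k^{a_k}$ is $x_1^{a_1+\cdots+a_k}x_2^{a_2+\cdots+a_k}\cdots x_k^{a_k}$, and the assignment $(a_1,\dots,a_k)\mapsto(a_1+\cdots+a_k,\dots,a_k)$ is injective, so distinct monomials in the $e_i$'s have distinct leading monomials, precluding any cancellation and hence any nontrivial polynomial relation.

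To pass to the $h_i$'s I would use the identity
\[
\Bigl(\sum_{m\geq 0} h_m t^m\Bigr)\cdot\Bigl(\sum_{m\geq 0}(-1)^m e_m t^m\Bigr)=1,
\]
which follows from factoring each side in $\Lambda_k[[t]]$ as $\prod_i(1-x_it)^{-1}$ and $\prod_i(1-x_it)$ respectively. Comparing coefficients of $t^m$ gives the Newton-type recursion $\sum_{j=0}^m(-1)^j e_j h_{m-j}=0$ for $m\geq 1$ (with $e_0=h_0=1$), which inductively expresses each $h_m$ as an integer polynomial in $e_1,\dots,e_m$ and, solved the other way, each $e_m$ as an integer polynomial in $h_1,\dots,h_m$. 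For $1\leq m\leq k$ this gives $\ZZ[e_1,\dots,e_k]=\ZZ[h_1,\dots,h_k]$. Algebraic independence of the $h_i$'s then follows because the substitution homomorphism $\ZZ[y_1,\dots,y_k]\to\ZZ[y_1,\dots,y_k]$ sending $y_i$ to the polynomial expression for $h_i$ in the $e_j$'s has an explicit inverse (expressing each $e_i$ in terms of the $h_j$'s), hence is an automorphism, so no nontrivial relation among the $h_i$'s can hold.

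The substantive work is all in the first paragraph; the step that requires the most care is verifying that the leading-monomial descent is well-founded and integer-coefficient, that is, that the leading coefficient of $e_1^{a_1-a_2}\cdots e_k^{a_k}$ is $1$ (so subtraction keeps us in $\ZZ[x_1,\dots,x_k]$) and that the total degree does not grow. Everything in the passage to the $h_i$'s is routine once the generating function identity is in hand, with the key observation that the recursion is solvable over $\ZZ$ precisely because $e_0=h_0=1$.
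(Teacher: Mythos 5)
The paper states this theorem as a well-known fact and gives no proof, so there is nothing to compare against. Your proof is correct and is the standard one: the lexicographic leading-monomial descent establishes that $e_1,\dots,e_k$ freely generate $\Lambda_k$, and the generating-function identity $\bigl(\sum_m h_m t^m\bigr)\bigl(\sum_m (-1)^m e_m t^m\bigr)=1$ yields the Newton-type recursion that lets you express each $h_m$ ($m\le k$) integrally in the $e_j$'s and vice versa, giving $\ZZ[e_1,\dots,e_k]=\ZZ[h_1,\dots,h_k]$. The one place worth being explicit is the final algebraic-independence step for the $h_i$'s: your automorphism argument works, but it silently uses the already-proved algebraic independence of the $e_j$'s to promote the identities $e_i=Q_i(h_1,\dots,h_i)$ and $h_j=P_j(e_1,\dots,e_j)$ from relations in $\Lambda_k$ to polynomial identities in $\ZZ[y_1,\dots,y_k]$, which is what makes the substitution maps genuine mutually inverse ring endomorphisms. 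Stated that way the argument is airtight.
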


This theorem means that all possible products $e_1^{i_1}\dots e_k^{i_k}$ for $i_1,\dots,i_k\geq 0$ form a basis of $\Lambda_k$ as a $\ZZ$-module, and so do the elements  $h_1^{i_1}\dots h_k^{i_k}$. But now we will construct another basis of this ring, which is more suitable for our needs. Its elements will be called \emph{Schur polynomials}.

For this consider the set of skew-symmetric polynomials, i.e., the polynomials satisfying the relation
\[
f(x_1,\dots,x_k)=(-1)^\sigma f(x_{\sigma(1)},\dots,x_{\sigma(k)}), \qquad \sigma\in S_k.
\]
They also form a $\ZZ$-module (and also a $\Lambda_k$-module, but not a subring in $\ZZ[x_1,\dots,x_k]$). Let us construct a basis of this module indexed by partitions $\lambda$ with at most $k$ rows: for each $\lambda=(\lambda_1,\dots,\lambda_k)$, where $\lambda_1\geq \dots\geq\lambda_k\geq 0$, let us make this sequence into a strictly increasing one by adding $k-i$ to its $i$-th term:
\[
\lambda+\delta=(\lambda_1+k-1,\lambda_2+k-2,\dots,\lambda_{k-1}+1,\lambda_k).
\]
Now consider a skew-symmetric polynomial $a_{\lambda+\delta}$ obtained by skew-symmetrization from  $x^{\lambda+\delta}:=x_1^{\lambda_1+k-1}x_2^{\lambda_2+k-2}\dots x_k^{\lambda_k}$:
\[
a_{\lambda+\delta}=\sum_{\sigma\in S_k}(-1)^\sigma x_{\sigma(1)}^{\lambda_1+k-1}x_{\sigma(2)}^{\lambda_2+k-2}\dots x_{\sigma(k)}^{\lambda_k}.
\]
This polynomial can also be presented as a determinant
\[
a_{\lambda+\delta}=\begin{vmatrix}
x_1^{\lambda_1+k-1} &x_2^{\lambda_1+k-1}& \dots & x_k^{\lambda_1+k-1}\\
x_1^{\lambda_2+k-2} &x_2^{\lambda_2+k-2}& \dots & x_k^{\lambda_2+k-2}\\
\vdots & \vdots & \ddots &\vdots\\
x_1^{\lambda_k} &x_2^{\lambda_k}& \dots & x_n^{\lambda_k}\\
\end{vmatrix}
\]

Every symmetric polynomial is divisible by $x_i-x_j$ for each $i<j$. This means that $a_{\lambda+\delta}$ is divisible by the \emph{Vandermonde determinant} $a_\delta$ corresponding to the empty partition:
\[
a_\delta=\prod_{i>j} (x_i-x_j)=
\begin{vmatrix}
x_1^{k-1} &x_2^{k-1}& \dots & x_k^{k-1}\\
x_1^{k-2} &x_2^{k-2}& \dots & x_k^{k-2}\\
\vdots & \vdots & \ddots &\vdots\\
1 &1& \dots &1\\
\end{vmatrix}
\]

\begin{definition} Let $\lambda$ be a partition with at most $k$ rows. Define the \emph{Schur polynomial} corresponding to $\lambda$ as the quotient
\[
s_\lambda(x_1,\dots,x_k)=a_{\lambda+\delta}/a_\delta.
\]
\end{definition}

\begin{xca} Show that if $\lambda=(m)$ is a one-line partition, the corresponding Schur polynomial is equal to the $k$-th complete symmetric polynomial: $s_{(m)}=h_m$. Likewise, if $\lambda=(1^m)$ is a one-column partition formed by $m$ rows of length 1, then $s_{(1^k)}=e_m$ is the $m$-th elementary symmetric polynomial.
\end{xca}

Schur polynomials also admit a combinatorial definition (as opposed to the previous algebraic definition). It is based on the notion of Young tableaux, which we have already seen in the previous subsection. Let $\lambda$ be a partition with at most $k$ rows. A \emph{semistandard Young tableau of shape $\lambda$} is a filling of the boxes of $\lambda$ by integers from the set $\{1,\dots,k\}$ in such a way that the entries in the boxes non-strictly increase along the rows and strictly increase along the columns. Denote the set of all semistandard Young tableaux of shape $\lambda$ by $SSYT(\lambda)$. Let $T$ be such a tableau; denote by $x^T$ the monomial $x_1^{t_1}\dots x_k^{t_k}$, where $t_1,\dots,t_k$ are the numbers of occurence of the entries $1,\dots,k$ in $T$.

The following theorem says that the Schur polynomial $s_\lambda$ is obtained as the sum of all such $x^T$ where $T$ runs over the set of all semistandard Young tableaux of shape $\lambda$.
\begin{theorem}\label{thm:combschur} Let $\lambda$ be a Young diagram with at most $k$ rows. Then
 \[
  s_\lambda(x_1,\dots,x_k)=\sum_{T\in SSYT(\lambda)} x^T.
 \]
\end{theorem}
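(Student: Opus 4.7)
Denote the right-hand side by $\tilde s_\lambda(x_1,\ldots,x_k) := \sum_{T \in SSYT(\lambda)} x^T$. My goal is to show $\tilde s_\lambda = s_\lambda$, equivalently $a_\delta \cdot \tilde s_\lambda = a_{\lambda+\delta}$.

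I would first prove that $\tilde s_\lambda$ is a symmetric polynomial using the Bender--Knuth involution. For each $i \in \{1,\ldots,k-1\}$, I construct an involution $\beta_i$ on $SSYT(\lambda)$ that swaps the multiplicities of $i$ and $i+1$ in every tableau while fixing all other multiplicities. Row by row: mark as \emph{locked} every $i$ sitting directly above an $(i{+}1)$; the remaining free entries equal to $i$ or $i+1$ in each row form a contiguous substring $i^p(i{+}1)^q$ by the row weak-increase condition, which I replace by $i^q(i{+}1)^p$. A short local check shows $\beta_i(T)$ is again semistandard and $\beta_i^2=\mathrm{id}$, so swapping $x_i\leftrightarrow x_{i+1}$ preserves $\tilde s_\lambda$. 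Since simple transpositions generate $S_k$, $\tilde s_\lambda\in\Lambda_k$.

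Since $\tilde s_\lambda$ is symmetric and $a_\delta$ is skew-symmetric, the product $a_\delta \tilde s_\lambda$ is skew-symmetric. The alternants $\{a_{\nu+\delta}\}$ indexed by partitions $\nu$ with at most $k$ parts form a $\ZZ$-basis of the skew-symmetric polynomials (skew-symmetrization of a monomial with distinct exponents yields $\pm a_{\nu+\delta}$ and with repeated exponents yields $0$), so I can write
\[
a_\delta\tilde s_\lambda = \sum_\nu c_\nu\, a_{\nu+\delta}.
\]
Since $\nu+\delta$ has strictly decreasing entries, no other element of its $S_k$-orbit equals $\nu+\delta$, and hence $c_\nu = [x^{\nu+\delta}](a_\delta\tilde s_\lambda)$. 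Writing $\tau\cdot\delta$ for the sequence whose $j$-th entry is $k-\tau^{-1}(j)$ (i.e.\ the exponent of $x_j$ in $\tau(x^\delta)$), this coefficient expands as
\[
c_\nu \;=\; \sum_{\tau \in S_k}(-1)^\tau\, K_{\lambda,\;\nu+\delta-\tau\cdot\delta},
\]
where $K_{\lambda,\mu}$ denotes the number of $T\in SSYT(\lambda)$ whose multiplicity vector equals $\mu$ (and is $0$ if $\mu$ has a negative coordinate). Proving $\tilde s_\lambda=s_\lambda$ thus reduces to showing $c_\nu=\delta_{\nu,\lambda}$.

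The term $\tau=\mathrm{id}$ with $\nu=\lambda$ contributes $+1$, realized by the superstandard tableau with row $i$ filled entirely by $i$'s; every remaining pair $(\tau,T)$ must cancel. The main obstacle is to exhibit an explicit sign-reversing involution on these pairs, flipping $(-1)^\tau$ while preserving the monomial, and having the distinguished pair as its only fixed point. A cleaner alternative, which I would prefer in a full write-up, is the Lindstr\"om--Gessel--Viennot approach: encode each SSYT of shape $\lambda$ as a non-intersecting $k$-tuple of north-east lattice paths from $A_i=(1-i,0)$ to $B_i=(\lambda_i-i+1,k)$, with each east step at height $j$ weighted by $x_j$. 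Then $\prod_i w(P_i)=x^T$, the non-intersecting condition encodes column-strictness, and LGV identifies $\tilde s_\lambda$ with the Jacobi--Trudi determinant $\det\bigl(h_{\lambda_j-j+i}\bigr)_{i,j=1}^k$; the bialternant formula follows from Jacobi--Trudi by a Cauchy--Binet computation based on the generating identity $\prod_r(1-x_r t)^{-1}=\sum_m h_m\, t^m$.
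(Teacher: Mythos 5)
The paper itself declines to prove this theorem --- it refers the reader to \cite{Manivel98} and \cite{Fulton97} --- so there is no internal argument to compare against; I evaluate your sketch on its own terms.

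Your first route is correct up through the reduction to showing $c_\nu=\sum_{\tau\in S_k}(-1)^\tau K_{\lambda,\,\nu+\delta-\tau\cdot\delta}=\delta_{\nu,\lambda}$, but the sign-reversing involution that would finish it is not incidental bookkeeping: it is essentially the whole theorem, and you correctly flag that you have not produced it. As written, that route is an honest reduction, not a proof. The Gessel--Viennot alternative you then offer \emph{is} a complete and standard argument, and choosing it makes the Bender--Knuth step redundant (symmetry falls out of the determinant). Two details to tighten. First, with $A_i=(1-i,0)$ and $B_i=(\lambda_i-i+1,k)$ a monotone path has $k$ north steps, hence $k+1$ possible heights for east steps; start instead at $A_i=(1-i,1)$ (or forbid east steps at the bottom level) so each path has exactly one horizontal run at each of the $k$ heights $1,\dots,k$, making the weight of a path $A_i\to B_j$ come out to $h_{\lambda_j-j+i}(x_1,\dots,x_k)$ on the nose. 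Second, the passage from Jacobi--Trudi to the bialternant is most transparently a $k\times k$ matrix factorization rather than Cauchy--Binet: multiplying $\prod_r(1-x_rt)^{-1}=\sum_m h_m t^m$ by $\prod_{r\ne i}(1-x_rt)$ and comparing coefficients of $t^p$ gives $\sum_m(-1)^m e_m(x_1,\dots,\widehat{x_i},\dots,x_k)\,h_{p-m}=x_i^p$, which says $\bigl(x_i^{\lambda_j+k-j}\bigr)_{i,j}=E\cdot\bigl(h_{\lambda_j-j+i}\bigr)_{i,j}$ with $E_{ip}=(-1)^{k-p}e_{k-p}(x_1,\dots,\widehat{x_i},\dots,x_k)$; taking $\lambda=\varnothing$ identifies $\det E=a_\delta$, and the bialternant formula follows. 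Modulo these points the plan is sound.
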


We will not prove this theorem here; its proof can be found in \cite{Manivel98} or in \cite{Fulton97}.

\begin{example}
 Let $k=3$, $\lambda=(2,1)$. There are 8 semistandard Young tableaux of shape $\lambda$:
\[
 \young(11,2) \qquad\young(11,3) \qquad\young(12,2) \qquad\young(12,3) \qquad\young(13,2)\qquad \young(13,3)\qquad \young(22,3)\qquad \young(23,3)
\]
The corresponding Schur polynomial then equals
\[
 s_{(2,1)}(x,y,z)=x^2y+x^2z+xy^2+2xyz+xz^2+y^2z+yz^2.
\]
\end{example}

\begin{xca} Show by a direct computation that the algebraic definition of $s_{(2,1)}(x,y,z)$ gives the same result.
\end{xca}

\begin{remark} Theorem~\ref{thm:combschur} provides an easy way to compute Schur polynomials (this is easier than dividing one skew-symmetric polynomial by another). However, this theorem is by no means trivial: first of all, it is absolutely not obvious why does the summation over all Young tableaux of a certain shape give a symmetric polynomial! We will see an analogue of this theorem for flag varieties (Theorem~\ref{thm:fominkirillov}), but there Young tableaux will be replaced by more involved combinatorial objects, \emph{pipe dreams}.
\end{remark}

\subsection{Pieri rule for symmetric polynomials}
Now let us multiply a Schur polynomial by a complete or elementary symmetric polynomial. It turns out that they satisfy the same Pieri rule as Schubert classes. Recall that in Subsection~\ref{ssec:pieri} we introduced the following notation: if $\lambda$ is a Young diagram, then $\lambda\otimes 1^m$ and $\lambda\otimes m$ are two sets of diagrams obtained from $\lambda$ by adding $m$ boxes in such a way that no two boxes are in the same column (resp. in the same row).

\begin{theorem}[Pieri formulas]\label{thm:pieri_schur} With the previous notation,
\[
 s_\lambda e_m=\sum_{\mu\in\lambda\otimes 1^m} s_\mu\qquad\text{and}\qquad
s_\lambda h_m=\sum_{\mu\in\lambda\otimes m} s_\mu
\]
\end{theorem}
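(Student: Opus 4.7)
The plan is to multiply $a_{\lambda+\delta}$ by the generating series
\[
\prod_{i=1}^k (1 + x_i y) = \sum_{m \ge 0} e_m(x)\, y^m \quad\text{and}\quad \prod_{i=1}^k (1 - x_i y)^{-1} = \sum_{m \ge 0} h_m(x)\, y^m,
\]
extract the coefficient of $y^m$ on each side, and divide by $a_\delta$ to recover the two Pieri identities.

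For the $e_m$ case I would write
\[
a_{\lambda+\delta} \prod_{i=1}^k (1 + x_i y) = \sum_{\sigma \in S_k} (-1)^\sigma \prod_{i=1}^k x_{\sigma(i)}^{\lambda_i + k - i}\bigl(1 + x_{\sigma(i)} y\bigr)
\]
and distribute the product over $i$ by indexing the choice ``$x_{\sigma(i)} y$ rather than $1$'' by a subset $S \subseteq \{1,\dots,k\}$. The $S$-contribution is $y^{|S|} a_{\nu(S) + \delta}$, where $\nu(S) = \lambda + \chi_S$ is obtained from $\lambda$ by incrementing the entries with indices in $S$. A short case analysis on consecutive pairs $(i,i+1)$ shows that the exponents $\nu(S)_i + k - i$ are pairwise distinct exactly when $\nu(S)$ is itself a partition, which in turn happens exactly when $\mu = \nu(S)$ is obtained from $\lambda$ by adding at most one box per row, i.e., when $\mu/\lambda$ is a vertical strip. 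Thus
\[
a_{\lambda+\delta}\, e_m = \sum_{\mu \in \lambda \otimes 1^m} a_{\mu+\delta},
\]
and dividing by $a_\delta$ gives the Pieri identity for $e_m$.

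The same strategy applied to $\prod_i (1 - x_i y)^{-1}$ yields
\[
a_{\lambda+\delta} \sum_{m \ge 0} h_m\, y^m = \sum_{\mathbf{r} \in \NN^k} y^{|\mathbf{r}|}\, a_{\lambda+\mathbf{r}+\delta}.
\]
Unlike the $e_m$ case the components $r_i$ are no longer restricted to $\{0,1\}$, so the shifted exponents need not be in decreasing order; $a_{\lambda+\mathbf{r}+\delta}$ still vanishes when two of them coincide, and otherwise equals $(-1)^\tau a_{\mu+\delta}$, where $\tau \in S_k$ is the permutation that reorders $\lambda+\mathbf{r}+\delta$ into the decreasing sequence $\mu+\delta$. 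Fixing a partition $\mu$, the coefficient of $y^{|\mu|-|\lambda|} a_{\mu+\delta}$ becomes
\[
\sum_{\tau \in S_k,\ (\mu+\delta)_{\tau(i)} \ge (\lambda+\delta)_i\ \forall i} (-1)^\tau = \det(M),\qquad M_{ij} = \bigl[(\mu+\delta)_j \ge (\lambda+\delta)_i\bigr].
\]
The technical heart of the argument is the evaluation of this determinant. Since both $\lambda+\delta$ and $\mu+\delta$ are strictly decreasing, $M$ is a $0/1$ matrix whose support is a staircase region, and an elementary linear-algebra lemma shows that $\det M$ equals $1$ precisely when $\mu_i \ge \lambda_i \ge \mu_{i+1}$ for all $i$ (which is the interleaving condition defining a horizontal strip $\mu/\lambda$) and vanishes otherwise. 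This staircase-determinant step is the one I expect to require the most care. Combining the pieces yields $a_{\lambda+\delta}\, h_m = \sum_{\mu \in \lambda\otimes m} a_{\mu+\delta}$, and dividing through by $a_\delta$ completes the proof of the second Pieri identity.
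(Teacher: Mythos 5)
Your proof is correct. For the $e_m$ identity it is essentially the paper's own argument: the paper expands $a_{\lambda+\delta}e_m$ directly as $\sum_{\alpha\in\{0,1\}^k}a_{\lambda+\alpha+\delta}$ and observes that a term vanishes unless $\lambda+\alpha$ is a partition; your generating-function packaging in $y$ and your consecutive-pair analysis are just a slightly more explicit version of the same computation. Where you genuinely add something is the $h_m$ identity, which the paper dismisses with ``the second formula is obtained in a similar way.'' It is in fact \emph{not} similar in the naive sense: for $\mathbf{r}\in\NN^k$ the non-partition terms $a_{\lambda+\mathbf{r}+\delta}$ no longer vanish individually but reappear with signs after sorting, so one must show that the signed multiplicity of each $a_{\mu+\delta}$ is $1$ for horizontal strips and $0$ otherwise. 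Your reduction of that multiplicity to $\det M$ with $M_{ij}=[(\mu+\delta)_j\ge(\lambda+\delta)_i]$ is a clean way to organize the cancellation, and the determinant evaluation is right: each row of $M$ is a block of $c_i$ ones followed by zeros with $c_1\le\cdots\le c_k$, so $\det M\ne 0$ forces $c_i=i$ for all $i$, which unwinds to the interleaving condition $\mu_i\ge\lambda_i\ge\mu_{i+1}$, and then $M$ is lower unitriangular with $\det M=1$. (An equivalent alternative, if you prefer to avoid the determinant, is a sign-reversing involution pairing off the terms with $\tau\ne\mathrm{id}$.) One cosmetic caution: the paper's wording of which of $\lambda\otimes 1^m$, $\lambda\otimes m$ means ``no two boxes in the same row'' versus ``column'' is internally inconsistent; your identification of the $e_m$ rule with vertical strips and the $h_m$ rule with horizontal strips is the correct convention and matches the geometric Pieri rule stated earlier in the paper.
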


\begin{proof} The first formula is obtained from the identity
\begin{multline*}
 a_{\lambda+\delta} e_m=\sum_{\sigma\in S_k}\sum_{i_1<\dots<i_m} (-1)^\sigma x^{\sigma(\lambda+\delta)} x_{\sigma(i_1)}\dots x_{\sigma(i_m)}=\sum_{\alpha\in\{0,1\}^k} a_{\lambda+\alpha+\delta},
\end{multline*}
taking into account that $a_{\lambda+\alpha+\delta}$ is nonzero iff $\lambda+\alpha$ is a partition. The second formula is obtained in a similar way. 
\end{proof}

So Pieri formulas hold both for $\Lambda_k$ and $H^*(\Gr(k,n))$. Since $h_1,h_2,\dots$ and $\sigma_1,\dots,\sigma_{n-k}$ are systems of generators of those rings, they completely determine structure constants of these rings. This implies the following theorem.

\begin{theorem} The map
\[
\varphi_{k,n}\colon \Lambda_k\to H^*(\Gr(k,n)),
\]
which sends $s_\lambda$ to $\sigma_{\lambda}$ if $\lambda\subset k\times (n-k)$ and to 0 otherwise, is a ring epimorphism.
\end{theorem}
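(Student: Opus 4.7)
The plan is to verify in turn that $\varphi_{k,n}$ is well-defined, that it is surjective, and that it is multiplicative, the third point being the real content. The first two come for free: the Schur polynomials $\{s_\lambda\}$ indexed by partitions $\lambda$ with at most $k$ rows form a $\ZZ$-basis of $\Lambda_k$ (divisibility of each skew-symmetric $a_{\lambda+\delta}$ by the Vandermonde $a_\delta$ gives a $\ZZ$-module isomorphism between the skew-symmetric polynomials and $\Lambda_k$), so the prescription in the statement defines a unique $\ZZ$-linear map. Surjectivity is then clear because the Schubert classes $\{\sigma_\lambda : \lambda \subset k\times(n-k)\}$ form a $\ZZ$-basis of $H^*(\Gr(k,n))$ by the corollary of Subsection~\ref{ssec:decomp}, and each of them equals $\varphi_{k,n}(s_\lambda)$.

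For multiplicativity my plan is to reduce everything to multiplication by a single complete symmetric polynomial, which is precisely the situation governed by the two Pieri rules. By the fundamental theorem on symmetric polynomials $\Lambda_k = \ZZ[h_1,\ldots,h_k]$, so every element of $\Lambda_k$ is a $\ZZ$-polynomial in $h_1,\ldots,h_k$; a routine induction on the number of $h$-factors, combined with associativity, shows that it suffices to verify
\[
\varphi_{k,n}(f \cdot h_m) = \varphi_{k,n}(f) \cdot \varphi_{k,n}(h_m)
\]
for all $f \in \Lambda_k$ and all $m \geq 1$. By $\ZZ$-linearity in $f$ we may further restrict to $f = s_\lambda$ for an arbitrary partition $\lambda$ with at most $k$ rows.

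This one-row identity I would establish by comparing the two Pieri rules term by term. The Pieri rule for Schur polynomials (Theorem~\ref{thm:pieri_schur}) gives
\[
\varphi_{k,n}(s_\lambda \cdot h_m) \;=\; \sum_{\mu \in \lambda \otimes m} \varphi_{k,n}(s_\mu) \;=\; \sum_{\substack{\mu \in \lambda \otimes m \\ \mu \subset k\times(n-k)}} \sigma_\mu,
\]
since $\varphi_{k,n}$ kills the Schur polynomials whose indexing partition fails to fit in the rectangle. On the other hand, when $\lambda \subset k\times(n-k)$ and $m \leq n-k$ the Pieri rule for Schubert varieties from Subsection~\ref{ssec:pieri} produces exactly the same sum for $\varphi_{k,n}(s_\lambda) \cdot \varphi_{k,n}(h_m) = \sigma_\lambda \cdot \sigma_m$. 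In the remaining cases---either $\lambda_1 > n-k$ or $m > n-k$---the right-hand side vanishes because one of its factors does, while the left-hand side vanishes too because every $\mu \in \lambda \otimes m$ then satisfies $\mu_1 \geq \max(\lambda_1,m) > n-k$. I expect the only real bookkeeping to be exactly this case analysis, accounting for the truncation outside the rectangle; conceptually the proof is nothing more than the observation that the two Pieri rules have the same combinatorial shape, the only difference being that in cohomology one discards partitions that leave the $k\times(n-k)$ rectangle.
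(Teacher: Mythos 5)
Your proposal is correct and follows essentially the same route as the paper, which likewise deduces the theorem from the fact that the two Pieri rules coincide and that $h_1,h_2,\dots$ and $\sigma_1,\dots,\sigma_{n-k}$ generate the respective rings; you merely spell out the induction on $h$-factors and the bookkeeping for partitions that leave the $k\times(n-k)$ rectangle, which the paper leaves implicit. No gaps.
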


\section{Flag varieties}\label{sec:flags}

\subsection{Definition and first properties}

As before, let $V$ be an $n$-dimensional vector space. A \emph{complete flag} $U_\bullet$ in $V$ is an increasing sequence of subspaces, such that the dimension of the $i$-th subspace is equal to $i$:
\[
U_\bullet=(U_0\subset U_1\subset \dots\subset U_{n-1}\subset U_n= V),\qquad \dim U_i=i,\quad i\in [0,n].
\]

The set of all complete flags in $V$ will be denoted by $\Fl(V)$ or $\Fl(n)$.

To each basis $u_1,\dots, u_n$ of $V$ we can assign a complete flag by setting $U_i=\langle u_1,\dots,u_i\rangle$. Since $\GL(V)$ acts transitively on bases, it also acts transitively on flags. It is easy to describe the stabilizer of this action, i.e., the subgroup fixing a given flag  $U_\bullet$. Suppose that $U_\bullet$ corresponds to the standard basis $e_1,\dots,e_n$ of $V$. Then $\Stab_{\GL(V)} U_\bullet$ is the group of nondegenerate upper-triangular matrices, which we denote by $B$.

This means that $\Fl(V)=\GL(V)/B$ is a homogeneous space: each flag can be thought of as a coset of the right action of $B$ on $\GL(V)$. From this we see that $\dim\Fl(V)=\dim\GL(V)-\dim B=\frac{n(n-1)}2$. So, by the same argument as in the case of Grassmannians, it is a quasiprojective algebraic variety (or a smooth manifold, if we prefer to work with Lie groups).

There is an obvious embedding $\Fl(V)\hookrightarrow \Gr(1,V)\times\Gr(2,V)\times\dots\times\Gr(n-1,V)$ of a flag variety into a product of Grassmannians: each flag is mapped into the set of subspaces it consists of, and $\Fl(V)$ is defined inside this direct product by incidence relations $V_i\subset V_{i+1}$. If we embed each Grassmannian by Pl\"ucker into a projective space: $\Gr(k,V)\hookrightarrow \PP^{N_k-1}$, these relations will be given by algebraic equations. So, $\Fl(V)$ is an algebraic subvariety of $\PP^{N_1-1}\times\dots\PP^{N_{n-1}-1}$, where $N_k=\binom{n}{k}$.
The latter product of projective spaces can be embedded by Segre into $\PP^{N_1\dots N_{n-1}-1}$. 

Summarizing, we get the following
\begin{prop} $\Fl(V)$ is a projective algebraic variety of dimension $n(n-1)/2$.
\end{prop}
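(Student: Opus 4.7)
The plan is to verify the two claims of the proposition separately, following the outline already sketched in the paragraph preceding the statement.

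For the dimension, I would use the homogeneous-space description $\Fl(V)=\GL(V)/B$. Since $\dim\GL(V)=n^2$ and $B$ is the subgroup of invertible upper-triangular matrices, which has dimension $n+\binom{n}{2}=n(n+1)/2$, one obtains
\[
\dim\Fl(V)=n^2-\tfrac{n(n+1)}{2}=\tfrac{n(n-1)}{2}
\]
immediately.

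For projectivity, I would show that the natural map
\[
\iota\colon\Fl(V)\to\Gr(1,V)\times\cdots\times\Gr(n-1,V),\qquad U_\bullet\mapsto(U_1,\ldots,U_{n-1}),
\]
is a closed embedding. Injectivity is obvious, as the intermediate subspaces determine the flag. Composing with the product of Pl\"ucker embeddings and then with the Segre embedding lands us in a single projective space $\PP^N$. To check closedness of the image, I must verify that the incidence conditions $U_i\subset U_{i+1}$ cut out a Zariski-closed subset of the product. The approach is to note that $U_i\subset U_{i+1}$ is equivalent to $\omega_i$ dividing $\omega_{i+1}$, i.e., $\omega_{i+1}\in\omega_i\wedge V\subset\Lambda^{i+1}V$. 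Since the image $\omega_i\wedge V$ of the map $V\to\Lambda^{i+1}V$, $v\mapsto v\wedge\omega_i$, has dimension $n-i$ (its kernel being $U_i$), this amounts to a rank condition: adjoining $\omega_{i+1}$ to a spanning set of $\omega_i\wedge V$ must not increase the rank. This translates to the vanishing of appropriate $(n-i+1)\times(n-i+1)$ minors, a bihomogeneous condition in the Pl\"ucker coordinates of $U_i$ and $U_{i+1}$, hence closed.

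The main technical point to verify will be that this rank condition cuts out exactly $\iota(\Fl(V))$ on the product of Grassmannians, with no spurious extra locus. This should reduce to the observation that at points of $\Gr(i,V)\times\Gr(i+1,V)$ the multivectors $\omega_i$ and $\omega_{i+1}$ are already decomposable, so $\omega_{i+1}\in\omega_i\wedge V$ is precisely equivalent to the set-theoretic inclusion $U_i\subset U_{i+1}$. A cleaner alternative, which avoids writing down equations explicitly, is to invoke that $\GL(V)/B$ is quasiprojective (the general algebraic-group fact already cited in the excerpt for Grassmannians) and then deduce that the image of $\iota$ is closed from compactness of $\Fl(V)$, identified with the quotient of the compact group $U(n)$ by its maximal torus.
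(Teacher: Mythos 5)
Your proof is correct and follows essentially the same route as the paper: the dimension count via $\Fl(V)=\GL(V)/B$, and projectivity via the embedding into $\Gr(1,V)\times\cdots\times\Gr(n-1,V)$ followed by Pl\"ucker and Segre. The only difference is that you actually spell out why the incidence conditions $U_i\subset U_{i+1}$ are closed algebraic conditions (via the rank condition on $\omega_{i+1}\in\omega_i\wedge V$), a detail the paper merely asserts.
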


\subsection{Schubert decomposition and Schubert varieties} In this subsection we construct a decomposition of a full flag variety. It will be very similar to the Schubert decomposition of Grassmannians which we saw in the previous section.

As in the case of Grassmannians, let us fix a standard basis $e_1,\dots,e_n$ of $V$ and a complete flag related to this basis: $V_\bullet$, formed by the subspaces $V_i=\langle e_1,\dots,e_i\rangle$.
This flag is stabilized by the subgroup $B$ of nondegenerate upper-triangular matrices.

Let $w\in S_n$ be a permutation. We can associate to it the \emph{rank function} $r_w\colon \{1,\dots,n\}\times \{1,\dots,n\}\to \ZZ_{\geq 0}$ as follows:
\[
 r_w(p,q)=\#\{i\leq p, w(i)\leq q\}.
\]
This function can also be described as follows. Let $M_w$ be a \emph{permutation matrix} corresponding to $w$, i.e. the matrix whose $(i,j)$-th entry is equal to 1 if $w(i)=j$, and to 0 otherwise. Then $M_w$ permutes the basis vectors $e_1,\dots,e_n$ as prescribed by $w^{-1}$. Then $r_w(p,q)$ equals the rank of the corner submatrix of $M_w$ formed by its first $p$ rows and $q$ columns.

Define \emph{Schubert cells} $\Omega_w$ and \emph{Schubert varieties} $X_w$ as follows:
\begin{eqnarray*}
  \Omega_w&=&\{U_\bullet\in\Fl(n)\mid \dim(W_p\cap V_q)=r_w(p,q), 1\leq p,q\leq n\},\\
  X_w&=&\{U_\bullet\in\Fl(n)\mid \dim(W_p\cap V_q)\geq r_w(p,q), 1\leq p,q\leq n\}.
\end{eqnarray*}
It is clear that $X_w=\overline{\Omega_w}$.

As in the case of Schubert cells in Grassmannians, we can find a ``special point'' $U^w_\bullet$ inside each $\Omega_w$. It it stable under the action of the diagonal torus, and each of the subspaces $U_i^w$ is spanned by basis vectors:
\[
 U_i^w=\langle e_{w(1)},\dots,e_{w(n)}\rangle.
\]

Imitating the proof of Proposition~\ref{prop:schubertdecomp}, we can see that for each element $U_\bullet\in\Omega_w$ there is a uniquely determined matrix $(x_{ij})_{1\leq i,j\leq n}$ such that $U_i$ is generated by its first $i$ rows, and
\[
 x_{i,w(i)}=1\qquad\text{and}\qquad x_{ij}=0 \text{ if }j>w(i) \text{ or }i>w^{-1}(j).
\]

This matrix can be constructed as follows. We put $1$'s at each $(i,w(i))$. Then we draw a hook of zeroes going right and down from each entry filled by $1$. All the remaining entries are filled by stars (i.e., they can be arbitrary). Again we get a coordinate system on $\Omega_w$ with the origin at $U^w_\bullet$.

\begin{example} Let $w=(25413)$ (we use the one-line notation for permutations: this means that $w(1)=2$, $w(2)=5$, etc.). Then each element of $\Omega_{w}$ corresponds to a uniquely determined matrix of the form
\[
 \begin{pmatrix}
  * & 1 & 0 & 0 & 0\\
  * & 0 & * & * & 1\\
  * & 0 & * & 1 & 0\\
  1 & 0 & 0 & 0 & 0\\
  0 & 0 & 1 & 0 & 0
 \end{pmatrix}
\]
\end{example}

\begin{xca}\label{ex:length}
 Show that the number of stars is equal to the length $\ell(w)$ of the permutation $w$, i.e. the number of its inversions:
\[
 \ell(w)=\#\{(i,j)\mid i<j, w(i)>w(j)\}.
\]
\end{xca}
We have thus shown that $\Omega_w\cong\CC^{\ell(w)}$ is indeed a cell, that is, an affine space. 
Another way of proving this was to note that each $\Omega_w$ is an orbit of the left action of the upper-triangular subgroup $B$ on $\Fl(n)$, so Schubert decomposition is just the decomposition of $\Fl(n)$ into $B$-orbits.

\begin{example} Just as in the case of Grassmannians, there is a unique zero-dimensional cell, corresponding to the identity permutation $e\in S_n$, and a unique open cell $\Omega_{w_0}$ corresponding to the \emph{maximal length permutation} $w_0=(n,n-1,\dots,2,1)$.
\end{example}

\begin{definition} Let us introduce a partial order on the set of permutations $w\in S_n$: we will say that $v\leq w$ if $r_v(p,q)\geq r_w(p,q)$ for each $1\leq p,q\leq n$. This order is called the \emph{Bruhat order}.
\end{definition}

\begin{xca} Show that the permutations $e$ and $w_0$ are the minimal and the maximal elements for this order.
\end{xca}

\begin{example} This is the Hasse diagram of the Bruhat order for the group $S_3$. The edges represent \emph{covering relations}, i.e., $v$ and $w$ are joined by an edge, with $w$ on the top, if $v\leq w$ and there is no $u\in S_n$ such that $v\lneqq u\lneqq w$.
\[
 \xymatrix{
 & (321)\ar@{-}[dl]\ar@{-}[dr]\\
(312)\ar@{-}[d]\ar@{-}[drr] & & (231)\ar@{-}[d]\ar@{-}[dll]\\
(132)\ar@{-}[dr] && (213)\ar@{-}[dl]\\
&(123)
}
\]
\end{example}
For flag varieties this diagram plays the same role as the inclusion graph of Young diagrams for Grassmannians.

\begin{prop} For each permutation $w\in S_n$ its Schubert variety
\[
 X_w=\bigsqcup_{v\leq w} \Omega_v
\]
is the disjoint union of the Schubert cells of permutations that are less than or equal to $w$ with respect to the Bruhat order.
\end{prop}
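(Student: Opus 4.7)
The plan is to establish two facts and combine them: (a) the Schubert cells $\{\Omega_w\}_{w \in S_n}$ partition $\Fl(n)$, and (b) $\Omega_v \subseteq X_w$ if and only if $v \leq w$ in Bruhat order. Once both are in hand, the desired stratification of $X_w$ is automatic: on the one hand $\bigsqcup_{v \leq w}\Omega_v \subseteq X_w$ by (b), and on the other hand any $U_\bullet \in X_w$ lies in some unique $\Omega_v$ by (a), and then $\Omega_v \subseteq X_w$ forces $v \leq w$.

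For (a), I would argue directly from the row-echelon description already used in Exercise~\ref{ex:length}. Given any flag $U_\bullet$, choose any matrix whose first $i$ rows span $W_i$ for each $i$. Applying Gaussian elimination (row operations, which preserve each $W_i$ since we may restrict to operations that only add earlier rows to later rows, then scale), bring it to a unique reduced form in which each row has a distinguished pivot $1$ with zeroes above and to the right of it within its column, and all other entries are zeroes whenever they sit strictly to the right of a pivot in the same row or strictly below a pivot in the same column. The pivot positions form a permutation matrix $M_w$ for a unique $w \in S_n$, and by construction $\dim(W_p \cap V_q)$ equals the number of pivots in the top-left $p \times q$ submatrix, which is $r_w(p,q)$. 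Thus $U_\bullet \in \Omega_w$, and the permutation $w$ is determined by $U_\bullet$. (Equivalently, one may invoke the Bruhat decomposition $\GL(V) = \bigsqcup_{w \in S_n} B w B$ together with $\Fl(n) = \GL(V)/B$ to see that the Schubert cells are exactly the $B$-orbits on $\Fl(n)$.)

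For (b), the definitions give the statement directly. If $U_\bullet \in \Omega_v$, then $\dim(W_p \cap V_q) = r_v(p,q)$ for all $p,q$, so $U_\bullet \in X_w$ iff $r_v(p,q) \geq r_w(p,q)$ for all $p,q$, which is precisely the condition $v \leq w$ in Bruhat order. Hence $\Omega_v \subseteq X_w$ iff $v \leq w$, and combining with (a) finishes the proof.

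The main obstacle is step (a), specifically the verification that the rank function $(p,q)\mapsto \dim(W_p \cap V_q)$ of an arbitrary flag is actually the rank function of a permutation rather than some more general nondecreasing $\{0,1,\dots,n\}$-valued function on the lattice. The reduced echelon argument sketched above handles this cleanly, but requires bookkeeping to check both existence (the form can be achieved by operations preserving the flag) and uniqueness of the pivot pattern (which is what pins down $w$ and shows that the cells are disjoint). Once that combinatorial--linear-algebraic fact is established, everything else in the proposition is formal.
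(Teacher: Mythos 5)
Your proposal is correct, and it is exactly the argument the paper intends: the proposition is left there as an exercise, and the two ingredients you use (the reduced echelon form of a flag, which shows every flag lies in a unique cell $\Omega_v$, and the definition of the Bruhat order via rank functions, which makes $\Omega_v\subseteq X_w\Leftrightarrow v\leq w$ immediate) are precisely the ones set up in the surrounding text. No gaps.
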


\begin{xca} Prove this proposition.
\end{xca}

\begin{corollary} We have the inclusion $X_v\subset X_w$ iff $v\leq w$.
\end{corollary}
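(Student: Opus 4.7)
The corollary is essentially a direct consequence of the immediately preceding proposition, which gives the stratification $X_w=\bigsqcup_{v\leq w}\Omega_v$. The plan is therefore to deduce both implications by comparing cell decompositions.

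For the backward direction, I would assume $v\leq w$ and show $X_v\subset X_w$. Using the proposition, $X_v=\bigsqcup_{u\leq v}\Omega_u$, and any $u$ with $u\leq v$ automatically satisfies $u\leq w$ by transitivity of the Bruhat order. Hence every cell appearing in the decomposition of $X_v$ also appears in the decomposition $X_w=\bigsqcup_{u\leq w}\Omega_u$, so $X_v\subset X_w$.

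For the forward direction, I would assume $X_v\subset X_w$. Then $\Omega_v\subset X_v\subset X_w=\bigsqcup_{u\leq w}\Omega_u$. The Schubert cells $\{\Omega_u\}_{u\in S_n}$ form a partition of $\Fl(n)$ (they are the $B$-orbits), so any nonempty subset of one cell cannot meet any other. Picking any point of $\Omega_v$, it must lie in some $\Omega_u$ with $u\leq w$, and disjointness of cells forces $u=v$. Therefore $v\leq w$.

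The argument is essentially bookkeeping, so no step is a serious obstacle — the only subtlety worth stating explicitly is the disjointness of Schubert cells used in the forward direction, which is precisely what turns the set-theoretic inclusion $X_v\subset X_w$ into an inequality in the Bruhat order. One could phrase the whole proof as a single line: by the proposition, $X_v\subset X_w$ is equivalent to $\{u\in S_n:u\leq v\}\subset\{u\in S_n:u\leq w\}$, which by reflexivity ($v\leq v$) is equivalent to $v\leq w$.
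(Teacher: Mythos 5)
Your argument is correct and is exactly the intended deduction: the paper states this corollary without proof as an immediate consequence of the decomposition $X_w=\bigsqcup_{v\leq w}\Omega_v$, and your two directions (transitivity of the Bruhat order for one, disjointness of the cells plus reflexivity for the other) are precisely the bookkeeping that makes it work. Nothing is missing.
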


\subsection{The cohomology ring of $\Fl(n)$ and Schubert classes}

The Schubert decomposition allows us to compute the cohomology ring of $\Fl(n)$. From the cellular decomposition of $\Fl(n)$ we see that $H^*(\Fl(n))$ is generated (as an abelian group) by the cohomology classes dual to the fundamental classes of Schubert varieties. Let us perform a twist by the longest element $w_0\in S_n$ and denote by $\sigma_w$  the class dual to the fundamental class of $X_{w_0w}$.

\begin{prop} The (integer) cohomology ring of $\Fl(n)$ is equal to
\[
 H^*(\Fl(n),\ZZ)=\bigoplus_{w\in S_n}\ZZ\sigma_w,
\]
where $\sigma_w\in H^{2\ell(w)}(\Fl(n))$.
\end{prop}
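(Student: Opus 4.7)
My plan is to deduce the proposition from the Schubert decomposition $\Fl(n) = \bigsqcup_{w \in S_n} \Omega_w$ and the standard fact that a compact complex variety with a paving by affine cells has torsion-free cohomology generated by the classes of cell closures. The key inputs are already in hand: each $\Omega_w \cong \CC^{\ell(w)}$ by Exercise~\ref{ex:length} and the surrounding discussion, so each cell has even real dimension $2\ell(w)$; and $X_w = \overline{\Omega_w} = \bigsqcup_{v \le w} \Omega_v$ by the previous proposition, so the closures give a compatible filtration by closed subvarieties.

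First I would filter $\Fl(n)$ by real-dimension: let $Y_k = \bigcup_{\ell(w) \le k} X_w$, a closed subvariety of real dimension $2k$. Then $Y_k \setminus Y_{k-1} = \bigsqcup_{\ell(w) = k} \Omega_w$, so $Y_k / Y_{k-1}$ is a wedge of $2k$-spheres, one for each $w \in S_n$ with $\ell(w) = k$. From the long exact sequences of the pairs $(Y_k, Y_{k-1})$ — equivalently, the cellular chain complex of the CW-structure built from this filtration — one inducts on $k$ to show that $H_j(Y_k, \ZZ)$ vanishes in odd $j$ and is free abelian in even $j$, with a basis given by the fundamental classes $[X_w]$ for $\ell(w) \le j/2$. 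The inductive step uses that all differentials in the cellular chain complex connect groups in degrees differing by $1$, and one of those groups is automatically zero because every cell has even dimension, so all differentials vanish.

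Applied at $k = \binom{n}{2}$, this gives $H_{2m}(\Fl(n), \ZZ) = \bigoplus_{\ell(w) = m} \ZZ \cdot [X_w]$ with all odd-degree homology vanishing. Since $\Fl(n)$ is a smooth compact complex manifold of complex dimension $N := \binom{n}{2}$, Poincar\'e duality gives an isomorphism $H^{2N - 2m}(\Fl(n), \ZZ) \cong H_{2m}(\Fl(n), \ZZ)$. Defining $\sigma_w$ to be Poincar\'e dual to the fundamental class $[X_{w_0 w}]$, which lies in $H_{2(N - \ell(w))}(\Fl(n), \ZZ)$ because $\ell(w_0 w) = N - \ell(w)$, one obtains $\sigma_w \in H^{2\ell(w)}(\Fl(n), \ZZ)$, and the $\sigma_w$ as $w$ ranges over $S_n$ form a $\ZZ$-basis of the cohomology. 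Odd-degree cohomology vanishes for the same Poincar\'e-duality reason.

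The only real obstacle is verifying that the Schubert stratification gives rise to a genuine CW structure in which the cellular chain complex computes singular homology. This is a standard result for varieties admitting an algebraic cell decomposition (a ``paving by affines''), but a careful statement requires one to check either that the attaching maps extend continuously over the closures, or, alternatively, to bypass CW complexes entirely and run the long-exact-sequence induction outlined above directly on the closed filtration $Y_k$, using that $H_*(\CC^k, \ZZ)$ is concentrated in degree $0$ and that $H_*(Y_k, Y_{k-1}; \ZZ)$ is concentrated in degree $2k$ by excision and the K\"unneth formula applied to each cell separately. I expect the latter, filtration-based, approach to be the cleanest and entirely self-contained given the results already established in the paper.
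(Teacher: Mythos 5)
Your argument is correct and is essentially the paper's own (implicit) proof: the paper likewise invokes the Schubert cell decomposition into even-dimensional affine cells, notes that all cellular differentials therefore vanish, and defines $\sigma_w$ as the class Poincar\'e dual to $[X_{w_0w}]$ so that it lands in degree $2\ell(w)$. Your filtration-by-$Y_k$ version simply spells out the standard ``paving by affines'' lemma that the paper takes for granted.
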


This explains our choice of this twist: $\ell(w_0w)=n(n-1)/2-\ell(w)$, so $\codim X_{w_0w}=\ell(w)$, and the class $[X_{w_0w}]$ has degree $2\ell(w)$.

The previous proposition allows us to compute the Poincar\'e polynomial of $\Fl(n)$:

\begin{xca}
 Show that 
\[
 P_q(\Fl(n))=\frac{(1-q)(1-q^2)\dots(1-q^n)}{(1-q)^n}.
\]
\end{xca}

\begin{hint} The proof is similar to the proof of Corollary~\ref{cor:poincarepolynomial}: suppose that $q$ is a power of a prime number and count the number of points of a flag variety $\Fl(n,\FF_q)$ over the finite field $\FF_q$.
\end{hint}

As in the case of Grassmannians, let us introduce the \emph{dual Schubert varieties}, related to the dual flag $V_\bullet'$, where $V'_i=\langle e_{n+1-i},\dots,e_n\rangle$. Let
\[
 \Omega'_w=\{U_\bullet\in\Fl(V)\mid \dim (U_p\cap V_q')=r_{w_0w}(p,q), 1\leq p,q\leq n\},
\]
and let $X_w'=\overline{\Omega_w'}$. Again, $\Omega_w'$ is an affine space, but now its \emph{codimension}, not the dimension, is equal to $\ell(w)$. Every flag $U_\bullet\in\Omega_w'$  corresponds to a unique matrix whose $(i,w(i))$-th entries are equal to 1, the coefficients \emph{below} or to the \emph{left} of $1$'s are equal to zero, and all the remaining coefficients can be arbitrary.

\begin{example} Let $w=(25413)$. Then each element of $\Omega_{w}'$ corresponds to a uniquely determined matrix of the form
\[
 \begin{pmatrix}
  0 & 1 & * & * & *\\
  0 & 0 & 0 & 0 & 1\\
  0 & 0 & 0 & 1 & 0\\
  1 & 0 & * & 0 & 0\\
  0 & 0 & 1 & 0 & 0
 \end{pmatrix}
\]
\end{example}

From the transitivity of the action of $\GL(V)$ on flags we conclude that $X_w'$ and $X_{w_0w}$ have the same dual fundamental class $\sigma_w$.

We continue to follow the same program as in the case of Grassmannians by stating the duality result.

\begin{prop} Let $v,w\in S_n$, and $\ell(v)=\ell(w)$. Then
\[
 \sigma_v\cdot \sigma_{w_0w}=\delta_{v,w}.
\]
\end{prop}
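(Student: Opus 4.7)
The plan is to realize both cohomology classes as fundamental classes of explicit Schubert subvarieties, compute the set-theoretic intersection of those subvarieties via the Bruhat decomposition, and verify transversality at the unique intersection point when $v=w$.

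First, I would rewrite $\sigma_v$ as the Poincar\'e dual of $[X_v']$ and $\sigma_{w_0 w}$ as the Poincar\'e dual of $[X_{w_0(w_0w)}]=[X_w]$, using the identification from the preceding paragraph that the twisted cycle $X_{w_0 u}$ and the opposite cycle $X_u'$ carry the same Poincar\'e dual $\sigma_u$. Since $\codim X_v'=\ell(v)$ and $\codim X_w=\binom{n}{2}-\ell(w)$, when $\ell(v)=\ell(w)$ these codimensions add up to $\binom{n}{2}=\dim\Fl(n)$, so $[X_v']\cdot[X_w]$ lands in $H^{n(n-1)}(\Fl(n))\cong\ZZ\cdot[pt]$ and it therefore suffices to count the points of the intersection with multiplicity.

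Next, I would decompose both factors using the cell structure. For $X_w$ this is already given: $X_w=\bigsqcup_{u\leq w}\Omega_u$. For $X_v'$ an analogous argument (exchanging the roles of the upper- and lower-triangular Borel subgroups) yields $X_v'=\bigsqcup_{u'\geq v}\Omega_{u'}'$, the order being reversed because the dual cells lose dimension as $\ell(u')$ grows. Combined with the basic Bruhat intersection fact $\Omega_u\cap\Omega_{u'}'\neq\varnothing\Longleftrightarrow u'\leq u$, this gives
\[
X_v'\cap X_w=\bigsqcup_{v\leq u'\leq u\leq w}(\Omega_u\cap\Omega_{u'}').
\]
If $\ell(v)=\ell(w)$ and $v\neq w$ no such chain exists, since Bruhat covering relations strictly raise length; hence the intersection is empty and $\sigma_v\cdot\sigma_{w_0w}=0$. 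If $v=w$ the only surviving term is $\Omega_w\cap\Omega_w'$, and comparing the two matrix normal forms shows that forcing both patterns simultaneously kills every star, leaving only the permutation matrix $M_w$, i.e., the single torus-fixed flag $U^w_\bullet$.

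Finally, I would verify transversality at $U^w_\bullet$: the tangent space to $\Omega_w$ is spanned by the $\ell(w)$ star-entries of the first matrix form (indexed by the inversions of $w$), while the tangent space to $\Omega_w'$ is spanned by the $\binom{n}{2}-\ell(w)$ star-entries of the second form (indexed by the non-inversions). These coordinate sets are disjoint and together constitute a system of local coordinates on $\Fl(n)$ at $U^w_\bullet$, so the two tangent spaces are complementary and the intersection is a single transverse point, contributing $1$ to the Poincar\'e pairing. The main obstacle I anticipate is establishing the combinatorial Bruhat intersection lemma $\Omega_u\cap\Omega_{u'}'\neq\varnothing\iff u'\leq u$ together with the dual cell decomposition of $X_v'$; this is where essentially all of the Bruhat-order combinatorics enters, and the remaining ingredients (transversality at a torus fixed point and the direct counting) are routine.
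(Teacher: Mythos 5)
Your proof is correct and follows essentially the same route that the paper takes in the Grassmannian case (Proposition on Poincar\'e duality for $\Gr(k,V)$) and signals for the flag case: pass to the opposite Schubert variety $X_v'$ and the standard one $X_w$ so the codimensions add up to $\dim\Fl(n)$, compute the set-theoretic intersection using the Bruhat stratification, pin down the unique intersection point as the torus-fixed flag $U^w_\bullet$ when $v=w$, and check transversality by exhibiting complementary coordinate tangent subspaces. In particular, your local-coordinate transversality argument is genuinely valid: the union of the two sets of star positions is exactly the set of positions $(i,j)$ with $i<w^{-1}(j)$, and matrices with $1$'s at $(i,w(i))$, zeroes strictly below, and free entries at those positions do give a coordinate chart of $\Fl(n)$ around $U^w_\bullet$ (two such matrices spanning the same flag must differ by a lower-triangular row operation, and the shape constraints force it to be the identity); this is the exact analogue of the ``disjoint coordinate subspaces'' phrase the paper uses for the Grassmannian.

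The one place where you rely on an ingredient not supplied by the paper is the implication $\Omega_u\cap\Omega_{u'}'\neq\varnothing\Rightarrow u'\leq u$, which you correctly flag as the crux. The paper's Grassmannian proof establishes the analogous statement directly by comparing the two row-echelon normal forms: if a flag lies in both cells, the pivot columns of the $\Omega_u$-form must weakly precede those of the $\Omega_{u'}'$-form, and translating this into rank inequalities gives precisely $u'\leq u$. This is the argument the exercise's hint (``use the description of $\Omega'_{w_0v}$ and $\Omega_w$ given above'') is pointing at, and it would make your proof self-contained within the paper. As an alternative, note that for the $v\neq w$ case you do not actually need the full lemma for cells: $X_v'\cap X_w$ is a closed $T$-stable subvariety of $\Fl(n)$, so if nonempty it contains a $T$-fixed point $U^u_\bullet$, and $U^u_\bullet\in X_w$ forces $u\leq w$ while $U^u_\bullet\in X_v'$ forces $u\geq v$; gradedness of the Bruhat order together with $\ell(v)=\ell(w)$ then yields $v=u=w$. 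Either route closes the remaining gap, and everything else in your argument is sound.
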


\begin{xca} Prove this proposition, using the description of $\Omega'_{w_0v}$ and $\Omega_{w}$ given above.
\end{xca}

Structure constants of the ring $H^*(\Fl(V),\ZZ)$ are the coefficients $c_{wv}^u$ of decompositions
\[
 \sigma_w\cdot \sigma_v=\sum c_{wv}^u \sigma_u.
\]
(they are sometimes called \emph{generalized Littlewood--Richardson coefficients}).

Similarly to Theorem~\ref{thm:LR} for Grassmannians, Kleiman's transversality theorem implies their nonnegativity by means of the same geometric argument. One would be interested in a \emph{combinatorial} proof of their nonnegativity, analogous to the Littlewood--Richardson problem: how to describe sets of cardinalities $c_{wv}^u$? What do such sets index? This problem is open. One of the recent attempts to solve it is given in the unpublished preprint \cite{CoskunPreprint} by Izzet Co\c{s}kun; it uses the so-called \emph{Mondrian tableaux\footnote{Piet Mondrian was a Dutch artist, known for his abstract compositions of lines and colored rectangles; the combinatorial objects introduced by Co\c{s}kun for the study of Schubert varieties resemble Mondrian's paintings.}}.

For Grassmannians we had the Pieri rule which allowed us to multiply Schubert classes by some special classes. A similar formula holds for flag varieties, but instead of special classes it involves Schubert divisors, i.e. Schubert varieties of codimension 1. There are $n-1$ of them; they correspond to simple transpositions $s_1,\dots,s_{n-1}$. Recall that the simple transposition $s_i\in S_n$ exchanges $i$ with $i+1$ and leaves all the remaining elements fixed. We will also need arbitrary transpositions; denote a transposition exchanging $j$ with $k$ by $t_{ij}$.

\begin{theorem}[Chevalley--Monk formula] For each permutation $w\in S_n$ and each $i<n$,
\[
 \sigma_w\cdot\sigma_{s_i}=\sum_{j\leq i<k,\ell(wt_{ij})=\ell(w)+1} \sigma_{wt_{jk}},
\]
where the sum is taken over all transpositions $t_{jk}$ which increase the length of $w$ by $1$, and $j\leq i<k$.
\end{theorem}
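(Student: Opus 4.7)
My plan is to compute each coefficient in the expansion of $\sigma_w\cdot\sigma_{s_i}$ via the Poincar\'e duality pairing on $\Fl(n)$. For each permutation $u$ with $\ell(u)=\ell(w)+1$, the coefficient of $\sigma_u$ is the triple intersection number $\langle\sigma_w\sigma_{s_i},\sigma_{w_0 u}\rangle$, which by Kleiman's transversality (Theorem~\ref{thm:kleiman}) equals the number of points in a transversal intersection $X_{w_0 w}\cap(g\cdot X_{w_0 s_i})\cap(h\cdot X_u)$ for generic $g,h\in\GL(V)$. I must show this count equals $1$ when $u=wt_{jk}$ with $j\leq i<k$ and $0$ otherwise.

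The key geometric input is that $\sigma_{s_i}$ is a pullback from the Grassmannian $\Gr(i,n)$. A direct computation shows that the rank function $r_{w_0 s_i}(p,q)$ agrees with $r_{w_0}(p,q)$ except at $(p,q)=(i,n-i)$, where it equals $1$ rather than $0$; hence $X_{w_0 s_i}=\{U_\bullet:\dim(U_i\cap V_{n-i})\geq 1\}=\pi_i^{-1}(Y_1)$, where $\pi_i\colon\Fl(n)\to\Gr(i,n)$ is the projection $U_\bullet\mapsto U_i$ and $Y_1\subset\Gr(i,n)$ is the Schubert divisor for partition $(1)$ with respect to $V_\bullet$. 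Since $\pi_i$ is $\GL(V)$-equivariant, translating by $g$ replaces $V_{n-i}$ by a generic $(n-i)$-plane $L\subset V$. The coefficient of $\sigma_u$ thus becomes the number of flags in the Richardson variety $R_{w,u}:=X_{w_0 w}\cap(h\cdot X_u)$ whose $i$-plane $U_i$ meets $L$.

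By the standard theory of Richardson varieties (the flag analog of the discussion preceding Theorem~\ref{thm:LR}), $R_{w,u}$ is nonempty precisely when $w\leq u$ in Bruhat order, and in the case $\ell(u)-\ell(w)=1$ it is an irreducible projective curve with exactly two $T$-fixed points $U^w_\bullet$ and $U^u_\bullet$. These project under $\pi_i$ to the coordinate subspaces spanned by $e_{w(1)},\dots,e_{w(i)}$ and $e_{u(1)},\dots,e_{u(i)}$, which coincide iff $\{w(1),\dots,w(i)\}=\{u(1),\dots,u(i)\}$. Writing the Bruhat cover as $u=wt_{jk}$ with $j<k$, a direct check shows this equality of sets holds iff $\{j,k\}\subseteq\{1,\dots,i\}$ or $\{j,k\}\subseteq\{i+1,\dots,n\}$, that is, iff the transposition does \emph{not} cross the $i$-barrier.

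The main obstacle is upgrading this endpoint data to the full geometric behavior of $\pi_i$ on $R_{w,u}$: in the ``bad'' case one must show $\pi_i$ is constant on all of $R_{w,u}$ (so the intersection with $\pi_i^{-1}(Y_1(L))$ is empty for generic $L$), while in the ``good'' case $j\leq i<k$ one must show $\pi_i$ sends $R_{w,u}$ birationally onto a Pl\"ucker-degree-one curve (a Schubert line) in $\Gr(i,n)$, which then meets a generic $Y_1(L)$ transversally in exactly one point. The cleanest route is via the classification of $T$-invariant curves in $\Fl(n)$: each joins two $T$-fixed points differing by a single reflection and is isomorphic to $\PP^1$, and $R_{w,u}$ coincides with the unique such $T$-curve connecting $U^w_\bullet$ and $U^u_\bullet$; its image under $\pi_i$ is then determined by the $T$-weights on the Pl\"ucker coordinates. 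Granting this identification, the Pieri rule on $\Gr(i,n)$ applied to a line versus a hyperplane section finishes the count.
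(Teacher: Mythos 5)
The paper itself does not prove the Chevalley--Monk formula; it states it and refers the reader to Manivel~\cite[Sec.~3.6.3]{Manivel98}, so there is no in-paper proof to compare against. Your proposal is a genuinely geometric alternative to the algebraic route (Schubert polynomials and divided differences) taken in that reference, and its skeleton is correct. The reduction via Poincar\'e duality and Kleiman transversality to a count $\#\bigl(X_{w_0 w}\cap g\,X_{w_0 s_i}\cap h\,X_u\bigr)$ is right; the rank-function computation identifying $X_{w_0 s_i}$ with $\pi_i^{-1}(Y_1)$ for the Pl\"ucker-linear Schubert divisor $Y_1\subset\Gr(i,n)$ is clean (the rank function of $w_0 s_i$ indeed exceeds that of $w_0$ only at $(i,n-i)$); and the Richardson curve $R_{w,u}$ attached to a Bruhat cover $u=wt_{jk}$ is the right object. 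The one gap you yourself flag --- that $R_{w,u}$ \emph{is} the $T$-invariant $\PP^1$ joining $U^w_\bullet$ and $U^u_\bullet$ --- can be closed exactly as you suggest: $R_{w,u}$ is irreducible, projective, one-dimensional, and its $T$-fixed points are the $v$ with $w\le v\le u$, which for a cover are only $w$ and $u$; hence it is the closure of a dense one-dimensional $T$-orbit, i.e.\ the unique $T$-curve joining them. Writing that curve explicitly as $U_m(t)=\langle e_{w(1)},\dots,e_{w(j-1)},\,e_{w(j)}+te_{w(k)},\,e_{w(j+1)},\dots,e_{w(m)}\rangle$ for $j\le m<k$ (and $U_m(t)=U_m^w$ otherwise) makes the dichotomy transparent: $\pi_i$ is constant on the curve when $i<j$ or $i\ge k$, and sends it isomorphically onto a Pl\"ucker line when $j\le i<k$, so the count against a generic hyperplane section of $\Gr(i,n)$ is $0$ or $1$ accordingly. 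The argument is correct; what remains is to import the standard Richardson-variety facts (irreducibility, enumeration of $T$-fixed points) from the literature, as you indicate.
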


We will not prove this theorem here; the reader may consider it as a nontrivial exercise or find its proof, for instance, in~\cite[Sec.~3.6.3]{Manivel98}.

\subsection{Fundamental example: $\Fl(3)$} 

Let $n=3$. A flag of vector subspaces in $\CC^3$ can be viewed as a flag of \emph{projective} subspaces in $\PP^2$, i.e., a pair $(p,\ell)$ consisting of a point and a line, such that $p\in \ell$. Let $(p_0,\ell_0)$ be the projectivization of the standard flag $V_\bullet$, i.e., $p_0=[\langle e_1\rangle]$ and $\ell_0=[\langle e_1,e_2\rangle]$. Here we list all Schubert varieties in the case of $\Fl(3)$. There are $3!=6$ of them. For each $w\in S_3$, we draw the standard flag (we will also call it \emph{fixed}) by a solid line and a black dot, and a generic element $(p,\ell)\in X_w$ (sometimes referred to as ``the moving flag'') by a dotted line and a white dot. For each $w$ we compute the permutation $w_0w$; the corresponding Schubert class is $[X_w]=\sigma_{w_0w}$.

\begin{itemize}
 \item $w=(321)$. This is the generic situation: there are no relations on the fixed and the moving flag, $X_{(321)}=\Fl(3)$. The corresponding Schubert class is $\sigma_{id}=1\in H^*(\Fl(3))$.

\item $w=(312)$. In this case $p_0\in \ell$. In the language of vector spaces this would mean that $U_2\supset V_2$, and $U_1$ can be arbitrary. $\dim X_{(312)}=2$. The twisted permutation $w_0w=(213)=s_1$ is the first simple transposition.

\item $w=(231)$: this is the second two-dimensional Schubert variety (or a \emph{Schubert divisor}). The condition defining it is $p\in \ell_0$, and $w_0w=(132)=s_2$ is the second simple transposition.

\item $w=(132)$: in this case the points $p_0=p$ collide. $w_0w=(231)=s_1s_2$. The set of flags $(p,\ell)$ such that $p=p_0$ forms a $B$-stable curve in the flag variety isomorphic to $\PP^1$.

\item $w=(213)$: this condition says that $\ell=\ell_0$. This is the second $B$-stable curve, also isomorphic to $\PP^1$; its permutation is $w_0w=(312)=s_2s_1$.

\item $w=(123)$: this is the unique zero-dimensional Schubert variety, given by the conditions $p=p_0$ and $\ell=\ell_0$. The twisted permutation $w_0w=(312)=s_1s_2s_1=s_2s_1s_2$ is the longest one, and the corresponding Schubert class $\sigma_{w_0}$ is the class of a point.
\end{itemize}

\begin{figure}[h!]
 \begin{tikzpicture}
  \draw[ultra thick] (-2,0)--(2,0);
  \draw[fill] (0,0) circle [radius=0.1];
  \draw[dotted, ultra thick] (2,1.5)--(-2,-0.5);
  \draw[fill=white] (1,1) circle [radius=0.1];
  \node at (0,-1) {$[X_{(321)}]=\sigma_{id}$};
 \end{tikzpicture}
\qquad 
 \begin{tikzpicture}
  \draw[ultra thick] (-2,0)--(2,0);
  \draw[fill] (0,0) circle [radius=0.1];
  \draw[dotted, ultra thick] (1.5,1.5)--(-0.5,-0.5);
  \draw[fill=white] (1,1) circle [radius=0.1];
  \node at (0,-1) {$[X_{(312)}]=\sigma_{s_1}$};
 \end{tikzpicture}\\
\begin{tikzpicture}
  \draw[ultra thick] (-2,0)--(2,0);
  \draw[fill] (0,0) circle [radius=0.1];
  \draw[dotted, ultra thick] (2,1.5)--(-2,-0.5);
  \draw[fill=white] (-1,0) circle [radius=0.1];
  \node at (0,-1) {$[X_{(231)}]=\sigma_{s_2}$};
 \end{tikzpicture}\qquad
\begin{tikzpicture}
  \draw[ultra thick] (-2,0)--(2,0);
  \draw[dotted, ultra thick] (2,2)--(-0.5,-0.5);
  \draw[fill=white] (0.05,0) circle [radius=0.1];
  \draw[fill] (-0.05,0) circle [radius=0.1];
  \node at (0,-1) {$[X_{(312)}]=\sigma_{s_1s_2}$};
 \end{tikzpicture}
\medskip
\begin{tikzpicture}
  \draw[ultra thick] (-2,0)--(2,0);
  \draw[dotted, ultra thick] (-2,0.07)--(2,0.07);
  \draw[fill=white] (-1,0) circle [radius=0.1];
  \draw[fill] (0.0,0) circle [radius=0.1];
  \node at (0,-1) {$[X_{(213)}]=\sigma_{s_2s_1}$};
  \node at (0,1) {~};
 \end{tikzpicture}
\qquad
\begin{tikzpicture}
  \draw[ultra thick] (-2,0)--(2,0);
  \draw[dotted, ultra thick] (-2,0.07)--(2,0.07);
  \draw[fill=white] (0.05,0) circle [radius=0.1];
  \draw[fill] (-0.05,0) circle [radius=0.1];
  \node at (0,-1) {$[X_{(321)}]=\sigma_{s_1s_2s_1}$};
  \node at (0,1) {~};
 \end{tikzpicture}
\caption{Schubert varieties in $\Fl(3)$}\label{fig:flags3}
\end{figure}

Note that the Bruhat order can be seen on these pictures: $v\leq w$ if and only if a moving flag corresponding to $w$ can be degenerated to a moving flag corresponding to $v$, i.e., if $\Omega_v\subset\overline{\Omega_w}$.

These pictures allow us to compute the products of certain Schubert classes. 

\begin{example}\label{ex:square} Let us compute $\sigma_{s_1}^2$. This means that we have two fixed flags $(p_0,\ell_0)$ and $(\tilde p_0,\tilde\ell_0)$ in a general position with respect to each other, and we are looking for moving flags $(p,\ell)$ satisfying the conditions for $\sigma_{s_1}$, namely, $p_0\in\ell$ and $\tilde p_0\in \ell$. Each of these Schubert varieties, which we denote by $X^{(312)}$ and $\widetilde X^{(312)}$ is of codimension 1, so their intersection has the expected codimension 2. Indeed, such flags are given by the condition $\ell=\langle p_0,\tilde p_0\rangle$. This means that the position of the line $\ell$ is prescribed. But this is exactly the condition defining the Schubert class $\sigma_{s_2s_1}$ (cf.~Figure~\ref{fig:schubproduct}). 

It remains to show that the intersection of $X_{(312)}$ and $\widetilde X_{(312)}$ is transversal. Informally this can be seen as follows: the tangent vectors to each of the Schubert varieties correspond to moving flags $(p',\ell')$ which are ``close'' to the flag $(p,\ell)$ and satisfy the conditions $p_0\in\ell'$ and $\tilde p_0\in\ell'$ respectively. So the tangent space to each of these Schubert varieties at $(p,\ell)$ is two-dimensional, with natural coordinates corresponding to infinitesimal shifts of $p$ along $\ell$ and infinitesimal rotations of $\ell$ along $p_0$ and $\tilde p_0$, respectively. The intersection of these two subspaces is a line corresponging to the shifts of $p$ along $\ell$, hence one-dimensional. So $\sigma_{s_1}^2=\sigma_{s_2s_1}$.
\begin{figure}
\begin{tikzpicture}
  \draw[ultra thick] (-2,0)--(2,0);
  \draw[fill] (0,0) circle [radius=0.1];
  \draw[dotted, ultra thick] (2,2)--(-1,-1);
  \draw[ultra thick] (1.5,-1)--(1.5,2);
  \draw[fill] (1.5,1.5) circle [radius=0.1];
  \draw[fill=white] (1,1) circle [radius=0.1];
  \node[below] at (0,0) {$p_0$};
  \node[right] at (1.5,1.5) {$\tilde p_0$};
  \node[below] at (1,1) {$p$};
  \node[right] at (1.5,-0.5) {$\tilde\ell_0$};
  \node[below] at (-1.5,0) {$\ell_0$};
  \node[below] at (-0.5,-0.5) {$\ell$};
 \end{tikzpicture} 
\caption{$\sigma_{s_1}^2=\sigma_{s_2s_1}$}\label{fig:schubproduct}
\end{figure}
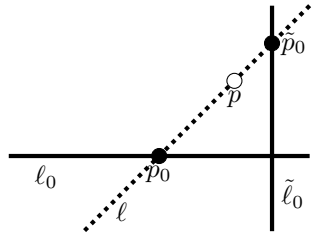
\end{example}

\begin{xca}
 Show in a similar way that $\sigma_{s_2}^2=\sigma_{s_1s_2}$ and $\sigma_{s_1}\sigma_{s_2}=\sigma_{s_1s_2}+\sigma_{s_2s_1}$.
\end{xca}

\subsection{Borel presentation and Schubert polynomials}

There is another presentation of the cohomology ring of a full flag variety, due to Armand Borel~\cite{Borel53}. We will give its construction without proof; details can be found in \cite[Sec.~3.6.4]{Manivel98}.

Let $\Fl(n)$ be a full flag variety. Consider $n$ tautological vector bundles $\cV_1,\dots,\cV_n$ of ranks $1,\dots,n$. By definition, the fiber of $\cV_i$ over any point $V_\bullet$ is $V_i$. Since $\cV_{i-1}$ is a subbundle of $\cV_i$, we can take the quotient line bundle $\ccL_i=\cV_i/\cV_{i-1}$. 

\begin{theorem}\label{thm:borel} Consider a morphism from the polynomial ring $\ZZ[x_1,\dots,x_n]$ in $n$ variables into $H^*(\Fl(n),\ZZ)$, taking each variable $x_i$ to the negative first Chern class of $\ccL_i$:
\[
 \varphi\colon \ZZ[x_1,\dots,x_n]\to H^*(\Fl(n),\ZZ),\qquad x_i\mapsto -c_1(\ccL_i).
\]
Then $\varphi$ is a surjective morphism of graded rings, and $\Ker\varphi=I$ is the ideal generated by all symmetric polynomials in $x_1,\dots,x_n$ with zero constant term.
\end{theorem}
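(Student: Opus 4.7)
The plan is to verify the three claims in sequence: $\varphi$ is a well-defined graded ring homomorphism (clear, as we are specifying it on generators of a polynomial ring), $I \subseteq \Ker \varphi$, and then equality $\Ker\varphi = I$ combined with surjectivity.

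\textbf{Step 1: Containment $I\subseteq \Ker\varphi$.} The crucial input is that $\cV_n$ is the \emph{trivial} rank-$n$ bundle $V\times\Fl(n)$, so its total Chern class equals $1$. Using the short exact sequences
\[
0\to\cV_{i-1}\to \cV_i\to \ccL_i\to 0,\qquad i=1,\dots,n,
\]
the Whitney product formula gives $c(\cV_n)=\prod_{i=1}^n c(\ccL_i)=\prod_{i=1}^n(1-x_i)$ in $H^*(\Fl(n))$, where I write $x_i=-c_1(\ccL_i)$. Comparing degrees, each elementary symmetric polynomial $e_k(x_1,\dots,x_n)$ maps to $0$ for $k\geq 1$. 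Since the $e_k$'s generate the ideal $I$ (by the fundamental theorem of symmetric functions applied to the augmentation ideal), this shows $I\subseteq \Ker\varphi$. Hence $\varphi$ descends to $\bar\varphi\colon R\to H^*(\Fl(n),\ZZ)$, where $R:=\ZZ[x_1,\dots,x_n]/I$ is the \emph{coinvariant algebra}.

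\textbf{Step 2: Rank comparison.} Both sides are finitely generated free abelian groups of rank $n!$. For $H^*(\Fl(n),\ZZ)$ this follows from the Schubert cell decomposition: there are $n!$ even-dimensional cells (one per $w\in S_n$), so all differentials vanish and the Schubert classes $\sigma_w$ form a $\ZZ$-basis. For $R$, the classical fact is that $\ZZ[x_1,\dots,x_n]$ is free of rank $n!$ over its $S_n$-invariant subring $\ZZ[e_1,\dots,e_n]$; quotienting by $I=(e_1,\dots,e_n)$ therefore yields a free $\ZZ$-module of rank $n!$. (Explicitly, the Artin monomials $x_1^{a_1}\cdots x_n^{a_n}$ with $0\le a_i\le n-i$ form a basis.)

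\textbf{Step 3: Surjectivity of $\bar\varphi$.} Once surjectivity is established, a surjection between free $\ZZ$-modules of equal finite rank is automatically an isomorphism, which finishes the proof. I would prove surjectivity by induction on $n$ via the projection
\[
\pi\colon \Fl(n)\longrightarrow \Fl(n-1),\qquad (U_\bullet)\mapsto (U_1\subset\dots\subset U_{n-1}),
\]
which is a $\PP^{n-1}$-bundle: the fiber over $(U_1,\dots,U_{n-1})$ is the projectivization of the quotient $V/U_{n-1}$, parametrizing choices of $U_{n-1}\subset U_n$ of codimension one (well, really choosing $U_1$; but either projection $\Fl(n)\to \Fl(n-1)$ works, and dually one can use $\Fl(n)\to \PP^{n-1}$ forgetting all but $U_1$). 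The bundle carries a tautological line bundle whose first Chern class is one of the $x_i$'s, and the remaining $x_j$'s come from the base via the induction hypothesis. By the Leray--Hirsch theorem applied to this projective bundle, $H^*(\Fl(n),\ZZ)$ is generated as an $H^*(\Fl(n-1),\ZZ)$-module by powers of this Chern class; combining with the inductive statement on $\Fl(n-1)$ shows that $\bar\varphi$ hits every class.

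\textbf{Main obstacle.} Steps 1 and 2 are essentially bookkeeping: Whitney formula plus counting cells versus Artin monomials. The main technical point is Step 3, namely that the Chern classes of the line bundles $\ccL_i$ actually generate the cohomology ring. The induction via a projective bundle and Leray--Hirsch is the cleanest route, but one must be slightly careful to identify the tautological $\mathcal O(1)$ of $\pi$ with an appropriate $\ccL_i$ (or its dual) so that the generators produced by Leray--Hirsch match the image of $\varphi$. An alternative, which avoids Leray--Hirsch, is to verify directly that each Schubert class $\sigma_w$ is the image of a polynomial (the Schubert polynomial $\mathfrak S_w$ of Lascoux--Sch\"utzenberger); this is more combinatorial but recovers a stronger statement, namely an explicit polynomial lift of the basis $\{\sigma_w\}$.
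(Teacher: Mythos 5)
The paper does not actually prove Theorem~\ref{thm:borel}; it states the result and refers to \cite[Sec.~3.6.4]{Manivel98}. Your argument is the standard proof found there and in Borel's original treatment, and its architecture is sound: Steps 1 and 2 are correct as written (Whitney formula on the filtration $\cV_1\subset\dots\subset\cV_n$ with $\cV_n$ trivial kills the $e_k$'s, and both the coinvariant algebra and $H^*(\Fl(n),\ZZ)$ are free of rank $n!$, so surjectivity of $\bar\varphi$ forces injectivity).

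The one genuine problem is the fibration you name in Step~3. The assignment $(U_\bullet)\mapsto(U_1\subset\dots\subset U_{n-1})$ is not a map to $\Fl(n-1)$, the flag variety of a fixed $(n-1)$-dimensional space: since $U_n=V$ is forced, forgetting it is a bijection of $\Fl(n)$ onto the variety of partial flags $(U_1\subset\dots\subset U_{n-1})$ in $\CC^n$, i.e.\ onto $\Fl(n)$ itself, with point fibers rather than $\PP^{n-1}$'s. The correct inductive structure is either (a) the tower of partial flag varieties $\Fl(1,\dots,k;n)\to\Fl(1,\dots,k-1;n)$, where each step is the projectivization $\PP(V/\cV_{k-1})$ of a rank-$(n-k+1)$ bundle whose tautological sub-line-bundle is exactly $\ccL_k=\cV_k/\cV_{k-1}$ --- the projective bundle theorem then says $H^*$ of each stage is generated over the previous one by $c_1(\ccL_k)$, and iterating gives surjectivity; or (b) the fibration $\Fl(n)\to\PP(V)$, $(U_\bullet)\mapsto U_1$, whose fiber over $U_1$ is $\Fl(V/U_1)$, to which Leray--Hirsch applies because the restrictions of the classes $\varphi(x_2),\dots,\varphi(x_n)$ to a fiber generate its cohomology by induction. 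Your hedge ``either projection works'' does not rescue the statement as written, because forgetting only $U_{n-1}$ (or only $U_1$) yields a $\PP^1$-bundle over a partial flag variety of $\CC^n$, again not over $\Fl(n-1)$. Once the fibration is set up correctly, the rest of your Step~3 and the rank argument go through, and the alternative you mention (exhibiting Schubert polynomials as explicit preimages of the $\sigma_w$) is indeed another valid route to surjectivity.
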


This presentation gives rise to a natural question: if $H^*(\Fl(n))$ is the quotient of a polynomial ring, how to find polynomials in $\ZZ[x_1,\dots,x_n]$ representing Schubert classes? Of course, a preimage of $\sigma_w$ in $\ZZ[x_1,\dots,x_n]$ is not uniquely defined: this is a coset modulo the ideal $I$. Let us pick a ``lift'' $M$ of $H^*(\Fl(n))$ into $\ZZ[x_1,\dots,x_n]$ as follows. For two monomials $x^I=x_1^{i_1}\dots x_n^{i_n}$ and $x^J=x_1^{j_1}\dots x_n^{j_n}$ we will say that $x^I$ is \emph{dominated} by $x^J$ iff $i_\alpha\leq j_\alpha$ for each $\alpha\in\{1,\dots,n\}$. 

Let
\[
 M=\langle x_1^{i_1}x_2^{i_2}\dots x_n^{i_n}\mid 0\leq i_k\leq n-k\rangle_\ZZ
\]
be the $\ZZ$-span of all monomials dominated by the ``staircase monomial'' $x_1^{n-1}x_2^{n-2}\dots x_{n-1}$. In particular, all monomials in $M$ do not depend on $x_n$. Then $M$ is a free abelian subgroup of rank $n!$, and 
\[
 \ZZ[x_1,\dots,x_n]=I\oplus M
\]
as abelian groups. So for each element $y\in H^*(\Fl(n))$ there exists a unique $x\in M$ such that $\varphi(x)=y$.

\begin{definition} Let $\fS_w(x_1,\dots,x_{n-1})$ be a polynomial from $M$ such that $\varphi(\fS_w)=\sigma_w$. Then $\fS_w$ is called the \emph{Schubert polynomial} corresponding to $w$.
\end{definition}

\begin{example} $\fS_{id}=1$, and $\fS_{w_0}=x_1^{n-1}x_2^{n-2}\dots x_{n-1}$.
\end{example}

This definition may seem unnatural at the first glance, since it depends on the choice of $M$. However, Schubert polynomials defined in such a way satisfy the following \emph{stability property}.

Consider a natural embedding $\CC^n\hookrightarrow\CC^{n+1}$ whose image consists of vectors whose last coordinate is zero. It defines an embedding of full flag varieties $\iota_n\colon \Fl(n)\to\Fl(n+1)$. This map defines a surjective map of cohomology rings: $\iota_n^*\colon H^*(\Fl(n+1))\to H^*(\Fl(n))$. 

One can easily see what happens with Schubert classes under this map. Let $w\in S_{n}$. Denote by $w\times 1\in S_{n+1}$  the image of $w$ under the natural embedding $S_n\hookrightarrow S_{n+1}$. Then
\[
 \iota^*(\sigma_v)=\begin{cases} \sigma_w & \text{ if } v=w\times 1,\\ 0 & \text{ otherwise}.
                                       \end{cases}
\]

Let $M_n\subset \ZZ[x_1,\dots,x_n]$ and $M_{n+1}\subset \ZZ[x_1,\dots,x_{n+1}]$ be the free abelian subgroups spanned by monomials dominated by the corresponding staircase monomials $x_1^{n-1}x_2^{n-2}\dots x_{n-1}$ and $x_1^n x_2^{n-1}\dots x_n$ (note that the monomials in $M_{n}$ and $M_{n+1}$ do \emph{not} depend on $x_n$ and $x_{n+1}$, respectively). There is a surjective map
\[
 \mu_n\colon M_{n+1}\to M_n,
\]
\[
  \mu_n(x_1^{i_1}\dots x_n^{i_n})=\begin{cases} x_1^{i_1}\dots x_n^{i_n}, & i_k<n-k\text{ for each }k\leq n;\\
                                 0 &\text{otherwise}.
                                \end{cases}
\]
(In particular, every monomial containing $x_{n}$ is always mapped to zero). The diagram 
\[
 \xymatrix{ 
M_{n+1}\ar[r]\ar[d]_{\mu_n} & H^*(\Fl(n+1))\ar[d]^{\iota^*_n}\\
M_n \ar[r] & H^*(\Fl(n))
}
\]
is commutative.

We can consider the colimit $\lim\limits_{\leftarrow} M_{n}=\ZZ[x_1,x_2,\dots]$. The Schubert polynomial $\fS_w(x_1,x_2,\dots)\in\ZZ[x_1,x_2,\dots]$ is then the unique polynomial which is mapped to $\sigma_w$ for $n$ sufficiently large.

\subsection{Divided difference operators, pipe dreams and the Fomin--Kirillov theorem}\label{ssec:pipedreams}
The method of computation of Schubert polynomials (as well as the definition of this notion itself) was given by Lascoux and Sch\"utzenberger \cite{LascouxSchutzenberger82}. Essentially the same construction appeared several years before in the paper   \cite{BernsteinGelfandGelfand73} by J.~Bernstein, I.~Gelfand and S.~Gelfand. It is as follows.

Consider the ring $\ZZ[x_1,\dots,x_n]$. Define the \emph{divided difference operators} $\partial_1,\dots,\partial_{n-1}$:
\[
 \partial_i(f)=\frac{f(x_1,\dots,x_n)-f(x_1,\dots,x_{i+1},x_i,\dots,x_n)}{x_i-x_{i+1}}.
\]

\begin{xca}\label{xca:welldef} Show that: 
\begin{enumerate}
             \item $\partial_i$ takes a polynomial into a polynomial;
\item $\partial_i^2=0$;
\item $\partial_i\partial_j=\partial_j\partial_i$ for $|i-j|>1$;
\item $\partial_i\partial_{i+1}\partial_i=\partial_{i+1}\partial_{i}\partial_{i+1}$.
            \end{enumerate}
\end{xca}

Let $w\in S_n$ be a permutation. Let us multiply it by $w_0$ from the left and  consider a presentation of the resulting permutation as the product of simple transpositions:
\[
 w_0w=s_{i_1}s_{i_2}\dots s_{i_r}.
\]
(some of the $i_k$'s can be equal to each other).  Such a presentation is called a \emph{reduced decomposition} if the number of factors is the smallest possible, i.e., equal to the length $\ell=\ell(w_0 w)$ of the permutation $w_0w$.

\begin{theorem}[\cite{LascouxSchutzenberger82},  \cite{BernsteinGelfandGelfand73}]\label{thm:divdiff} For such a $w\in S_n$,
\[
 \fS_{w}(x_1,\dots,x_{n-1})=\partial_{i_\ell}\dots\partial_{i_2}\partial_{i_1}(x_1^{n-1}x_2^{n-2}\dots x_{n-1}),
\]
where $w_0 w=s_{i_1}s_{i_2}\dots s_{i_\ell}$ is a reduced decomposition of $w_0 w$.
\end{theorem}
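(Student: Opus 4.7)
The plan is to establish the theorem in four steps: well-definedness of the right-hand side, descent of $\partial_i$ to cohomology, a key lemma describing its action on Schubert classes, and finally an induction on $\ell(w_0 w)$.

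First, I would verify that $\partial_{w_0 w} := \partial_{i_\ell}\cdots\partial_{i_1}$ is independent of the choice of reduced decomposition. By Exercise~\ref{xca:welldef}, the operators $\partial_i$ satisfy the braid relations and the commutation relations for $|i-j|>1$. By Matsumoto's theorem, any two reduced decompositions of a permutation $u\in S_n$ are connected by sequences of these moves, so the composite operator $\partial_u$ is intrinsic to $u$. Next, I would check that the $\partial_i$ descend to the quotient $\ZZ[x_1,\dots,x_n]/I = H^*(\Fl(n))$. Using the twisted Leibniz identity $\partial_i(fg) = \partial_i(f)g + s_i(f)\partial_i(g)$ together with $\partial_i(h)=0$ for symmetric $h$, one sees that $\partial_i(g\cdot h) = \partial_i(g)\cdot h \in I$ whenever $h$ is a symmetric polynomial with zero constant term (the remaining cases reduce to this), so $\partial_i(I)\subseteq I$.

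The central claim is the following key lemma on the descended operator:
\[
\partial_i(\sigma_v) = \begin{cases} \sigma_{vs_i}, & \ell(vs_i) = \ell(v)-1,\\ 0, & \text{otherwise.}\end{cases}
\]
To prove it I would use the $\PP^1$-bundle $\pi_i\colon \Fl(n)\to \Fl(n;\hat\imath)$ obtained by forgetting the $i$-th subspace, and identify the algebraic $\partial_i$ with the geometric push-pull $\pi_i^*\circ(\pi_i)_*$. One then analyzes the restriction $\pi_i|_{X_v}$: its fiber is either a point (when $\ell(vs_i)>\ell(v)$, forcing $(\pi_i)_*[X_v]=0$) or a $\PP^1$ mapping birationally onto the image (when $\ell(vs_i)<\ell(v)$, in which case $(\pi_i)_*[X_v]=[X'_{vs_i}]$ and $\pi_i^*[X'_{vs_i}] = [X_{vs_i}]$). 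Matching the algebraic $\partial_i$ with this geometric operator via the Borel presentation (i.e., computing the pushforward of powers of $c_1(\ccL_i)$ using the exact sequence $0\to \ccL_{i-1}\to \ccL_i|_{\mathrm{fiber}}\to \ccL_{i+1}\to 0$ along fibers) is the main technical content, and I expect this to be the primary obstacle.

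Granted the lemma, I would run induction on $k=\ell(w_0 w)$, with the chain $v_0 = w_0,\ v_k = w_0 s_{i_1}\cdots s_{i_k}$. Since $s_{i_1}\cdots s_{i_k}$ is a reduced prefix, $\ell(v_k) = \ell(w_0)-k$, so the length-drop hypothesis $\ell(v_{k-1}s_{i_k}) = \ell(v_{k-1})-1$ of the lemma is satisfied at every step. Thus $\varphi(\partial_{i_k}\cdots\partial_{i_1}(x_1^{n-1}\cdots x_{n-1})) = \sigma_{v_k}$, and for $k=\ell$ this equals $\sigma_{w_0\cdot(w_0 w)} = \sigma_w$. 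The base case requires $\varphi(x_1^{n-1}x_2^{n-2}\cdots x_{n-1}) = \sigma_{w_0}$; this follows because $\sigma_{w_0}$ is a generator of the top cohomology $H^{n(n-1)}(\Fl(n))\cong\ZZ$, the polynomial has the correct degree $\binom{n}{2}$, and a direct Chern class calculation (or localization at the fixed point $U_\bullet^{w_0}$ of the torus) shows the image is nonzero and hence the class of a point.

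Finally, to conclude that $\partial_{w_0 w}(x_1^{n-1}\cdots x_{n-1})$ equals $\fS_w$ (not merely represents $\sigma_w$ modulo $I$), I must verify that this polynomial lies in the staircase span $M$. This reduces to showing $\partial_i(M)\subseteq M$: writing $\partial_i(x_1^{a_1}\cdots x_n^{a_n})$ explicitly and using $a_k\leq n-k$ together with $a_n = 0$, one checks that every resulting monomial still satisfies the staircase bound. Combined with the inductive identification in $H^*(\Fl(n))$, this forces $\partial_{w_0 w}(x_1^{n-1}\cdots x_{n-1})$ to be the unique element of $M$ representing $\sigma_w$, namely $\fS_w$.
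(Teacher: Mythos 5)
The paper itself gives no proof of Theorem~\ref{thm:divdiff}: it states the result with citations to Lascoux--Sch\"utzenberger and Bernstein--Gelfand--Gelfand and only records the independence of the reduced word in the following remark. Your proposal is the standard BGG/Demazure argument from those sources, and its architecture is sound: Matsumoto's theorem plus Exercise~\ref{xca:welldef} for well-definedness, the twisted Leibniz rule to show $\partial_i(I)\subseteq I$, the key lemma $\partial_i\sigma_v=\sigma_{vs_i}$ for $\ell(vs_i)=\ell(v)-1$ (and $0$ otherwise) proved via the $\PP^1$-fibration $\pi_i$, the induction along the reduced word of $w_0w$, and---crucially---the closing verification that $\partial_i(M)\subseteq M$. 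Without that last step you would only have identified the cohomology class $\sigma_w$, not the polynomial $\fS_w\in M$; most sketches omit it, and your computation with the staircase bounds does go through.

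Two local slips should be repaired. First, in the fiber analysis you have the two cases interchanged: when the generic fiber of $\pi_i$ restricted to the (opposite) Schubert variety representing $\sigma_v$ is a \emph{point}, the map is generically finite onto its image and the pushforward is \emph{nonzero}, equal to the class of the image (this is the case $\ell(vs_i)<\ell(v)$); when the generic fiber is $\PP^1$, the image drops dimension and the pushforward vanishes (the case $\ell(vs_i)>\ell(v)$). As written, ``a point \dots forcing $(\pi_i)_*[X_v]=0$'' and ``a $\PP^1$ mapping birationally onto the image'' are each self-contradictory, although the length conditions you attach to the zero and nonzero outcomes---and hence the lemma itself---are correct. (The exact sequence along the fiber should also read $0\to\ccL_i\to\cV_{i+1}/\cV_{i-1}\to\ccL_{i+1}\to 0$, not with $\ccL_{i-1}$ as the sub.) Second, in the base case ``nonzero and hence the class of a point'' is a non sequitur: a nonzero element of $H^{n(n-1)}(\Fl(n))\cong\ZZ$ is only an integer multiple of $[pt]$, so you must actually compute that the multiple is $1$, e.g.\ by evaluating $\partial_{w_0}(x_1^{n-1}\cdots x_{n-1})=1$ or by carrying out the localization you mention. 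Neither issue affects the viability of the strategy.
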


\begin{remark} $\fS_w$ depends only on the permutation $w$ and does not depend on the choice of its reduced decomposition. Indeed, one can pass from any reduced decomposition of $w_0w$ to any other using the relations $s_is_j=s_js_i$ for $|i-j|>1$ and $s_is_{i+1}s_i=s_{i+1}s_is_{i+1}$ (the proof of this well-known fact can be found, for instance, in \cite[Sec.~2.1]{Manivel98} or \cite[Chapter~1]{Humphreys90}). Exercise~\ref{xca:welldef} states that the divided difference operators satisfy these relations as well.
\end{remark}

\begin{xca}\label{xca:schubert_s3} Compute the Schubert polynomials for all six permutations in $S_3$.
\end{xca}

\begin{hint} The answer is as follows:
\begin{eqnarray*}
  \fS_{id}=1; & \fS_{s_1}=x; & \fS_{s_2}=x+y;\\
  \fS_{s_1s_2}=xy; & \fS_{s_2s_1}=x^2; & \fS_{s_1s_2s_1}=x^2y.
\end{eqnarray*}
\end{hint}

\begin{xca} Show that for $s_i\in S_n$, the Schubert polynomial equals $\fS_{s_i}=x_1+\dots+x_i$.
\end{xca}

Note that all the coefficients of Schubert polynomials in these examples are nonnegative. It turns out that this is always the case. From Theorem~\ref{thm:divdiff} this is absolutely unclear, since the divided difference operator involves subtractions; however, after all these subtractions and divisions we always get a polynomial with positive coefficients. This was shown independently by Fomin and Stanley \cite{FominStanley94} and Billey, Jockush and Stanley \cite{BilleyJockuschStanley93} (the original conjecture is due to Stanley, and that is why his name is on two ``independent'' papers). 

In \cite{BergeronBilley93} and \cite{FominKirillov96}, a \emph{manifestly positive} rule for computing Schubert polynomials was proposed. We will describe this rule now. For this we will need to define  combinatorial objects called \emph{pipe dreams}, or \emph{rc-graphs}.

Consider an $(n\times n)$-square divided into $(1\times 1)$-squares. We will fill the small squares by two types of elements, ``crosses'' $\textcross$ and ``elbow joints'' $\textelbow$. First, let us put elbow joints in all squares on the antidiagonal and below it. Above the antidiagonal, let us put elements of these two types in an arbitrary way. We will get something like Figure~\ref{fig:pipedreamsample}, left.
\begin{figure}[h!]
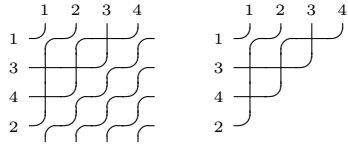

\begin{rcgraph}
\begin{shortstretchpipedream}{4}
 &     \perm1{}&\perm2{}&\perm3{}&\perm4{}\\
\petit1 &   \jr  &   \jr  &   \+   & \jr\\
\petit3 &  \+   &   \+   &   \jr  & \jr\\
\petit4 &  \+   &   \jr  &   \jr  & \jr\\
\petit2 &  \jr  &   \jr  &   \jr  & \jr\\
\end{shortstretchpipedream}
\qquad
\begin{shortstretchpipedream}{4}
 &     \perm1{}&\perm2{}&\perm3{}&\perm4{}\\
\petit1 &   \jr &   \jr  &   \+   & \je\\
\petit3 &  \+   &   \+   &   \je  & \\
\petit4 &  \+   &   \je  &        & \\
\petit2 &  \je  &        &        &\\
\end{shortstretchpipedream}
\end{rcgraph}
\caption{A pipe dream}\label{fig:pipedreamsample}
\end{figure}
In this picture we see a configuration of four strands starting at the left edge of the square and ending on the top edge in a different order. Such a configuration is called a \emph{pipe dream}. Let us put numbers $1,\dots,n$ on the top ends of the strands and put the same number on the left end of each strand. Then the reading of the numbers on the left edge gives us a permutation (in the example on Figure~\ref{fig:pipedreamsample} this permutation is equal to $(1342)$). Let us denote the permutation corresponding to a pipe dream $P$ by $\pi(P)$. The part below the antidiagonal plays no essential role, so further we will just omit it (see Figure~\ref{fig:pipedreamsample}, right).

A pipe dream is said to be \emph{reduced} if each pair of strands intersects at most once. The pipe dream on Figure~\ref{fig:pipedreamsample} is not reduced, since the strands $3$ and $4$ intersect twice. We will consider only reduced pipe dreams.

It is clear that for a permutation $w$ there can be more than one reduced pipe dream $P$ with $\pi(P)=w$. The first example is given by $w=(132)=s_2$: it corresponds to two such pipe dreams, shown on Figure~\ref{fig:s2pipedream} below.
\begin{figure}[h!]
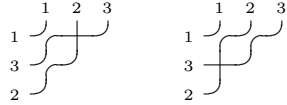

\begin{rcgraph}
 \begin{shortstretchpipedream}{3}
 &     \perm1{}&\perm2{}&\perm3{}\\
\petit1 &   \jr &   \+  & \je\\
\petit3 &   \jr   &   \je  \\
\petit2 &   \je  &        \\
\end{shortstretchpipedream}
\qquad
 \begin{shortstretchpipedream}{3}
 &     \perm1{}&\perm2{}&\perm3{}\\
\petit1 &   \jr &   \jr   & \je\\
\petit3 &   \+   &   \je  & \\
\petit2 &   \je  &        & \\
\end{shortstretchpipedream}
\end{rcgraph}
\caption{Two reduced pipe dreams of $w=(132)$}\label{fig:s2pipedream}
\end{figure}

\begin{xca}
 Let $P$ be a reduced pipe dream, $\pi(P)=w$. Show that the number of crosses in $P$ equals $\ell(w)$.
\end{xca}

Let $P$ be an arbitrary pipe dream with $n$ strands. Denote by $d(P)$ the monomial $x_1^{i_1}x_2^{i_2}\dots x_{n-1}^{i_{n-1}}$, where $i_k$ is the number of crosses in the $k$-th row (note that the $n$-th row never contains crosses). The monomials corresponding to pipe dreams on Figure~\ref{fig:s2pipedream} correspond to monomials $x_1$ and $x_2$, respectively.

The following theorem, usually called the Fomin--Kirillov theorem, expresses the Schubert polynomial of a permutation as a sum of monomials corresponding to pipe dreams.
\begin{theorem}[\cite{BergeronBilley93}, \cite{FominKirillov96}]\label{thm:fominkirillov} Let $w\in S_n$. The Schubert polynomial of $w$ is equal to
\[
 \fS_w=\sum_{\pi(P)=w} d(P),
\]
where the sum is taken over all reduced pipe dreams corresponding to $w$. 
\end{theorem}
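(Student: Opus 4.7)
The plan is to show that the two sides satisfy the same recursive definition. Set $R_w := \sum_{\pi(P)=w} d(P)$ to denote the right-hand side. By Theorem~\ref{thm:divdiff} (together with the nilpotence $\partial_i^2=0$ and the braid relations of Exercise~\ref{xca:welldef}), the Schubert polynomials are characterized inductively by the two properties
\begin{equation*}
\fS_{w_0} = x_1^{n-1}x_2^{n-2}\cdots x_{n-1}, \qquad \partial_i \fS_v = \fS_{vs_i}\ \text{whenever}\ \ell(vs_i)=\ell(v)-1.
\end{equation*}
So it suffices to verify both properties for $R_w$, after which equality $R_w = \fS_w$ follows by downward induction on $\ell(w)$, starting from $w_0$.

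The base case is straightforward. For $w_0=(n,n-1,\dots,1)$ every pair of strands must meet exactly once. Since a reduced pipe dream in an $n\times n$ board has at most $\binom{n}{2}=\ell(w_0)$ crossings, and all crosses must lie strictly above the antidiagonal, there is a unique reduced pipe dream for $w_0$, obtained by putting a cross in every square $(i,j)$ with $i+j\le n$. Its weight is exactly $x_1^{n-1}x_2^{n-2}\cdots x_{n-1}$.

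The inductive step is the combinatorial core. Fix $v$ with $v(i)>v(i+1)$ and set $w=vs_i$; we must show $\partial_i R_v = R_w$. Since $\partial_i$ commutes with multiplication by any polynomial symmetric in $x_i,x_{i+1}$, we stratify $\mathrm{RP}(v)$ (reduced pipe dreams for $v$) by the configuration outside rows $i$ and $i+1$. Within each stratum one then invokes the ``ladder/chute move'' of Bergeron--Billey (equivalently, the mitosis of Knutson--Miller): for every $P\in\mathrm{RP}(w)$ one defines a finite family $M_i(P)\subset\mathrm{RP}(v)$, obtained by inserting a crossing between rows $i$ and $i+1$ in all positions compatible with reducedness, such that
\begin{equation*}
\partial_i\!\!\sum_{P'\in M_i(P)} d(P') \;=\; d(P),
\end{equation*}
and such that the sets $M_i(P)$, as $P$ ranges over $\mathrm{RP}(w)$, form a partition of $\mathrm{RP}(v)$. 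Summing this identity over $P$ yields $\partial_i R_v = R_w$, completing the induction.

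The main obstacle is the combinatorial lemma on $M_i(P)$. Two facts must be established in parallel: (a) that every pipe dream produced by inserting a crossing really has underlying permutation $v$ and is reduced (this is delicate because adding a crossing to rows $i,i{+}1$ can in principle scramble far-away strand labels, so one needs to track strands through the whole lower portion); and (b) that the inverse ``chute-removal'' map is well defined, giving the partition property. A conceptually cleaner, though more machinery-heavy, alternative is to work inside the nilCoxeter algebra: one writes $R_w$ as the coefficient of $u_w$ in the Fomin--Stanley generating product
\begin{equation*}
A(x_1,\dots,x_{n-1}) \;=\; \prod_{k=1}^{n-1}\,\bigl((1+x_k u_{n-1})(1+x_k u_{n-2})\cdots(1+x_k u_k)\bigr),
\end{equation*}
and then the Yang--Baxter commutation $(1+au_i)(1+(a+b)u_{i+1})(1+bu_i)=(1+bu_{i+1})(1+(a+b)u_i)(1+au_{i+1})$ together with $u_i^2=0$ reduces the divided-difference step to a purely algebraic manipulation, sidestepping the case analysis of ladder moves entirely.
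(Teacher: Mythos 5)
The paper states this theorem with citations to Bergeron--Billey and Fomin--Kirillov but gives no proof, so there is no argument in the text to compare yours against; I will assess the proposal on its own terms. The overall strategy is the right one: characterize $\fS_w$ by the value at $w_0$ together with the divided-difference recursion of Theorem~\ref{thm:divdiff}, then verify that $R_w := \sum_{\pi(P)=w} d(P)$ satisfies the same. Your base case is correct and complete: the unique reduced pipe dream for $w_0$ is the full staircase of crosses, with weight $x_1^{n-1}\cdots x_{n-1}$.

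The gap is in the inductive step, and you flag it yourself as ``the combinatorial core'' without carrying it out. Moreover, the per-fiber identity you posit is not quite the right formulation. The Knutson--Miller mitosis operator goes from the \emph{longer} permutation to the \emph{shorter} one (it deletes a cross, not inserts one), and the disjoint-union theorem $\mathrm{RP}(w) = \bigsqcup_{Q \in \mathrm{RP}(v)} \mathrm{mitosis}_i(Q)$ partitions $\mathrm{RP}(w)$, not $\mathrm{RP}(v)$ as you write. Even granting that bijection, converting it into the generating-function identity $\partial_i R_v = R_w$ is a further nontrivial step, precisely because $\partial_i$ applied to an individual monomial $d(Q)$ need not be a nonnegative sum of monomials; one has to group the contributions from crosses in rows $i$ and $i+1$ rather than argue fiber by fiber. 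The nilCoxeter/Yang--Baxter alternative has the same status: that the coefficient of $u_w$ in $A(x_1,\dots,x_{n-1})$ enumerates pipe dreams, and that the Yang--Baxter and $u_i^2=0$ relations force the divided-difference recursion, are both asserted but not demonstrated. What you have written is a correct and well-informed roadmap pointing at the right lemmas, but since those lemmas constitute essentially the entire content of the theorem, it is not yet a proof.
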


This theorem implies positivity of coefficients of Schubert polynomials.

\begin{example} Let $w=(1432)$. Then there are five reduced pipe dreams corresponding to $w$, see Figure~\ref{fig:1432pipedream}. We conclude that
\[
 \fS_{(1432)}(x,y,z)=x^2y+xy^2+x^2z+xyz+y^2z.
\]
\end{example}

\begin{figure}[h!]
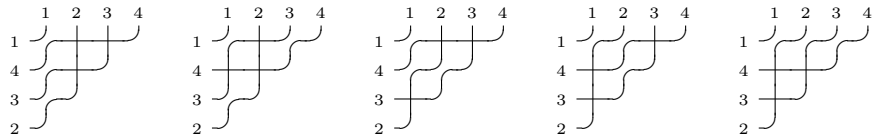

\begin{rcgraph}
\begin{shortstretchpipedream}{4}
       &\perm1{}&\perm2{}&\perm3{}&\perm4{}\\
\petit1&   \jr  &   \+   &   \+   &  \je   \\
\petit4&   \jr  &   \+   &   \je  &\\
\petit3&   \jr  &   \je  &        &\\
\petit2&   \je  &        &        &\\
\end{shortstretchpipedream}
\quad
\begin{shortstretchpipedream}{4}
       &\perm1{}&\perm2{}&\perm3{}&\perm4{}\\
\petit1&   \jr  &   \+   &   \jr  &  \je   \\
\petit4&   \+   &   \+   &   \je  &\\
\petit3&   \jr  &   \je  &        &\\
\petit2&   \je  &        &        &\\
\end{shortstretchpipedream}
\quad
\begin{shortstretchpipedream}{4}
       &\perm1{}&\perm2{}&\perm3{}&\perm4{}\\
\petit1&   \jr  &   \+   &   \+    &  \je   \\
\petit4&   \jr &    \jr  &   \je  &\\
\petit3&   \+   &   \je  &        &\\
\petit2&   \je  &        &        &\\
\end{shortstretchpipedream}
\quad
\begin{shortstretchpipedream}{4}
       &\perm1{}&\perm2{}&\perm3{}&\perm4{}\\
\petit1&   \jr  &   \jr  &   \+    &  \je   \\
\petit4&   \+   &   \jr  &   \je  &\\
\petit3&   \+   &   \je  &        &\\
\petit2&   \je  &        &        &\\
\end{shortstretchpipedream}
\quad
\begin{shortstretchpipedream}{4}
       &\perm1{}&\perm2{}&\perm3{}&\perm4{}\\
\petit1&   \jr  &   \jr  &   \jr  &  \je   \\
\petit4&   \+   &   \+   &   \je  &\\
\petit3&   \+   &   \je  &        &\\
\petit2&   \je  &        &        &\\
\end{shortstretchpipedream}
\end{rcgraph}
\caption{Five reduced pipe dreams of $w=(1432)$}\label{fig:1432pipedream}
\end{figure}

\begin{xca} Draw all pipe dreams for all remaining permutations from $S_3$ and compare the result with Exercise~\ref{xca:schubert_s3}.
\end{xca}

\section{Toric varieties}\label{sec:toric}

In the remaining part of the paper we will describe a new approach to Schubert calculus on full flag varieties. We will mostly follow the paper \cite{KiritchenkoSmirnovTimorin12}. In this approach we generalize some notions from the theory of toric varieties and see toric methods working with some modifications in a non-toric case.

In this section we speak about toric varieties and lattice polytopes. In Subsection~\ref{ssec:toricdef} we recall some basic facts about polarized projective toric varieties (this is the only class of toric varieties we will need). This is by no means an introduction into theory of toric varieties; a very nice introduction can be found in Danilov's survey \cite{Danilov78} or Fulton's book \cite{Fulton93}, or in the recent book by Cox, Little, and Schenck \cite{CoxLittleSchenck11}. In the second part of the latter book the authors give an overview of the results of Khovanskii and Pukhlikov on the toric Riemann--Roch theorem; these results are used in the proof of the Khovanskii--Pukhlikov theorem on the cohomology ring of  a smooth toric variety. We discuss this theorem in Subsection~\ref{ssec:pkh}; it will play a crucial role for our construction.

\subsection{Definition, examples and the first properties}\label{ssec:toricdef}


Recall that a normal algebraic variety is called \emph{toric} if it is equipped with an action of an algebraic torus $(\CC^*)^n$, and this action has an open dense orbit.

Consider a polytope $P\subset \RR^n$ with integer vertices. We suppose that $P$ is not contained in a hyperplane. $P$ is called a \emph{lattice polytope} if all vertices of $P$ belong to $\ZZ^n\subset \RR^n$.

Let $A=P\cap\ZZ^n=\{m_{0},\dots,m_N\}$ be the set of all lattice points in $P$, where $N=|A|-1$. Consider a projective space $\PP^N$ with homogeneous coordinates $(x_0:\cdots:x_N)$ indexed by points from $A$. 
For a point $m_i=(m_{i1},\dots,m_{in})\in A$ and a point of the torus $(t_1,\dots,t_n)\in T$, set $t^{m_i}:=t_1^{m_{i1}}\dots t_n^{m_{in}}$.
Now consider the embedding $\Phi_A\colon T\hookrightarrow\PP^N$, defined as follows:
\[
 \Phi_A\colon t\mapsto (t^{m_0}:\dots:t^{m_N}).
\]

\begin{xca} Prove that this map is an embedding.
\end{xca}

Let $X=\overline{\Phi_A(T)}$ be the closure of the image of this map. $X$ is a polarized projective toric variety. The word ``polarized'' means that it comes with an embedding into a projective space, or, equivalently, that we fix a very ample divisor on $X$.

\begin{xca} Show that there is a dimension-preserving bijection between $T$-orbits on $X$ and faces of $P$. The open orbit corresponds to the polytope $P$ itself.
\end{xca}

\begin{theorem}[{\cite[Chapter 2]{CoxLittleSchenck11}}] Any polarized projective toric variety can be obtained in such a way from a certain lattice polytope $P$. Two varieties are isomorphic if the corresponding polytopes have the same normal fan.
\end{theorem}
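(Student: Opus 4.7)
My plan is to translate the statement into the standard correspondence between polarized toric varieties and lattice polytopes (modulo translation), using the weight decomposition of global sections. Given a polarized projective toric variety $(X,L)$, where $L$ is a very ample $T$-linearized line bundle, the torus $T$ acts on $H^0(X,L)$, so this finite-dimensional space decomposes into character eigenspaces $H^0(X,L)=\bigoplus_{m\in M} H^0(X,L)_m$ where $M$ is the character lattice of $T$. Since $X$ is normal and the torus orbit is dense, a standard argument (each nonzero weight space is one-dimensional, spanned by the global section restricting to the character $\chi^m$ on the open orbit) shows that the set $A$ of weights appearing is finite, and one defines $P_L$ to be the convex hull of $A$ in $M_\RR$.

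I would then identify the embedding $X\hookrightarrow \PP H^0(X,L)^*$ provided by the very ample $L$ with the map $\Phi_A$ of the excerpt. Choosing the basis of $H^0(X,L)$ given by the $\chi^m$ for $m\in A$ gives homogeneous coordinates indexed by $A$, and the restriction of the embedding to the open orbit $T\subset X$ is precisely $t\mapsto (t^m)_{m\in A}$. Since $X$ is normal and projective, it is the closure of this orbit, which identifies $X$ with the variety $\overline{\Phi_A(T)}$ constructed from $P_L$. A small verification is needed to check that $A=P_L\cap M$ (rather than a strict subset); this uses that $L$ is base-point free and that the polytope of a very ample line bundle on a normal toric variety is a lattice polytope whose lattice points are exactly the weights of $H^0(X,L)$.

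For the second assertion, I would invoke the intrinsic description of a toric variety by a fan. The key step is to build the normal fan $\Sigma_P$ from $P$, whose cones are the inner normal cones to the faces of $P$, and to identify the toric variety $X_P$ obtained from $\Phi_A$ with the toric variety $X_{\Sigma_P}$ glued from affine charts $U_\sigma=\mathrm{Spec}\,\CC[\sigma^\vee\cap M]$. Since this gluing depends only on $\Sigma_P$ and not on the specific polytope realizing it, two polytopes with the same normal fan yield isomorphic abstract toric varieties (of course the polarization changes along with $P$).

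The main obstacle I anticipate is the careful bookkeeping in identifying $A=P_L\cap M$ and checking that the torus-equivariant embedding given by $L$ coincides, up to a choice of basis of $H^0(X,L)$, with $\Phi_A$; this is where normality of $X$ and the hypothesis that $L$ is very ample (not merely ample) are both essential. A secondary subtlety is the ambiguity of $P$ up to translation by a character of $T$: translating all points of $A$ by a fixed $m_0\in M$ only rescales the embedding by the monomial $t^{m_0}$ and hence defines the same projective subvariety, so the correspondence between polarized toric varieties and lattice polytopes is well-defined only modulo lattice translations.
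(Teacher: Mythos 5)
The paper does not prove this theorem at all: it is quoted from \cite[Chapter 2]{CoxLittleSchenck11} as a known fact, so there is no internal argument to compare yours against. Judged on its own, your outline is the standard proof from that reference: decompose $H^0(X,L)$ into $T$-weight spaces, observe each is at most one-dimensional by restricting to the dense orbit, identify the very ample embedding $X\hookrightarrow\PP H^0(X,L)^*$ in the eigenbasis with $\Phi_A$, verify that the weight set is exactly $P_L\cap\ZZ^n$, and handle the second assertion through the normal fan. The remarks about normality, very ampleness, and the translation ambiguity are all the right things to worry about, and the overall plan is sound.

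One step deserves more care than you give it: the identification of $\overline{\Phi_A(T)}$ with the abstract toric variety $X_{\Sigma_P}$ glued from the charts $\mathrm{Spec}\,\CC[\sigma^\vee\cap\ZZ^n]$. For an \emph{arbitrary} lattice polytope $P$ this identification fails, because $\overline{\Phi_{P\cap\ZZ^n}(T)}$ need not be normal (the standard counterexample is a lattice simplex whose only lattice points are its vertices but whose normalized volume exceeds $1$); consequently the second sentence of the theorem, read for arbitrary $P$, is false without a normality or very-ampleness hypothesis on the polytope. The fix is to note that the polytopes produced by the first half of your argument are exactly the \emph{very ample} polytopes in the sense of Cox--Little--Schenck (for every vertex $v$ the semigroup generated by $(P\cap\ZZ^n)-v$ is saturated), and for those the affine charts of $\overline{\Phi_A(T)}$ around the torus-fixed points are precisely $\mathrm{Spec}\,\CC[\sigma_v^\vee\cap\ZZ^n]$ for the maximal cones $\sigma_v$ of the normal fan. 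With that restriction in place the gluing depends only on $\Sigma_P$ and your conclusion follows; without it, the key lemma you invoke is not available.
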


In the following three examples the torus is two-dimensional, and the polytopes are just polygons.

\begin{example}
 Let $P$ be a triangle with vertices $(0,0)$, $(1,0)$, and $(0,1)$. The torus orbit is formed by the points $(1:t_1:t_2)\in\PP^2$, and its closure is the whole $\PP^2$.
\end{example}

\begin{example} In a similar way, consider a right isosceles triangle with vertices $(0,0)$, $(k,0)$, and  $(0,k)$. It defines the following embedding of $(\CC^*)^2$ into  $\PP^2$:
\[
 (t_1,t_2)\mapsto (\cdots:t_1^i t_2^j:\cdots),\text{ where } i+j\leq k.
\]
Its closure is the image of the $k$-th Veronese embedding $v_k\colon \PP^2\hookrightarrow \PP^{k(k+1)/2-1}$.
\end{example}

Note that in these two examples we get two different embeddings of the same variety, and the corresponding polytopes have the same normal fan. 

\begin{example} Let $P$ be a unit square. The embedding $T=(\CC^*)^2\hookrightarrow \PP^3$ is then given by
\[
 (t_1,t_2)\mapsto (1:t_1:t_2:t_1t_2).
\]
The closure of its image is given by the relation $x_0x_3=x_1x_2$. It is isomorphic to $\PP^1\times \PP^1$ embedded by Segre into $\PP^3$. 
\end{example}

More details on Segre and Veronese embeddings can be found in \cite{Reid88} or \cite{Harris92}.

Recall that a polytope $P\subset \RR^n$ is said to be \emph{simple} if it has exactly $n$ edges meeting in each vertex. (I.e., a cube is simple, while an octahedron is not). Let $P\subset\ZZ^n$ be a simple lattice polytope. For each of its vertices $v$, consider the set of edges adjacent to this vertex and for each edge take the \emph{primitive vector}, i.e., the vector joining $v$ with the nearest lattice point on its edge. For each $v$ we get a set of lattice vectors. The polytope $P$ is called \emph{integrally simple} if for each $v$ such a set of vectors forms a basis of the lattice $\ZZ^n$.

\begin{example} Let $k>0$. A triangle with vertices $(0,0)$, $(k,0)$ and $(1,0)$ is integrally simple iff $k=1$.  The corresponding toric variety is the \emph{weighted projective plane} $\PP(1,1,k)$.
\end{example}

The following theorem gives a criterion for smoothness of a toric variety.

\begin{theorem}\cite[Theorem~2.4.3]{CoxLittleSchenck11} A projective toric variety $X$ is smooth iff the corresponding lattice polytope is integrally simple. 
\end{theorem}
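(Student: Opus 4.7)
The plan is to reduce global smoothness of $X$ to smoothness at the finitely many $T$-fixed points, and then analyze each fixed point via an explicit affine chart. Since $X$ carries a $T$-action with dense open orbit, its singular locus is a proper $T$-invariant closed subset, hence a union of closures of orbits of strictly smaller dimension. In particular, if $X$ is singular anywhere, it is singular at some $T$-fixed point; by the orbit--face correspondence recalled earlier, these correspond bijectively to the vertices of $P$. So it suffices to decide smoothness of $X$ at the fixed point $p_v$ attached to each vertex $v \in P$.

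Next I would construct an explicit affine neighborhood of $p_v$. In the chart $\{x_v \neq 0\} \subset \PP^N$, the inhomogeneous coordinates pull back to $y_m = x_m / x_v = t^{m-v}$ for $m \in A \setminus \{v\}$, so $U_v := X \cap \{x_v \neq 0\}$ equals $\operatorname{Spec} \CC[S_v]$, where $S_v \subset \ZZ^n$ is the semigroup generated by $\{m-v : m \in A,\ m \neq v\}$. Since $P$ is simple at $v$, all these vectors lie in the simplicial cone $C_v \subset \RR^n$ spanned by the primitive edge vectors $u_1, \dots, u_n$ at $v$. Using normality of $X$ (or, equivalently, replacing $P$ by a large multiple $kP$ so that every lattice point of $C_v$ in the fundamental parallelotope of the $u_i$ appears in $A-v$), one has $\CC[S_v] = \CC[C_v \cap \ZZ^n]$, and $p_v$ is the closed point cut out by the maximal ideal $\mathfrak{m}_v$ generated by the nontrivial monomials $y^u$, $u \in (C_v \cap \ZZ^n)\setminus\{0\}$.

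The theorem then reduces to the local claim: $U_v = \operatorname{Spec}\CC[C_v \cap \ZZ^n]$ is smooth at $\mathfrak{m}_v$ if and only if $u_1, \dots, u_n$ form a $\ZZ$-basis of $\ZZ^n$. The ``if'' direction is immediate: a unimodular change of lattice coordinates sends $u_i \mapsto e_i$, turning $C_v \cap \ZZ^n$ into $\ZZ_{\geq 0}^n$ and $U_v$ into $\CC^n$. For the converse, I would compute $\mathfrak{m}_v / \mathfrak{m}_v^2$: a $\CC$-basis is indexed by the Hilbert basis of the monoid $C_v \cap \ZZ^n$, that is, the nonzero lattice points of $C_v$ which are not sums of two nonzero such points. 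Since $C_v$ is simplicial, each $u_i$ belongs to the Hilbert basis; if the $u_i$ fail to span $\ZZ^n$, then the sublattice $\ZZ u_1 + \cdots + \ZZ u_n$ has index strictly greater than $1$, so any nonzero lattice point of $C_v$ lying in the half-open fundamental parallelotope of the $u_i$ gives an extra Hilbert basis element. Consequently $\dim_\CC \mathfrak{m}_v/\mathfrak{m}_v^2 > n = \dim U_v$, and $p_v$ is singular.

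The main technical obstacle will be the identification $\CC[S_v] = \CC[C_v \cap \ZZ^n]$, which asserts that $S_v$ is saturated in $\ZZ^n$; an arbitrary choice of $A$ may simply not see all lattice points of $C_v$ close to $v$. The cleanest fix is to invoke normality of projective toric varieties (so the coordinate ring of $U_v$ is integrally closed, forcing saturation), or to replace $P$ by a large multiple $kP$, which yields the same abstract variety with a more ample polarization but a combinatorially better-behaved semigroup; either way the argument above then goes through verbatim.
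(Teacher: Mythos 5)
The paper offers no proof of this statement---it is quoted directly from Cox--Little--Schenck---so there is nothing in-text to compare against; your proposal must stand on its own, and it essentially reproduces the standard argument from the cited reference: reduce to the $T$-fixed points via $T$-invariance of the singular locus and the orbit--face correspondence, identify the chart at a vertex $v$ with $\operatorname{Spec}\CC[S_v]$, and decide smoothness by computing the Zariski cotangent space at the fixed point in terms of the Hilbert basis of the saturated semigroup $C_v\cap\ZZ^n$. This is correct in outline, and you rightly flag the one genuine technical point (saturation of $S_v$), which is handled exactly as you say: either invoke normality or replace $P$ by $kP$ with $k$ large so that $P$ is very ample at every vertex---indeed the cited theorem is stated under that convention.

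Two repairs are needed in the ``smooth $\Rightarrow$ integrally simple'' direction. First, your cotangent computation begins with ``Since $P$ is simple at $v$\dots'', so as written it only shows that a \emph{simple} but not integrally simple vertex is singular; the case where $P$ fails to be simple at $v$ is silently skipped. The same computation covers it: if $C_v$ is not simplicial it has more than $n$ extreme rays, the primitive generator of each extreme ray is an irreducible element of $C_v\cap\ZZ^n$, and hence $\dim_\CC\mathfrak{m}_v/\mathfrak{m}_v^2>n$ already for that reason. Second, a nonzero lattice point in the half-open fundamental parallelotope of $u_1,\dots,u_n$ need not itself be a Hilbert basis element (it may decompose as a sum of two nonzero elements of $C_v\cap\ZZ^n$); the correct statement is that its existence shows $\{u_1,\dots,u_n\}$ cannot be the whole Hilbert basis, since those generators alone span only the proper subsemigroup $\ZZ_{\geq0}u_1+\dots+\ZZ_{\geq0}u_n$, and therefore the Hilbert basis---and with it $\dim_\CC\mathfrak{m}_v/\mathfrak{m}_v^2$---exceeds $n$. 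With these two adjustments the argument is complete.
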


\subsection{The Khovanskii--Pukhlikov ring}\label{ssec:pkh} Our next goal is to describe the integral cohomology ring $H^*(X,\ZZ)$ of a smooth toric variety. This was first done in Danilov's survey~\cite[Sec.~10]{Danilov78}. Danilov speaks about the Chow ring $A^*(X)$ rather than about the cohomology ring, but for smooth toric varieties over $\CC$ these rings are known to be isomorphic (loc.~cit., Theorem~10.8).

We will give a description of $H^*(X,\ZZ)$ which implicitly appeared in the paper by A.~G.~Khovanskii and A.~V.~Pukhlikov \cite{KhovanskiiPukhlikov93} and was made explicit by K.~Kaveh~\cite{Kaveh11}. We begin with a construction which produces a finite-dimensional commutative ring starting from a lattice polytope. To do this, let us first recall some definitions.

Let $P\subset \RR^n$ be a polytope not contained in a hyperplane, and let $h=a_0+a_1x_1+\dots+a_nx_n$ be an affine function. The hyperplane defined by this function is called a \emph{supporting hyperplane} if $h(x)\leq 0$ for each point $x\in P$ and the set $\{x\in P\mid h(x)=0\}$ is nonempty. The intersection of $P$ with a supporting hyperplane is called \emph{face}; faces of dimension $n-1$, 1, and 0 are called \emph{facets}, \emph{edges}, and \emph{vertices}, respectively. 

With each face $F$ we can associate the set of linear parts $(a_1,\dots,a_n)$ of all supporting hyperplanes corresponding to $F$. It is a closed strongly convex cone in $\RR^n$. It is called the \emph{normal cone to $P$ along $F$}. The set of all normal cones spans a complete fan, called the \emph{normal fan} of $P$.

We will say that two polytopes $P,Q\subset \RR^n$ are \emph{analogous} (notation: $P\sim Q$) if they have the same normal fan. The Minkowski sum $P+Q$ of two analogous polytopes is analogous to each of them. Polytopes also can be multiplied by nonnegative real numbers; $\lambda P$ is obtained from $P$ by dilation with the coefficient $\lambda$. Clearly, $\lambda P\sim P$. This means that the set of all polytopes analogous to a given polytope $P$ forms a semigroup with multiplication by positive numbers. Denote this semigroup by $S_P$.

\begin{xca} Show that $S_P$ has a cancellation property: if $P+R=Q+R$, then $P=Q$.
\end{xca}

\begin{example} The first two polygons on Figure~\ref{fig:normalfans} are analogous to each other, while the third one is not analogous to them. Their normal fans are depicted below.
\begin{figure}[h!]
\begin{tikzpicture}
\draw[ultra thick] (0,0)--(3,0)--(1,2)--(0,2)--cycle;
\draw[->] (0.5,0)--(0.5,-0.5);
\draw[->] (0.5,2)--(0.5,2.5);
\draw[->] (0,1)--(-0.5,1);
\draw[->] (2,1)--(2.3,1.3);
\end{tikzpicture}
\begin{tikzpicture}
\draw[ultra thick] (0,0)--(4,0)--(3,1)--(0,1)--cycle;
\draw[->] (1.5,0)--(1.5,-0.5);
\draw[->] (1.5,1)--(1.5,1.5);
\draw[->] (0,0.5)--(-0.5,0.5);
\draw[->] (3.5,0.5)--(3.8,.8);
\end{tikzpicture}
\begin{tikzpicture}
\draw[ultra thick] (3,2)--(0,2)--(2,0)--(3,0)--cycle;
\draw[->] (2.5,0)--(2.5,-0.5);
\draw[->] (2.5,2)--(2.5,2.5);
\draw[->] (3,1)--(3.5,1);
\draw[->] (1,1)--(0.7,0.7);
\end{tikzpicture}

\begin{tikzpicture}
\path[fill=gray] (-2,-2) rectangle (0,0);
\path[fill=gray!60] (-2,2) rectangle (0,0);
\path[fill=gray!30] (0,0)--(2,2)--(0,2)--cycle;
\path[fill=gray!70] (0,0)--(2,2)--(2,-2)--(0,-2)--cycle;
\draw[ultra thick] (0,-2.5)--(0,2.5);
\draw[ultra thick] (-2.5,0)--(0,0)--(2.5,2.5);
\end{tikzpicture}
\begin{tikzpicture}
\path[fill=gray] (2,2) rectangle (0,0);
\path[fill=gray!60] (2,-2) rectangle (0,0);
\path[fill=gray!30] (0,0)--(-2,-2)--(0,-2)--cycle;
\path[fill=gray!70] (0,0)--(-2,-2)--(-2,2)--(0,2)--cycle;
\draw[ultra thick] (0,2.5)--(0,-2.5);
\draw[ultra thick] (2.5,0)--(0,0)--(-2.5,-2.5);
\end{tikzpicture}
 \caption{Polytopes and their normal fans}\label{fig:normalfans}
\end{figure}
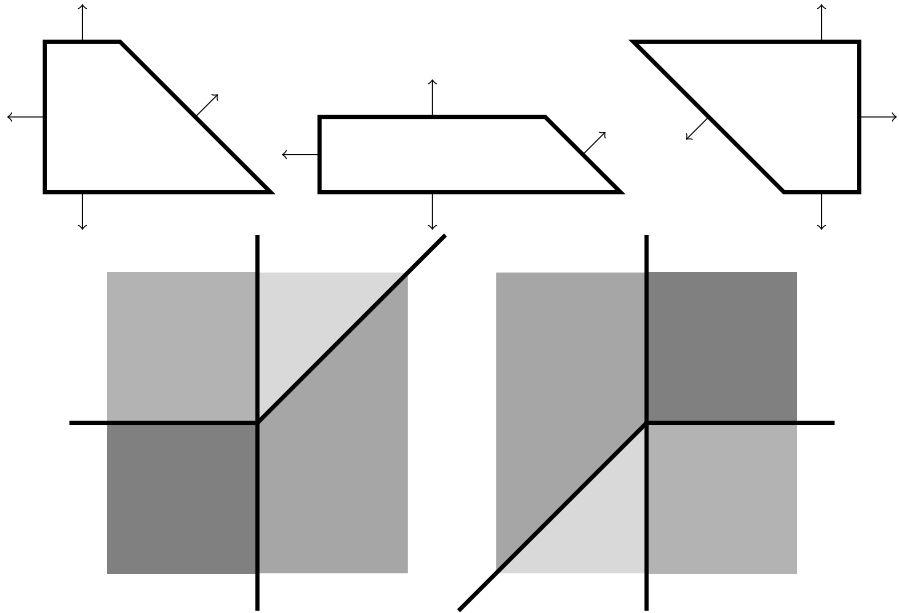

\end{example}

Consider the Grothendieck group of $S_P$ by adding formal differences of polytopes, with obvious equivalence relations. Denote this group by $V_P$; its elements are called \emph{virtual polytopes} analogous to $P$. Virtual polytopes can be multiplied by any real numbers, so $V_P$ is a vector space. It is clear that this space is finite-dimensional.

\begin{example} Let $P$ be simple. Then $V_P$ has a natural coordinate system, given by the \emph{support numbers}, i.e., the distances from the origin to the facets of $P$ (cf. Figure~\ref{fig:support}). The points of $V_P$ such that all its coordinates are positive correspond to the ``actual'' polytopes (i.e., elements of $S_P\subset V_P$) containing the origin. Thus, in this case $\dim V_P$ is equal to the number of facets of $P$.
\end{example}

\begin{figure}[h!]
\begin{tikzpicture}
  \draw[ultra thick] (0,0)--(9,0)--(3,6)--(0,6)--cycle;
  \draw[fill] (2,3) circle [radius=0.05];
  \draw (2,0)--(2,6);
  \draw (0,3)--(2,3)--(4,5);
  \node[above] at (1,3) {${h_1}$};
  \node[left] at (2,4.5) {${h_2}$};
  \node[left] at (2,1.5) {${h_4}$};
  \node[below right] at (3,4) {${h_3}$};
  \draw (1.7,0)--(1.7,0.3)--(2,0.3);
  \draw (1.7,6)--(1.7,5.7)--(2,5.7);
  \draw (0,3.3)--(0.3,3.3)--(0.3,3);
  \draw (3.8,4.8)--(3.6,5)--(3.8,5.2);
\end{tikzpicture}

\caption{Support numbers}\label{fig:support}
\end{figure}
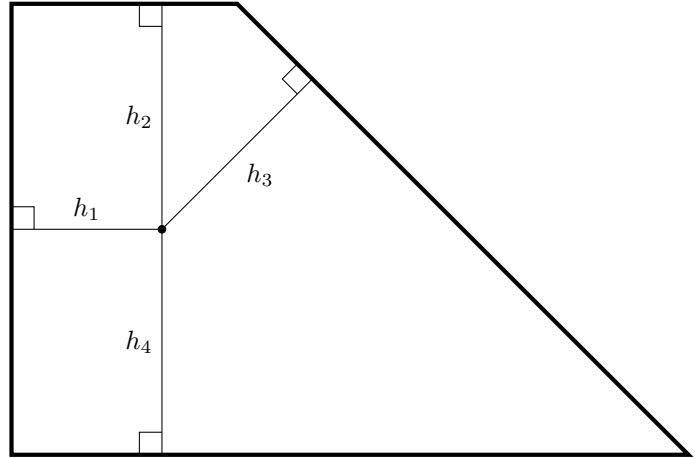

Note that for a nonsimple polytope $P$ there are relations on the support numbers (they cannot be changed independently from each other), so the space $V_P$ is a proper subspace in the vector space generated by support numbers.

Define the \emph{volume polynomial} $\vol_P\colon V_P\to \RR$ as follows. For each polytope $Q\in S_P$, let $\vol_P(Q)\in\RR$ be the volume of $Q$. This function can be extended to a unique homogeneous polynomial function of degree $n$ on $V_P$ (cf.~\cite{Kaveh11}).

\begin{definition}
Consider the (commutative) ring of all differential operators with integer coefficients $\Diff_\ZZ(V_P)$ on the space $V_P$. Let $\Ann(\vol_P)$ be the annihilator ideal of the volume polynomial $\vol_P$. The \emph{Khovanskii--Pukhlikov ring} of $P$ is the quotient of $\Diff_\ZZ(V_P)$ modulo this ideal:
\[
 R_P:=\Diff_\ZZ(V_P)/\Ann(\vol_P).
\]
\end{definition}

Since the polynomial $\vol_P$ is homogeneous, this ring inherits the grading from $\Diff_\ZZ(V_P)$. It is finite-dimensional, since any differential operator of degree greater than $n$ annihilates $\vol_P$. It also has a pairing: for two homogeneous differential operators $D_1,D_2$ such that $\deg D_1+\deg D_2=n$, set
\[
 (D_1,D_2)=D_1D_2(\vol_P)\in\ZZ.
\]

\begin{theorem}[Khovanskii--Pukhlikov, \cite{KhovanskiiPukhlikov93}, also cf.~{\cite[Theorem~5.1]{Kaveh11}}] Let $X$ be a smooth toric variety, $P$ the corresponding lattice polytope. Then
\[
 R_P\cong H^*(X,\ZZ)
\]
as graded rings: $(R_P)_k\cong H^{2k}(X,\ZZ)$. The pairing on $R_P$ corresponds to the Poincar\'e pairing on $H^*(X,\ZZ)$. 
\end{theorem}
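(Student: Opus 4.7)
The plan is to construct a graded ring homomorphism
\[
\Phi\colon \Diff_\ZZ(V_P) \longrightarrow H^*(X,\ZZ),
\]
show it is surjective, and then prove $\Ker\Phi = \Ann(\vol_P)$, so that it descends to the required isomorphism. To build $\Phi$, I use the dictionary between integral polytopes and line bundles on $X$: each integral polytope $Q\in S_P$ gives a $T$-equivariant line bundle $L_Q$ on $X$ with a canonical basis of global sections indexed by lattice points in $Q\cap\ZZ^n$, and Minkowski addition corresponds to tensor product of line bundles. This yields a group homomorphism $V_{P,\ZZ}\to \mathrm{Pic}(X)$ on the integral lattice of $V_P$. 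Composing with the first Chern class gives $c\colon V_{P,\ZZ}\to H^2(X,\ZZ)$. Since the integral degree-$1$ constant-coefficient differential operators on $V_P$ are canonically identified with $V_{P,\ZZ}$, I extend $c$ multiplicatively to a graded ring map $\Phi$ on $\Diff_\ZZ(V_P)=\mathrm{Sym}_\ZZ(V_{P,\ZZ})$.

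Surjectivity of $\Phi$ rests on the Jurkiewicz--Danilov theorem: for a smooth complete toric variety the ring $H^*(X,\ZZ)$ is generated by $H^2(X,\ZZ)$, which in turn is generated by the classes of $T$-invariant prime divisors $D_\rho$, one for each facet of $P$. These classes are precisely the images $c(v_\rho)$ of the facet-support-number basis vectors $v_\rho\in V_{P,\ZZ}$, so $\Phi$ hits a system of generators and hence is surjective; this step uses smoothness of $X$ crucially (through integrality of $P$).

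The geometric input for identifying the kernel is the classical toric volume formula: for any $Q\in S_P$,
\[
\vol_P(Q) \;=\; \frac{1}{n!}\, c_1(L_Q)^n\cap[X].
\]
Polarizing gives the mixed identity
\[
\partial_{v_1}\cdots\partial_{v_n}(\vol_P) \;=\; c_1(L_{v_1})\cdots c_1(L_{v_n})\cap[X] \;=\; \Phi(\partial_{v_1}\cdots\partial_{v_n})\cap[X]
\]
for all $v_1,\dots,v_n\in V_{P,\ZZ}$. Consequently, for $D\in\Diff_\ZZ(V_P)$ homogeneous of degree $k$, the polynomial $D(\vol_P)$ of degree $n-k$ vanishes iff all its top-order derivatives $\partial_{v_1}\cdots\partial_{v_{n-k}}D(\vol_P)$ vanish, iff $\Phi(D)\cdot\Phi(D')\cap[X]=0$ for every monomial operator $D'$ of degree $n-k$. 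By surjectivity of $\Phi$ together with Poincar\'e duality on $X$, this is equivalent to $\Phi(D)=0$. Hence $\Ker\Phi=\Ann(\vol_P)$ and $\Phi$ descends to the isomorphism $R_P\cong H^*(X,\ZZ)$. Compatibility of the two pairings is automatic: for homogeneous $D_1,D_2$ of total degree $n$,
\[
(D_1,D_2) \;=\; D_1D_2(\vol_P) \;=\; \Phi(D_1)\cdot\Phi(D_2)\cap[X].
\]

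The main obstacle is the volume formula itself. When $L_Q$ is very ample it amounts to computing the Hilbert polynomial of $X$ under the projective embedding $X\hookrightarrow \PP(H^0(X,L_Q)^*)$, using that $H^0(X,L_Q^{\otimes m})$ has a basis indexed by $mQ\cap\ZZ^n$ and that higher cohomology of $L_Q^{\otimes m}$ vanishes (Demazure vanishing), so the degree of $X$ in this embedding is $n!\,\vol(Q)$; the general case follows by multilinearity of both sides and extension from nef to all classes. The other nontrivial ingredient, Jurkiewicz--Danilov generation, follows from the Bia{\l}ynicki--Birula cellular decomposition of $X$ into affine cells indexed by cones of its fan. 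Both ingredients are independent of the specific $P$, so once set up they yield the theorem uniformly.
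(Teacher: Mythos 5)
The paper does not prove this theorem; it is stated with references to Khovanskii--Pukhlikov and to Kaveh, and the surrounding text explicitly defers to those sources. So there is no ``paper proof'' to compare against. Evaluated on its own, your argument is a correct and essentially complete rendering of the standard proof: you build $\Phi$ out of the polytope--divisor dictionary (identifying degree-one constant-coefficient integral operators with $V_{P,\ZZ}$ and sending $\partial/\partial h_\rho$ to $[D_\rho]$), get surjectivity from Jurkiewicz--Danilov, and compute the kernel via the polarized volume formula $\partial_{v_1}\cdots\partial_{v_n}(\vol_P)=c_1(L_{v_1})\cdots c_1(L_{v_n})\cap[X]$ together with Poincar\'e duality. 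Two points are worth making explicit rather than leaving implicit. First, the step ``$\Phi(D)\cdot\alpha=0$ for all $\alpha$ implies $\Phi(D)=0$'' over $\ZZ$ requires the Poincar\'e pairing to be unimodular, i.e.\ $H^*(X,\ZZ)$ to be torsion-free; this is true for smooth complete toric $X$ (it follows from the Bia{\l}ynicki--Birula decomposition you cite, or directly from the Jurkiewicz--Danilov presentation), but it is the hinge on which the integral statement turns and should be said aloud. Second, your map on degree one is really defined on $V_{P,\ZZ}$ modulo the translation sublattice $M\cong\ZZ^n$ (translations go to principal divisors and hence to $0$ in $H^2$), which matches the paper's remark that one may work with $\overline{V_P}$; this does not affect $R_P$ since $\Ann(\vol_P)$ contains the translation directions, but it is what makes the dimension count $\dim V_P-n=\operatorname{rk}\operatorname{Pic}(X)$ work out when you invoke the facet basis.
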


If $P$ is simple, the elements of $R_P$ have a nice interpretation: they are algebraic combinations of linear differential  operators $\partial/\partial h_i$, where $h_i$ is a support number corresponding to a facet $F_i$ of $P$. Likewise, a monomial $\partial^k/\partial h_{i_1}\dots\partial h_{i_k}$ of degree corresponds to the face $F_{i_1}\cap\dots \cap F_{i_k}$ of codimension $k$ if this intersection is nonempty; otherwise it annihilates the volume polynomial and thus equals 0 in $R_P$. This establishes a correspondence between this description of $H^*(X,\ZZ)$ and the description given in \cite{Danilov78} or \cite[Chapter~12]{CoxLittleSchenck11}

\begin{remark} Sometimes it is more convenient to take the quotient of the space $V_P$ by translations: two polytopes are called equivalent if they can be obtained one from another by a translation. Denote the quotient space by $\overline{V_P}$. Since the volume is translation-invariant, $\vol_P$ defines a polynomial $\overline{\vol_P}$ of the degree $n$ on $\overline{V_P}$. Obviously, $\Diff_\ZZ(V_P)/\Ann(\vol_P)\cong\Diff_\ZZ(\overline{V_P})/\Ann(\overline{\vol_P})$.
\end{remark}

\begin{example} Let $P$ be a unit square. Then $S_P$ is formed by all rectangles with the sides parallel to the coordinate axes. There are natural coordinates on $\overline{V_P}$: the height and the width of a rectangle; denote them by $x$ and $y$. The volume polynomial is equal to $xy$, and $\Ann\vol_P=(\partial^2/\partial x^2,\partial^2/\partial y^2)$. Then
\[
 R_P=\langle 1,\partial/\partial x,\partial/\partial y, \partial^2/\partial x\partial y\rangle.
\]
This is nothing but the cohomology ring of $\PP^1\times \PP^1$.
\end{example}

\begin{remark} The notion of Khovanskii--Pukhlikov ring $R_P$ still makes sense for an arbitrary polytope $P$; it needs not to be simple. This will be our key observation in the next section, where we will consider the Khovanskii--Pukhlikov ring of a  Gelfand--Zetlin polytope, which is highly nonsimple.
However, for a nonsimple $P$ there is no such relation between the ring $R_P$ and the cohomology ring of the corresponding (singular) toric variety.
\end{remark}

\section{An approach to Schubert calculus via Khovanskii--Pukhlikov rings}\label{sec:gz}

In the last section we discuss a new approach to Schubert calculus on full flag varieties. It is based on the construction of Khovanskii--Pukhlikov ring, discussed in the previous section. We will mostly follow the paper \cite{KiritchenkoSmirnovTimorin12}.

\subsection{Gelfand--Zetlin polytopes}\label{ssec:gzpoly} Take a strictly increasing sequence of integers $\lambda=(\lambda_1<\lambda_2<\dots<\lambda_n)$. Consider a triangular tableau of the following form (it is called a \emph{Gelfand--Zetlin tableau}):
\begin{equation}\label{eq:GZ}
\begin{array}{ccccccccc}
 \lambda_1 && \lambda_2 &&\lambda_3 && \dots && \lambda_n\\
 & x_{11} && x_{12} && \dots && x_{1,n-1}\\
 && x_{21}  && \dots && x_{2,n-2}\\
&&& \ddots & \vdots\\
&&&&x_{n-1,1}
\end{array}
\end{equation}
We will interpret $x_{ij}$, where $i+j\leq n$, as coordinates in $\RR^{n(n-1)/2}$.  This tableau can be viewed a set of inequalities on the coordinates in the following way: for each triangle $\begin{array}{ccc} a && b\\& c\end{array}$ in this tableau, impose the inequalities $a\leq c\leq b$. This system of inequalities defines a bounded polytope in $\RR^{n(n-1)/2}$; it is not contained in any hyperplane. This polytope is called a \emph{Gelfand--Zetlin polytope}; we will denote it by $GZ(\lambda)$.

\begin{example} Here is our fundamental example: if $n=3$, the polytope $GZ(\lambda)$ is a polyhedron in $\RR^3$, presented on Figure~\ref{fig:GZpolytope}. The corresponding Gelfand--Zetlin tableau is as follows:
\[
 \begin{array}{ccccc}
  \lambda_1 && \lambda_2 &&\lambda_3\\
  & x && y\\
  && z
 \end{array}
\]
\end{example}

\begin{figure}[h!]
 \includegraphics{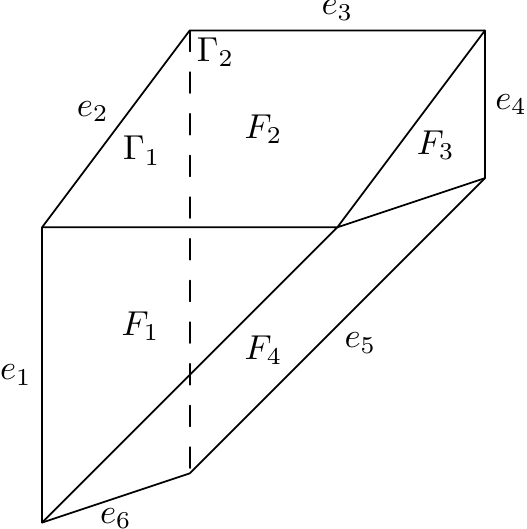}
 \caption{Gelfand--Zetlin polytope in dimension 3}\label{fig:GZpolytope}
\end{figure}

\begin{prop} For a given $n$, all Gelfand--Zetlin polytopes are analogous. The volume polynomial of $GZ(\lambda)$ is proportional to the Vandermonde determinant:
\[
 \vol_{GZ(\lambda)}=c\cdot\prod_{i>j}(\lambda_i-\lambda_j).
\]
\end{prop}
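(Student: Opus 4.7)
The plan is to tackle the two assertions in turn.

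For the first (analogy of all Gelfand--Zetlin polytopes with a fixed $n$), I would observe that every defining inequality of $GZ(\lambda)$ has the form $x_{k-1,i}\le x_{k,i}$ or $x_{k,i}\le x_{k-1,i+1}$ (with $x_{0,i}:=\lambda_i$), so the coefficient vectors on the $x$-side are independent of $\lambda$; only the affine parts involving the $\lambda_i$'s change. Thus the set of facet normal directions is fixed, and the normal fan is determined once we know the combinatorial type. On the open cone $\{\lambda:\lambda_1<\dots<\lambda_n\}$, which is connected, $GZ(\lambda)$ depends continuously on $\lambda$ and remains full-dimensional with no facet collapsing; hence the combinatorial type is locally constant, and therefore constant on the cone.

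For the volume formula I would induct on $n$. The base $n=2$ is immediate since $GZ(\lambda_1,\lambda_2)=[\lambda_1,\lambda_2]$ has length $\lambda_2-\lambda_1$. For the step, project $GZ(\lambda)$ onto the second row $(x_{1,1},\dots,x_{1,n-1})$: the image is the box $\prod_i[\lambda_i,\lambda_{i+1}]$ (the chained inequalities force $x_{1,1}\le\dots\le x_{1,n-1}$), and the fiber over a point is the smaller Gelfand--Zetlin polytope attached to the subtableau with top row $(x_{1,1},\dots,x_{1,n-1})$. Fubini and the inductive hypothesis give
\[
\vol(GZ(\lambda)) = c_{n-1}\int_{\lambda_1}^{\lambda_2}\!\!\cdots\!\!\int_{\lambda_{n-1}}^{\lambda_n}\prod_{1\le i<j\le n-1}(x_{1,j}-x_{1,i})\,dx_{1,1}\cdots dx_{1,n-1} =: c_{n-1}\,I(\lambda).
\]
A degree count shows that $I(\lambda)$, extended as a polynomial on $\mathbb{R}^n$, is homogeneous of degree $\binom{n-1}{2}+(n-1)=\binom{n}{2}$, matching the degree of the Vandermonde.

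The crux is to prove that $I(\lambda)$ is antisymmetric under the natural $S_n$-action on $(\lambda_1,\dots,\lambda_n)$. Together with the degree count, this forces $I$ to be a scalar multiple of $\prod_{i<j}(\lambda_j-\lambda_i)$, closing the induction. To prove antisymmetry, it suffices to handle a simple transposition $\sigma_k=(k\ k+1)$. Swapping $\lambda_k\leftrightarrow\lambda_{k+1}$ inverts the bounds of $x_{1,k}$ from $[\lambda_k,\lambda_{k+1}]$ to $[\lambda_{k+1},\lambda_k]$ (contributing a factor $-1$) and enlarges the bounds on $x_{1,k-1}$ to $[\lambda_{k-1},\lambda_{k+1}]$ and on $x_{1,k+1}$ to $[\lambda_k,\lambda_{k+2}]$. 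Splitting each of these two enlarged intervals into its two natural sub-intervals expands $I(\sigma_k\lambda)$ as a sum of four terms. In exactly one term the $x_{1,k-1}$-range is $[\lambda_{k-1},\lambda_k]$ and the $x_{1,k+1}$-range is $[\lambda_{k+1},\lambda_{k+2}]$; together with the sign from inverting the $x_{1,k}$-interval this term reconstructs $-I(\lambda)$. In each of the three remaining terms at least two of the variables $x_{1,k-1},x_{1,k},x_{1,k+1}$ are integrated over the same sub-interval, and since the integrand $\prod(x_j-x_i)$ is antisymmetric under permutations of its arguments, integration over a symmetric product region kills the contribution.

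The main obstacle will be the bookkeeping of these four cross-terms: one must verify precisely that one recovers $-I(\lambda)$ while the other three vanish by the symmetric-region antisymmetry argument, and then iterate the inductive constant $c_n=c_{n-1}c'$ to recover the final proportionality with the Vandermonde.
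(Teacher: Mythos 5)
Your proof is correct, but it follows a genuinely different route from the paper's. The paper's argument is a two-line application of the machinery already set up in Section~4: since all $GZ(\lambda)$ are analogous, $\vol_{GZ(\lambda)}$ is (the restriction of) a homogeneous volume polynomial of degree $n(n-1)/2=\binom{n}{2}$ in $\lambda_1,\dots,\lambda_n$; it vanishes whenever $\lambda_i=\lambda_j$ because the polytope then degenerates to something of lower dimension; and a degree-$\binom{n}{2}$ polynomial vanishing on all the hyperplanes $\lambda_i=\lambda_j$ is a constant multiple of the Vandermonde. You instead compute the volume directly by induction on $n$, fibering $GZ(\lambda)$ over the box $\prod_i[\lambda_i,\lambda_{i+1}]$ of second rows, and then recover divisibility by the full Vandermonde from the antisymmetry of the iterated integral $I(\lambda)$ under $S_n$, proved by your interval-splitting argument for simple transpositions (the three cross-terms indeed die because the integrand is a Vandermonde in the $x_{1,i}$ and the domain is a product of intervals, hence symmetric under swapping two coordinates with equal ranges; the boundary cases $k=1$ and $k=n-1$ have fewer terms but work the same way). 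What your approach buys is independence from the general fact that volumes of analogous polytopes extend to polynomials in the support numbers --- everything is an explicit integral --- and it would in principle let you track the constant $c$ recursively; the cost is the combinatorial bookkeeping of the antisymmetry step, which the paper avoids entirely by citing the degeneration of the polytope on every diagonal $\lambda_i=\lambda_j$, not just adjacent ones. One small point worth tightening in your write-up of the first assertion: "no facet collapsing on the open dominant cone" deserves a sentence of justification (each defining equality $x_{ij}=x_{i-1,j}$ or $x_{ij}=x_{i-1,j+1}$ cuts out a genuine facet as long as the $\lambda_i$ are distinct), but this is exactly the part the paper itself dismisses as immediate.
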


\begin{proof} The first part of the proposition is immediate. The second part follows from the fact that $\vol_{GZ(\lambda)}$ is a polynomial of degree $n(n-1)/2$ in $\lambda_1,\dots,\lambda_n$ that vanishes for $\lambda_i=\lambda_j$. Such a polynomial is unique up to a constant.
\end{proof}

Thus, the annihilator ideal of $\vol_{GZ(\lambda)}$ in $\Diff V_{GZ(\lambda)}=\ZZ[\partial/\partial\lambda_{1},\dots,\partial/\partial\lambda_{n}]$ equals the ideal generated by the symmetric polynomials in $\partial/\partial\lambda_{i}$ without the constant term. So we get the following corollary, probably first observed by Kiumars Kaveh. Essentially this is nothing but the Borel presentation for $H^*(\Fl(n))$, which we saw in Theorem~\ref{thm:borel}

\begin{corollary}[{\cite[Corollary~5.3]{Kaveh11}}]  The Khovanskii--Pukhlikov ring $R_{GZ}$ of the Gelfand--Zetlin polytope $GZ(\lambda)\subset\RR^{n(n-1)/2}$ is isomorphic to the cohomology ring of a complete flag variety $\Fl(n)$. An isomorphism is constructed as follows: $\partial/\partial\lambda_i$ is mapped to $-c_1(\ccL_i)$, where $c_1(\ccL_i)$ is the first Chern class of the $i$-th tautological line bundle $\ccL_i$ on $\Fl(n)$.
\end{corollary}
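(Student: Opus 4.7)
The plan is to combine the preceding proposition (which computes the volume polynomial as a Vandermonde) with the Borel presentation of Theorem~\ref{thm:borel}. The latter identifies $H^*(\Fl(n),\ZZ)$ with $\ZZ[x_1,\dots,x_n]/I_{\mathrm{sym}}$, where $I_{\mathrm{sym}}$ is the ideal generated by all symmetric polynomials of positive degree, via the rule $x_i \mapsto -c_1(\ccL_i)$. Since the volume polynomial of $GZ(\lambda)$ is (up to a nonzero constant) the Vandermonde $V(\lambda) = \prod_{i>j}(\lambda_i-\lambda_j)$, the content of the corollary reduces to the purely algebraic identification
\[
\Ann(V) \;=\; I_{\mathrm{sym}} \quad \text{inside} \quad \ZZ[\partial/\partial\lambda_1,\dots,\partial/\partial\lambda_n],
\]
where $\Ann(V)$ is the annihilator of $V$ under the usual action of polynomial differential operators.

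The inclusion $I_{\mathrm{sym}} \subseteq \Ann(V)$ is a quick degree count: for a symmetric $h$ of positive degree $d$, the polynomial $h(\partial)V$ is again $S_n$-antisymmetric (the operator $h(\partial)$ commutes with permutations of the variables) and has degree $\binom{n}{2} - d < \binom{n}{2}$. But every nonzero antisymmetric polynomial in $n$ variables is divisible by $V$ and therefore has degree at least $\binom{n}{2}$, so $h(\partial)V = 0$.

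For the reverse inclusion I would compare $\ZZ$-ranks. The coinvariant algebra $\ZZ[x]/I_{\mathrm{sym}}$ is a free $\ZZ$-module of rank $n!$; one explicit basis is given by the Schubert polynomials $\fS_w$, $w \in S_n$, from Subsection~\ref{ssec:pipedreams}. On the other side, the evaluation map $f \mapsto f(\partial)V$ induces an isomorphism $\ZZ[\partial]/\Ann(V) \cong \ZZ[\partial]\cdot V$ of abelian groups, and the first step shows this image is contained in the space of \emph{harmonic polynomials} $H = \{\,h : g(\partial)h = 0 \text{ for all } g \in I_{\mathrm{sym}}\,\}$. A classical theorem (going back to Chevalley and Steinberg for any finite reflection group) asserts that $H$ is a free $\ZZ$-module of rank $n!$ which is cyclic as a $\ZZ[\partial]$-module, generated precisely by the top-degree harmonic $V$. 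Consequently $\ZZ[\partial]\cdot V = H$ has rank $n!$, and the natural surjection $\ZZ[x]/I_{\mathrm{sym}} \twoheadrightarrow \ZZ[\partial]/\Ann(V)$ between free $\ZZ$-modules of equal rank must be an isomorphism.

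The main obstacle is the cyclicity of the harmonic polynomial module under $\ZZ[\partial]$, i.e. the nontrivial half $\Ann(V) \subseteq I_{\mathrm{sym}}$. In a survey-style paper one would simply cite Chevalley--Steinberg; a self-contained argument can be extracted from the proof that the coinvariant algebra is Poincar\'e dual with socle spanned by the image of $V$ under Poincar\'e duality. Once this kernel is in place, composing the identification $R_{GZ} \cong \ZZ[x]/I_{\mathrm{sym}}$ with the Borel isomorphism $x_i \mapsto -c_1(\ccL_i)$ immediately yields the stated map $\partial/\partial\lambda_i \mapsto -c_1(\ccL_i)$.
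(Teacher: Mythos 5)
Your proposal follows the same route as the paper: identify the volume polynomial of $GZ(\lambda)$ with the Vandermonde determinant, show that its annihilator in $\ZZ[\partial/\partial\lambda_1,\dots,\partial/\partial\lambda_n]$ is the ideal of positive-degree symmetric polynomials, and then invoke the Borel presentation. The only difference is that the paper simply asserts the annihilator identity (deferring to Kaveh), whereas you supply a correct proof of it via the degree count on antisymmetric polynomials and the rank-$n!$ comparison with the coinvariant algebra.
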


\subsection{Representation theory of $\GL(n)$ and Gelfand--Zetlin tableaux}\label{ssec:GZ_rep} Gelfand--Zetlin polytopes were introduced by I.~M.~Gelfand and M.~L.~Zetlin (sometimes also spelled Cetlin or Tsetlin) in 1950 (cf.~\cite{GelfandZetlin50}). The integer points in $GZ(\lambda)$ index a special basis, called the Gelfand--Zetlin basis, in the irreducible representation $V(\lambda)$ with the highest weight $\lambda$ of the group $\GL(n)$. Let us briefly recall some statements about the representation theory of $\GL(n)$ and the construction by Gelfand and Zetlin.

Let $(\CC^*)^n\cong T\subset \GL(n)$ be the subgroup of nondegenerate diagonal matrices, and let $V$ be a representation of $\GL(n)$. We say that $v\in V$ is a \emph{weight vector} if it is a common eigenvector for all diagonal matrices. This means that
\[
 (t_1,\dots,t_n)(v)=t_1^{\lambda_1}\dots t_n^{\lambda_n} v
\]
for some $\lambda=(\lambda_1,\dots,\lambda_n)\in\ZZ^n$. This set of integers is called the \emph{weight} of $v$.

We shall say that $\lambda$ is \emph{dominant} (or, respectively, \emph{antidominant}) if $\lambda_1\geq\dots\geq \lambda_n$ (resp. $\lambda_1\leq\dots\leq \lambda_n$), and \emph{strictly (anti)dominant} if all these inequalities are strict. 

We introduce a partial ordering on the set of weights, saying that $\lambda\preceq\mu$ if $\lambda_1+\dots+\lambda_i\geq \mu_1+\dots+\mu_i$ for each $1\leq i\leq n$. Moreover, a weight vector $v$ is said to be the \emph{highest} (resp.~\emph{lowest}) \emph{weight vector} if it is an eigenvector for the upper-triangular subgroup $B\subset\GL(n)$ (resp. $B^-\subset \GL(n)$):
\[
 b(v)=\lambda(b) v\qquad\text{for each}\quad b\in B.
\]

We say that a $\GL(n)$-module $V(\lambda)$ is a highest-weight (resp. lowest-weight) module with the highest (resp. lowest) weight $\lambda$ if a highest (resp. lowest) weight vector $v\in V(\lambda)$ is unique up to a scalar and has weight $\lambda$. In this case $V(\lambda)$ is spanned by the set of vectors $B^-(v)$ and $B(v)$, respectively. It is not hard to see that in this case $\lambda$ is indeed the highest (resp. lowest) weight in the sense of the partial ordering introduced earlier: for any weight $\mu$ of the module $V(\lambda)$ we have $\mu\preceq \lambda$ (or $\mu\succeq\lambda$, respectively).

The following theorem describes all irreducible rational finite-dimensional representations of $\GL(n)$. It can be found in any textbook on representation theory of Lie groups, such as \cite{FultonHarris91} or \cite{OnishchikVinberg90}. This theorem is usually formulated in terms of highest weights, but we prefer to give its equivalent form involving lowest weights instead.

\begin{theorem} For each antidominant weight $\lambda$ there exists an rational irreducible finite-dimensional $\GL(n)$-module $V(\lambda)$ with the lowest weight $\lambda$.  It is unique up to an isomorphism. Each rational irreducible finite-dimensional $\GL(n)$-module is isomorphic to some $V(\lambda)$.
\end{theorem}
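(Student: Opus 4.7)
The plan is to prove the three assertions in turn: (a) every rational irreducible finite-dimensional $\GL(n)$-module $V$ possesses a one-dimensional lowest weight line, whose $T$-weight is automatically antidominant; (b) the isomorphism class of such a $V$ is determined by its lowest weight; (c) every antidominant $\lambda \in \ZZ^n$ is realized.

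For (a), I would apply the Lie--Kolchin theorem to the connected solvable subgroup $B^- \subset \GL(n)$ acting on $V$: it produces a common eigenvector $v$ with some $T$-weight $\lambda$. By the PBW decomposition $\mathfrak{gl}_n = \mathfrak{u}^- \oplus \mathfrak{t} \oplus \mathfrak{u}^+$, the $\GL(n)$-submodule generated by $v$ is spanned by images of $v$ under monomials in positive root vectors, so its $T$-weights all lie in $\lambda + \ZZ_{\ge 0}\Delta^+$, where $\Delta^+ = \{e_i - e_j : i<j\}$. Irreducibility of $V$ forces this submodule to equal $V$; hence every weight $\mu$ of $V$ satisfies $\mu \succeq \lambda$ in the paper's partial order, the $\lambda$-weight space is one-dimensional, and the lowest weight line is unique. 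Restriction to the $\mathfrak{sl}_2$-triples attached to the simple roots $\alpha_i = e_i - e_{i+1}$, together with the standard theory of finite-dimensional $\mathfrak{sl}_2$-modules, then forces $\lambda_{i+1} - \lambda_i \ge 0$, i.e.\ $\lambda$ is antidominant.

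For (b), given two such modules $V, V'$ sharing lowest weight $\lambda$ with respective lowest weight vectors $v, v'$, I would form the cyclic $\GL(n)$-submodule $M \subset V \oplus V'$ generated by $v + v'$. The two coordinate projections restrict to nonzero $\GL(n)$-equivariant maps $M \to V$ and $M \to V'$, surjective by irreducibility of the targets. Complete reducibility of rational finite-dimensional $\GL(n)$-modules (Weyl's theorem) together with part (a) applied to every irreducible summand of $M$ implies that $M$ has a unique lowest weight line, namely $\CC(v+v')$, so $M$ is itself irreducible and both projections are isomorphisms.

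For (c), I would decompose every antidominant $\lambda = (\lambda_1 \le \cdots \le \lambda_n)$ uniquely as
\[
\lambda = \lambda_1(1,\ldots,1) + \sum_{k=1}^{n-1} b_k \eta_k,
\]
where $\eta_k = (0,\ldots,0,1,\ldots,1)$ has $n-k$ zeros followed by $k$ ones, and $b_k := \lambda_{n-k+1} - \lambda_{n-k} \ge 0$. Since $(1,\ldots,1)$ is the weight of the character $\det : \GL(n) \to \CC^*$ and $\eta_k$ is the lowest weight of $\Lambda^k \CC^n$, the rational finite-dimensional $\GL(n)$-module
\[
W := \det{}^{\otimes \lambda_1} \otimes \bigotimes_{k=1}^{n-1}\bigl(\Lambda^k \CC^n\bigr)^{\otimes b_k}
\]
contains a $B^-$-eigenvector of weight $\lambda$, namely the tensor product of the lowest weight vectors of the factors. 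The irreducible summand of $W$ containing this vector, extracted via complete reducibility, is the desired $V(\lambda)$. The chief obstacle is precisely this step (c): parts (a) and (b) are formal consequences of Lie--Kolchin, $\mathfrak{sl}_2$-theory, and complete reducibility, whereas (c) requires an actual construction of a representation with each prescribed lowest weight, which is achieved here by exploiting the fact that the fundamental weights of $\GL(n)$ are precisely the lowest weights of the exterior powers $\Lambda^k \CC^n$ (twisted by the determinantal character).
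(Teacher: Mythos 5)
The paper does not prove this theorem: it is stated as the classical lowest-weight classification and the reader is simply referred to \cite{FultonHarris91} and \cite{OnishchikVinberg90}. So there is nothing in the paper to compare against line by line; what you have written is the standard textbook proof, and it is correct in outline. The three ingredients — Lie--Kolchin plus the PBW triangular decomposition to produce a lowest weight line with one-dimensional extremal weight space, $\mathfrak{sl}_2$-theory along the simple roots to force antidominance, the diagonal-submodule trick inside $V\oplus V'$ for uniqueness, and the realization of every antidominant weight as the lowest weight of a tensor product of $\det^{\otimes\lambda_1}$ with exterior powers $\Lambda^k\CC^n$ for existence — are exactly the right ones, and your telescoping computation of the weight $\lambda_1(1,\dots,1)+\sum b_k\eta_k$ checks out. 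Two spots deserve slightly more care. In (b), the assertion that $\CC(v+v')$ is the \emph{only} $B^-$-eigenline of $M$ needs the observation that every $B^-$-eigenvector of $V\oplus V'$ already lies in $\CC v\oplus\CC v'$ (because each of $V$, $V'$ has a unique eigenline by part (a)), combined with $\dim M_\lambda=1$; without this a summand of $M$ could a priori have a different lowest weight. In (c), "the irreducible summand of $W$ containing this vector" is imprecise, since a vector need not lie in any single summand of a chosen decomposition; the clean statement is that the cyclic submodule generated by the $B^-$-eigenvector of weight $\lambda$ is itself irreducible, by the same one-dimensional-extremal-weight-space argument used in (b). Both are routine repairs, not gaps in the method.
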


One can also describe the set of all weights of a representation $V(\lambda)$:

\begin{prop} \begin{enumerate}
\item Each weight $\mu$ of $V(\lambda)$ is obtained from the lowest weight by adding a nonnegative integer combination of \emph{simple roots} $\alpha_i=(0,\dots,0,1,-1,0,\dots,0)$, where $1$ is on the $i$-th position, and $1\leq i\leq n-1$:
\[
\mu=\lambda+c_1\alpha_1+\dots+c_{n-1}\alpha_{n-1}, \qquad c_i\in\ZZ_+.
\]
In particular, the sum $\mu_1+\dots+\mu_n$ is equal for all weight vectors occuring in $V(\lambda)$ 

\item The set of weights is symmetric with respect to the action of the symmetric group $S_n$: if $\mu=(\mu_1,\dots,\mu_n)$ is a weight of $V(\lambda)$, then $\sigma(\mu):=(\mu_{\sigma(1)},\dots,\mu_{\sigma(n)})$ is again a weight of $V(\lambda)$. Moreover, the dimensions of their weight spaces are equal.
\end{enumerate}
\end{prop}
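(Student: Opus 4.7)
For the first part, I would start from the fact, just stated, that $V(\lambda)$ is spanned by the $B$-orbit of a lowest weight vector $v_\lambda$. Passing to the Lie algebra $\mathfrak{b}$ of $B$, this means $V(\lambda)$ is spanned by vectors of the form $E_{i_1 j_1}\cdots E_{i_r j_r}v_\lambda$ with $i_s<j_s$, where $E_{ij}$ denotes the elementary matrix with a single $1$ in position $(i,j)$. A direct computation with the torus action shows that $E_{ij}$ maps the weight space $V_\mu$ into $V_{\mu+\varepsilon_i-\varepsilon_j}$, where $\varepsilon_k$ denotes the $k$-th standard basis vector of $\ZZ^n$. For $i<j$ the shift $\varepsilon_i-\varepsilon_j=\alpha_i+\alpha_{i+1}+\dots+\alpha_{j-1}$ is a nonnegative integer combination of simple roots, so iterating this observation shows that every weight of $V(\lambda)$ is of the form $\lambda+\sum c_k\alpha_k$ with $c_k\in\ZZ_+$.

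The ``in particular'' statement is then immediate, since each simple root $\alpha_k$ has coordinate sum zero. Equivalently, the center of $\GL(n)$, consisting of scalar matrices, acts on the irreducible module $V(\lambda)$ by a single character $tI\mapsto t^{\lambda_1+\dots+\lambda_n}$, which forces the sum of coordinates of every weight to equal $\lambda_1+\dots+\lambda_n$.

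For the second part, I would use the classical identification of the Weyl group $N_{\GL(n)}(T)/T$ with $S_n$, with representatives given by permutation matrices. Fix $\sigma\in S_n$ and let $P_\sigma\in\GL(n)$ be the associated permutation matrix; then $P_\sigma T P_\sigma^{-1}=T$, with the conjugation action permuting the diagonal entries. A short index calculation then shows that if $v\in V_\mu$ then $P_\sigma v$ again lies in a weight space, namely the one whose weight is obtained from $\mu$ by the permutation determined by $\sigma$. Since $P_\sigma$ is invertible, this yields a vector-space isomorphism between $V_\mu$ and this permuted weight space, proving simultaneously that the permuted weight is again a weight of $V(\lambda)$ and that its multiplicity equals that of $\mu$. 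Since $\sigma\in S_n$ was arbitrary, one obtains the full $S_n$-symmetry claimed.

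The only subtlety I foresee is pure bookkeeping of conventions in the second part: depending on the convention chosen for the permutation matrix, the induced action on weights comes with a $\sigma$ or a $\sigma^{-1}$. Since the conclusion is quantified over all $\sigma\in S_n$, this convention choice does not affect the validity of the statement. Beyond this, both parts reduce to standard manipulations with the root space decomposition and the Weyl group action on $T$, so I do not anticipate any deep obstacle.
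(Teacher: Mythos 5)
Your proof is correct. Note, however, that the paper does not actually prove this proposition: immediately after its statement the author remarks that it holds in much greater generality for reductive groups and refers the reader to \cite{FultonHarris91} and similar textbooks. Your argument is precisely the standard textbook one: for part (1), passing from $B(v_\lambda)$ to the enveloping algebra of $\mathfrak b$, observing that the raising operators $E_{ij}$ ($i<j$) shift weights by $\varepsilon_i-\varepsilon_j=\alpha_i+\cdots+\alpha_{j-1}$, and deducing the constant coordinate sum either from the vanishing of coordinate sums of simple roots or from Schur's lemma applied to the center; for part (2), conjugation by a permutation matrix representative of $N_{\GL(n)}(T)/T\cong S_n$ permutes the weight spaces and is an isomorphism. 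Your caveat about the $\sigma$ versus $\sigma^{-1}$ bookkeeping is reasonable and, as you note, immaterial since the conclusion is quantified over all of $S_n$. There is nothing to compare against in the paper itself, but your write-up would serve as a self-contained proof where the paper currently has only a citation.
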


Of course, this proposition can be formulated in much greater generality for an arbitrary reductive group instead of $\GL(V)$, with its Weyl group action replacing the action of $S_n$ etc., but we will not need it here. An interested reader will find more details in \cite{FultonHarris91} or any other book on representations of Lie groups or algebraic groups.

Thus, the set of all weights of an irreducible representation $V(\lambda)$ is a finite set in $\ZZ^{n}$. It is contained in the hyperplane $x_1+\dots+x_n=\lambda_1+\dots+\lambda_n$. Its convex hull in $\RR^n=\ZZ^n\otimes \RR$ will be called the \emph{weight polytope} corresponding to $\lambda$ and denoted by $\wt(\lambda)$. It is a convex polytope of dimension $n-1$, symmetric under the standard action of $S_n$

\begin{xca} Show that if $\lambda=(\lambda_1,\lambda_2,\lambda_3)$ is a strictly antidominant weight, the corresponding weight polytope is a hexagon. Find the conditions for this hexagon to be regular. What happens for an antidominant, but not strictly antidominant $\lambda$?
\end{xca}

Gelfand--Zetlin polytopes appear in representation theory in the following way. Consider an irreducible representation $V(\lambda)$ of $\GL(n)$ with the lowest weight $\lambda=(\lambda_1\leq\lambda_2\leq\dots\leq\lambda_n)$ (not necessarily strictly antidominant). Inside $\GL(n)$ we can consider a subgroup stabilizing the subspace spanned by all basis vectors except the last one and the last basis vector; it consists of block-diagonal matrices with a block of size $n-1$ and the identity element in the bottom-right corner. Clearly, it is isomorphic to $\GL(n-1)$.

We can restrict our representation $V(\lambda)$ from $\GL(n)$ to $\GL(n-1)$, i.e., consider $V(\lambda)$ as a representation of the smaller group $\GL(n-1)$. This representation may become reducible; its irreducible components are indexed by their lowest weights $\lambda'=(\lambda_1'\leq\dots\leq\lambda_{n-1}')$:
\[
 \mathrm{Res}_{\GL(n)}^{\GL(n-1)}V(\lambda)=\bigoplus_{\lambda'} V({\lambda'})
\]

A key observation by Gelfand and Zetlin, made in \cite{GelfandZetlin50}, is that this representation of $\GL(n-1)$ is \emph{multiplicity-free}, i.e., all its irreducible components are non-isomorphic. Moreover, for each $\lambda'$ appearing in the decomposition, the following inequalities on the lowest weights $\lambda$ and $\lambda'$ hold:
\begin{equation}\label{eq:GZ_condition}
 \lambda_1\leq\lambda_1'\leq\lambda_2\leq\lambda_2'\leq\dots\leq\lambda_{n-1}'\leq \lambda_n.
\end{equation}

Now let us continue this procedure, restricting each of representations $V({\lambda'})$ to $\GL(n-2)$, and so on, until we reach $\GL(1)=\CC^*$. Each representation of $\CC^*$ is just a one-dimensional space. This means that we obtain a decomposition of $V(\lambda)$ into the direct sum of one-dimensional subspaces, which is defined by the chain of decreasing subgroups $\GL(n)\supset\GL(n-1)\supset\dots\supset\GL(1)$. Picking a vector on each of these lines, we obtain a \emph{Gelfand--Zetlin basis.} The elements of this basis are indexed by sequences of lowest weights of the groups in this chain: $\lambda,\lambda',\lambda'',\dots,\lambda^{(n)}$, such that any two neighboring weights in this sequence satisfy the inequalities~\ref{eq:GZ_condition}. So they are indexed exactly by Gelfand--Zetlin tableaux of type~\ref{eq:GZ}, consisting of integers. One can show that for each starting lowest weight $\lambda$, all possible integer Gelfald--Zetlin tableaux occur, so the Gelfand--Zetlin basis is indexed by the integer points inside the Gelfand--Zetlin polytope $GL(\lambda)$.

We can also consider the projection map that sends each row of a Gelfand--Zetlin tableau into the sum of its elements minus the sum of elements in the previous row, starting with the lowest row:
\[
\pi\colon \RR^{n(n-1)/2}\to \RR^n,\qquad
\begin{pmatrix}
x_{11}\\ \dots\\ x_{1,n-1}\\ \dots\\ x_{n-1,1}
\end{pmatrix}
\mapsto
\begin{pmatrix}
x_{n-1,1}\\x_{n-2,1}+x_{n-2,2}-x_{n-1,1}\\
\dots\\
x_{11}+\dots +x_{1,n-1}-x_{21}-\dots-x_{2,n-2}\\
\lambda_1+\dots+\lambda_n-x_{11}+\dots +x_{1,n-1}\end{pmatrix}.
\]
This map brings $GZ(\lambda)$ into the weight polytope $\wt(\lambda)$ of the representation $V_{\lambda}$.

\subsection{Faces of Gelfand--Zetlin polytopes}

We would like to follow the analogy with the toric case and treat the elements of the Khovanskii--Pukhlikov ring $R_{GZ}$ as linear combinations of faces of the polytope $GZ(\lambda)$. As we have seen before, this polytope is not integrally simple (even not simple). However, it can be \emph{resolved}: we can construct a simple polytope $\widehat{GZ(\lambda)}$ such that $GZ(\lambda)$ is obtained from it by contraction of some faces of codimension greater than one. In particular, this means that there is a natural bijection between the sets of facets of these two polytopes. This allows us to treat the elements of $R_{GZ}=R_{GZ(\lambda)}$ as elements of the bigger ring $\widehat{R_{GZ}}$ of the simple polytope $\widehat{GZ(\lambda)}$. The details of this construction can be found in~\cite[Section~2]{KiritchenkoSmirnovTimorin12} (in particular, see Subsection~2.4, where we treat in detail the example of a three-dimensional Gelfand--Zetlin polytope).

Let us describe the set of faces of the Gelfand--Zetlin polytope and the relations among them in $R_{GZ}$ and in $\widehat{R_{GZ}}$. The polytope is defined by a set of inequalities, represented by the diagram~\ref{eq:GZ}. Each face is obtained by turning some of these inequalities into equalities. In particular, each facet is defined by a unique equation: $x_{ij}=x_{i-1,j}$ or $x_{ij}=x_{i-1,j+1}$ for some pair $(i,j)$, where $i+j\leq n$. (We suppose that $x_{0,k}=\lambda_k$). Denote the facets of the first type by $F_{ij}$, and the facets of the second type by $F^-_{ij}$.

By differentiating the volume polynomial we can obtain all linear relations on facets:

\begin{prop}[{\cite[Proposition~3.2]{KiritchenkoSmirnovTimorin12}}]\label{prop:relations} 
The following linear relations hold in $\widehat{R_{GZ}}$:
\begin{equation}\label{eq:general}
 F_{ij}+F^-_{i+1,j-1}=F_{i,j}^-+F_{i+1,j}.
\end{equation}

 Moreover, all linear relations in $\widehat{R_{GZ}}$ are generated by these.
\end{prop}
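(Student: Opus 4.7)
The approach is standard for first-order relations in a Khovanskii--Pukhlikov ring: recognize each of the stated four-term relations as a translation relation, and then use a dimension count to verify these exhaust the kernel. Throughout I work with the simple resolution $\widehat{GZ(\lambda)}$ so that the degree-one piece of $\widehat{R_{GZ}}$ really is $\bigoplus_F \ZZ\cdot F$ modulo the linear relations we want to identify, with $F$ ranging over facets.

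\textbf{Step 1: The relations hold.} For a fixed interior index $(i,j)$, I would first observe that among the facet-defining inequalities of $GZ(\lambda)$, exactly four involve the coordinate $x_{ij}$, namely
\[
x_{i-1,j}\le x_{ij},\quad x_{ij}\le x_{i-1,j+1},\quad x_{ij}\le x_{i+1,j-1},\quad x_{i+1,j}\le x_{ij},
\]
corresponding to $F_{ij}$, $F^-_{ij}$, $F^-_{i+1,j-1}$, and $F_{i+1,j}$ respectively. Translating $\widehat{GZ(\lambda)}$ by the standard basis vector $e_{ij}\in\RR^{n(n-1)/2}$ shifts the support numbers of these four facets by $-1,+1,-1,+1$ respectively and leaves every other support number unchanged. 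Since volume is translation invariant, the corresponding linear combination of partial derivatives annihilates $\vol_{\widehat{GZ(\lambda)}}$, which gives the relation $F_{ij}+F^-_{i+1,j-1}=F^-_{ij}+F_{i+1,j}$ in $\widehat{R_{GZ}}$. At boundary indices (e.g.\ $j=1$ or $i+j=n$) one or more of the four facets is absent and the relation degenerates accordingly; the translation argument handles these uniformly because the missing inequalities simply do not contribute.

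\textbf{Step 2: These relations generate all linear relations.} I would argue by a dimension count. For the simple polytope $\widehat{GZ(\lambda)}$, the degree-one annihilator of $\vol$ is spanned precisely by the image of the translation map
\[
\tau\colon\RR^{n(n-1)/2}\longrightarrow \bigoplus_F\RR\cdot F,\qquad v\longmapsto \sum_F \langle v,n_F\rangle F,
\]
where $n_F$ is the outward normal to $F$. The translations $\tau(e_{ij})$ are exactly the four-term relations produced in Step 1, so it suffices to check that $\tau$ is injective, i.e.\ that the normals $\{n_F\}$ span the whole ambient space $\RR^{n(n-1)/2}$. This is verified by a downward induction on $i$: the facets $F_{1j}$ and $F^-_{1j}$ (with $j=1,\dots,n-1$) give the normals $\pm e_{1j}$, and inductively $e_{i-1,j}-e_{ij}$ and $e_{ij}-e_{i-1,j+1}$ appear among normals of $F_{ij}$, $F^-_{ij}$, which together with the previously obtained $e_{i-1,*}$ yield all $e_{ij}$.

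\textbf{Main obstacle.} The delicate point is the transition between $GZ(\lambda)$ itself and its resolution $\widehat{GZ(\lambda)}$. Because $GZ(\lambda)$ is highly non-simple, one cannot directly apply the simple-polytope formalism to compute $R_{GZ}$ in degree one; the passage through $\widehat{R_{GZ}}$ (where facets of $\widehat{GZ(\lambda)}$ are in bijection with those of $GZ(\lambda)$, defined by the same linear equations in $\RR^{n(n-1)/2}$) is what makes the translation argument applicable. Establishing that the full annihilator of $\vol$ in degree one is exhausted by translations requires checking that the reduced volume polynomial $\overline{\vol}$ on $\overline{V_{\widehat{GZ(\lambda)}}}$ has no additional linear invariant directions; this can be seen by locating a simple vertex of $\widehat{GZ(\lambda)}$ at which the $n(n-1)/2$ adjacent support numbers form an independent set of local coordinates on $\overline{V_{\widehat{GZ(\lambda)}}}$, so that the partial derivatives of $\overline{\vol}$ in these directions are linearly independent.
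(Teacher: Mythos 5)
Your proposal is correct and follows essentially the same route as the cited proof in \cite{KiritchenkoSmirnovTimorin12}: the four-term relations are the degree-one consequences of translation-invariance of the volume polynomial of the simple resolution $\widehat{GZ(\lambda)}$ (one relation per coordinate direction $e_{ij}$, degenerating to three- or two-term relations at the boundary of the triangular tableau), and they exhaust all linear relations because for a simple polytope the degree-one annihilator of $\vol$ is exactly the space of translations, which your spanning argument for the normals identifies with the span of the $\tau(e_{ij})$. One cosmetic slip: your third and fourth displayed inequalities should read $x_{i+1,j-1}\le x_{ij}$ and $x_{ij}\le x_{i+1,j}$ (the entry $x_{i+1,j-1}$ lies between $x_{i,j-1}$ and $x_{i,j}$, and $x_{i+1,j}$ between $x_{i,j}$ and $x_{i,j+1}$); the shifts $(-1,+1,-1,+1)$ you then use are the ones corresponding to these corrected inequalities, so the relation you derive is the right one, but as literally written the inequalities would give the wrong signs.
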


We will represent faces of the Gelfand--Zetlin polytope symbolically by diagrams obtained from Gelfand--Zetlin tableaux by replacing all $\lambda_i$'s and $x_{ij}$'s  by dots, where each equality of type $x_{ij}=x_{i+1,j-1}$ or $x_{ij}=x_{i+1,j}$ is represented by an edge joining these dots. 


\begin{example} Consider again the Gelfand--Zetlin polytope in dimension 3. Denote its facets by $\Gamma_1$, $\Gamma_2$, $F_1$, $F_2$, $F_3$, $F_4$, as shown on Figure~\ref{fig:GZpolytope} ($\Gamma_1$ and $\Gamma_2$ are the two ``invisible'' trapezoid facets). Then the diagrams corresponding to these facets are shown on Figure~\ref{fig:facets}.
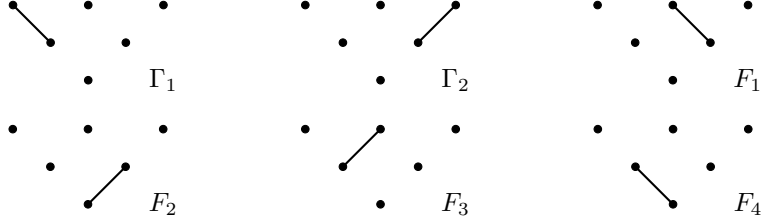
\begin{figure}[h!]
\begin{tikzpicture}  
 \draw[fill] (0,1) circle [radius=0.05];
 \draw[fill] (1,1) circle [radius=0.05];
 \draw[fill] (2,1) circle [radius=0.05];
 \draw[fill] (0.5,0.5) circle [radius=0.05];
 \draw[fill] (1.5,0.5) circle [radius=0.05];
 \draw[fill] (1,0) circle [radius=0.05];
 \draw[thick] (0,1)--(0.5,0.5);
\node at (2,0) {$\Gamma_1$};
\end{tikzpicture} 
\qquad\qquad
\begin{tikzpicture}   
 \draw[fill] (0,1) circle [radius=0.05];
 \draw[fill] (1,1) circle [radius=0.05];
 \draw[fill] (2,1) circle [radius=0.05];
 \draw[fill] (0.5,0.5) circle [radius=0.05];
 \draw[fill] (1.5,0.5) circle [radius=0.05];
 \draw[fill] (1,0) circle [radius=0.05];
 \draw[thick] (2,1)--(1.5,0.5);
\node at (2,0) {$\Gamma_2$};
\end{tikzpicture} 
\qquad \qquad
\begin{tikzpicture}  
 \draw[fill] (0,1) circle [radius=0.05];
 \draw[fill] (1,1) circle [radius=0.05];
 \draw[fill] (2,1) circle [radius=0.05];
 \draw[fill] (0.5,0.5) circle [radius=0.05];
 \draw[fill] (1.5,0.5) circle [radius=0.05];
 \draw[fill] (1,0) circle [radius=0.05];
 \draw[thick] (1,1)--(1.5,0.5);
 \node at (2,0) {$F_1$};
\end{tikzpicture} 

\bigskip
\begin{tikzpicture}  
 \draw[fill] (0,1) circle [radius=0.05];
 \draw[fill] (1,1) circle [radius=0.05];
 \draw[fill] (2,1) circle [radius=0.05];
 \draw[fill] (0.5,0.5) circle [radius=0.05];
 \draw[fill] (1.5,0.5) circle [radius=0.05];
 \draw[fill] (1,0) circle [radius=0.05];
 \draw[thick] (1,0)--(1.5,0.5);
 \node at (2,0) {$F_2$};
\end{tikzpicture} 
\qquad \qquad
\begin{tikzpicture}

 \draw[fill] (0,1) circle [radius=0.05];
 \draw[fill] (1,1) circle [radius=0.05];
 \draw[fill] (2,1) circle [radius=0.05];
 \draw[fill] (0.5,0.5) circle [radius=0.05];
 \draw[fill] (1.5,0.5) circle [radius=0.05];
 \draw[fill] (1,0) circle [radius=0.05];
 \draw[thick] (1,1)--(0.5,0.5);
 \node at (2,0) {$F_3$};
\end{tikzpicture} 
\qquad\qquad
\begin{tikzpicture}  
 
 \draw[fill] (0,1) circle [radius=0.05];
 \draw[fill] (1,1) circle [radius=0.05];
 \draw[fill] (2,1) circle [radius=0.05];
 \draw[fill] (0.5,0.5) circle [radius=0.05];
 \draw[fill] (1.5,0.5) circle [radius=0.05];
 \draw[fill] (1,0) circle [radius=0.05];
 \draw[thick] (1,0)--(0.5,0.5);
\node at (2,0) {$F_4$};
\end{tikzpicture} 
\caption{Face diagrams of facets of $GZ(\lambda)\subset\RR^3$}\label{fig:facets}
\end{figure}

From Proposition~\ref{prop:relations} we conclude that there are three independent linear relations on these faces:

\begin{eqnarray}\label{eq:relations}
 [\Gamma_1] &=& [F_3] + [F_4]\nonumber\\ \relax
 [\Gamma_2]  &=& [F_2] + [F_1]\\ \relax
 [F_2] &=& [F_4]\nonumber
\end{eqnarray}
\end{example}

\begin{remark}
 These linear relations also imply some nonlinear ones. For instance, we can take the four  face diagrams in the four-term relation from Proposition~\ref{prop:relations} and impose the same set of additional equalities on each of them; this would give a nonlinear four-term relation.
\end{remark}

\begin{xca} Show that for $GZ(\lambda)\subset\RR^3$, there are the following equalities on edges:
\[
 e_1=e_3=e_5\qquad\text{and} \qquad e_2=e_4=e_6
\]
(see Figure~\ref{fig:GZpolytope}).
\end{xca}

\subsection{Representing Schubert varieties by linear combinations of faces}

We have seen that elements of the cohomology ring of a full flag variety can be viewed as elements of $\widehat{R_{GZ}}$, i.e., as linear combinations of faces of the corresponding Gelfand--Zetlin polytope modulo the relations described in the previous subsection. Our next goal is to find a presentation for a given Schubert class $\sigma_w$ in $\widehat{R_{GZ}}\supset R_{GZ}$. This construction resembles the construction of pipe dreams. 

We will present $\sigma_w$ as a linear combination of faces of a certain special form, the so-called \emph{Kogan faces}. They were introduced in the Ph.D. thesis of Mikhail Kogan~\cite{Kogan00}.

\begin{definition} A face $F$ of $GZ(\lambda)$ is called a \emph{Kogan face} if it is obtained as the intersection of facets $F_{ij}$ for some $i,j$. Equivalently, $F$ is a Kogan face is it contains the vertex defined by the equations 
\begin{eqnarray*}
\lambda_1&=&x_{11}=x_{21}=\dots=x_{n-1,1},\\
\lambda_2&=&x_{12}=x_{22}=\dots=x_{n-2,2},\\
&&\dots\\
\lambda_{n-1}&=&x_{1,n-1}.
\end{eqnarray*}
\end{definition}

Now let us return to face diagrams from the previous subsection. Let $F$ be a Kogan face; all edges in its diagram go from northwest to southeast. We mark the edge going from $x_{i-1,j}$ to $x_{i,j}$ by a simple transposition $s_{i+j-1}\in S_n$ (recall that $1\leq i,j$ and $i+j\leq n$), as shown on Figure~\ref{fig:koganface}.

\begin{figure}[h!]
 \begin{tikzpicture}  
 \draw[fill] (-1,3) circle [radius=0.05];
 \draw[fill] (1,3) circle [radius=0.05];
 \draw[fill] (3,3) circle [radius=0.05];
 \draw[fill] (5,3) circle [radius=0.05];
 \draw[fill] (0,2) circle [radius=0.05];
 \draw[fill] (2,2) circle [radius=0.05];
 \draw[fill] (4,2) circle [radius=0.05];
 \draw[fill] (1,1) circle [radius=0.05];
 \draw[fill] (3,1) circle [radius=0.05];
 \draw[fill] (2,0) circle [radius=0.05];
 \draw[thick] (2,0)--(-1,3);
 \draw[thick] (3,3)--(4,2);
 \node[below] at (-0.5,2.5) {$s_1$};
 \node at (1.5,2.5) {$s_2$};
 \node[below] at (3.5,2.5) {$s_3$};
 \node[below] at (0.5,1.5) {$s_2$};
 \node at (2.5,1.5) {$s_3$};
 \node[below] at (1.5,0.5) {$s_3$};
\end{tikzpicture} 
\caption{The diagram of a Kogan face}\label{fig:koganface}
\end{figure}
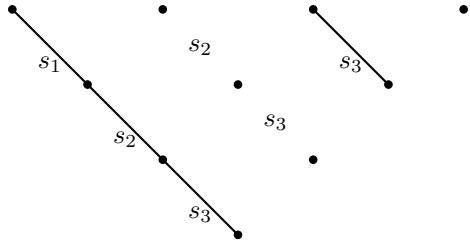

Now we take the word in $s_1,\dots,s_{n-1}$ obtained by reading the letters on the edges \emph{from bottom to top from left to right}. Thus, the diagram on Figure~\ref{fig:koganface} will produce the word $\underline{w}(F)=(s_3,s_2,s_1,s_3)$.

\begin{definition} Let $F$ be a Kogan face of codimension $k$, and let $\underline{w}(F)=(s_{i_1},\dots,s_{i_k})$ be the corresponding word. $F$ is said to be \emph{reduced} if the word $\underline{w}(F)$ is reduced, i.e., if $\ell(s_{i_1}\dots s_{i_k})=k$. In this case we will say that $F$ \emph{corresponds to the permutation} $w(F)=s_{i_1}\dots s_{i_k}$.
\end{definition}

\begin{example}
 The face shown on Figure~\ref{fig:koganface} is reduced; it corresponds to the permutation $s_3s_2s_1s_3=(4231)$.
\end{example}

\begin{example} Let $F$ be defined by equations $x_{12}=\lambda_2$, $x_{11}=x_{21}$. Then the corresponding word equals $\underline{w}(F)=(s_2,s_2)$, and $F$ is not reduced.
\end{example}

\begin{xca} Describe a natural bijection between reduced Kogan faces corresponding to $w\in S_n$ and pipe dreams with the same permutation.
\end{xca}

The following theorem is a direct analogue of the Fomin--Kirillov theorem (Theorem~\ref{thm:fominkirillov}). It shows that
each Schubert cycle can be represented by a sum of faces in exactly the same way as 
the corresponding Schubert polynomial can be represented by a sum of monomials.

\begin{theorem}[{\cite[Theorem~4.3]{KiritchenkoSmirnovTimorin12}}]\label{thm:main} A Schubert cycle $\sigma_w$, regarded as an element of the Gelfand--Zetlin polytope ring, can be represented by the sum of all reduced Kogan faces corresponding to the permutation $w$:
\[
 \sigma_w=\sum_{w(F_i)=w} [F_i]\in\widehat{R_{GZ}}.
\]
\end{theorem}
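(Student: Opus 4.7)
The plan is to prove the theorem by induction on $\ell(w)$, using the Chevalley--Monk formula to pass from $w$ to permutations of length one greater. The base case $w = \mathrm{id}$ is immediate: $\sigma_{\mathrm{id}} = 1 \in R_{GZ}$ equals the whole polytope viewed as a degree-zero element, which is the unique (empty) Kogan face, with empty --- vacuously reduced --- word. The divisor case $w = s_k$ should be treated next: codimension-one Kogan faces are single facets $F_{ij}$, whose unique edges are labeled $s_{i+j-1}$, so one needs to verify that
\[
\sigma_{s_k} \;=\; \sum_{i+j = k+1} [F_{ij}]
\]
in $\widehat{R_{GZ}}$. This identification would follow by combining the Borel presentation (Theorem~\ref{thm:borel}) with Kaveh's isomorphism $-c_1(\ccL_i) \leftrightarrow \partial/\partial\lambda_i$ from Subsection~\ref{ssec:gzpoly}, and using the linear relations of Proposition~\ref{prop:relations} to translate differential operators into explicit face combinations.

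Given the formula for $\sigma_w$ and $\sigma_{s_k}$, the inductive step reduces to a local statement about the product $[F] \cdot [F_{ij}]$ for a reduced Kogan face $F$. Geometrically, imposing the additional equation $x_{ij} = x_{i-1,j}$ on $F$ produces (generically) a new Kogan face $F'$ whose diagram has one extra edge, and whose word $\underline{w}(F')$ is obtained from $\underline{w}(F)$ by inserting $s_{i+j-1}$ at the position dictated by the bottom-to-top, left-to-right reading rule. Summing over all such pairs, the contributions must be collated modulo the four-term relation $F_{ij} + F^-_{i+1,j-1} = F^-_{i,j} + F_{i+1,j}$ (and its nonlinear consequences), and then matched against the Monk formula
\[
\sigma_w \cdot \sigma_{s_k} \;=\; \sum_{j \leq k < \ell,\; \ell(w t_{j\ell}) = \ell(w) + 1} \sigma_{w t_{j\ell}}.
\]

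The hard part will be managing the non-reduced extensions: when inserting a letter into $\underline{w}(F)$ fails to produce a reduced word, the resulting face must vanish in $\widehat{R_{GZ}}$ after applying the relations of Proposition~\ref{prop:relations}. In parallel, one needs a precise bijection between reduced extensions of reduced Kogan faces for $w$ and reduced Kogan faces for each permutation $w t_{j\ell}$ appearing on the right of Monk's formula. This Pieri-type combinatorics on the Gelfand--Zetlin polytope --- verifying both the cancellation of non-reduced contributions and the enumerative bijection on reduced ones --- is the combinatorial core of the argument and is the main obstacle I anticipate. An appealing alternative route, which trades this obstacle for a different one, would be to define divided difference operators $\partial_i$ directly on face combinations so that Theorem~\ref{thm:divdiff} becomes the engine of the induction; the difficulty then becomes checking that these face-level operators are well-defined modulo the relations of Proposition~\ref{prop:relations} and act compatibly with the concatenation of Kogan words.
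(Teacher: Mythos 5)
Your outline is a program rather than a proof, and the part you defer (``the combinatorial core'') is essentially all of the content. Note first that this survey contains no proof of Theorem~\ref{thm:main}; the proof in \cite{KiritchenkoSmirnovTimorin12} is not a Monk-type induction at all but a global argument. An element of the polytope ring of codimension $k$ is determined by the polynomial it produces when applied to the volume polynomial; for $\sum_i[F_i]$ this is $\sum_i\vol F_i(\lambda)$, while for $\sigma_w$ it is the normalized degree polynomial $\lambda\mapsto\deg_\lambda X_w'$. These are matched by passing to lattice-point counts, i.e.\ by the Demazure character statement (Theorem~\ref{thm:demazure}, resting on Kogan's thesis and the Kogan--Miller toric degeneration mentioned in the Remark), and then extracting leading terms; so the logical order is essentially the reverse of the order in which the survey presents the two theorems.

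As for your route, there are three concrete gaps beyond the one you admit. First, even if you prove that the face-sums $\Sigma_v:=\sum_{w(F)=v}[F]$ satisfy $\Sigma_w\cdot\sigma_{s_k}=\sum\Sigma_{wt_{jl}}$, Monk's recursion only yields $\sum_{v}(\Sigma_v-\sigma_v)=0$ over each Monk set; you must still argue that this linear system forces $\Sigma_v=\sigma_v$ for each individual $v$ (for instance via a transition-type instance of Monk's formula or a Poincar\'e duality pairing), and you do not address this. Second, the identity $[F]\cdot[F_{ij}]=[F\cap F_{ij}]$ requires transversality; when $F$ already lies in $F_{ij}$ you must first replace $[F_{ij}]$ by an equivalent transversal combination using Proposition~\ref{prop:relations}, which destroys the ``insert one letter into $\underline{w}(F)$'' picture --- compare the survey's own computation of $\sigma_{s_1}^2$, where $[\Gamma_1]$ must be traded for $[F_3]+[F_4]$ before intersecting. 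Third, the cancellation of non-reduced faces together with the bijection onto the Monk terms is precisely a ``Monk rule for Kogan faces,'' which is not established anywhere and would itself be a new combinatorial theorem. Your alternative suggestion --- divided difference operators acting directly on combinations of faces --- is in fact much closer to arguments that are known to work (the mitosis-on-faces machinery developed after \cite{KiritchenkoSmirnovTimorin12}); if you want an inductive proof, pursue that descent from $w_0$ rather than the Monk ascent from the identity.
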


\begin{remark}
Despite the similarity between this theorem and the Fomin--Kirillov theorem, the
former cannot be formally deduced from the latter, since there is no term-by-term 
equality between monomials in the Schubert polynomial $\fS_w$ (which always lie in
the ring $R_{GZ}$) and the faces corresponding to the permutation $w$, which do not necessarily belong to $R_{GZ}$.
\end{remark}

\begin{remark} This correspondence between Schubert cycles and combinations of faces can be described geometrically in the following way. Consider a full flag variety $\Fl(n)\hookrightarrow \PP V(\lambda)$ embedded into the projectivization of the irreducible representation of $\GL(n)$ with a strictly dominant highest weight $\lambda$. It admits a toric degeneration, constructed by N.~Gonciulea and V.~Lakshmibai in \cite{GonciuleaLakshmibai96}. The exceptional fiber of this degeneration is a singular toric variety $\Fl^0(n)$ corresponding to the Gelfand--Zetlin polytope $GZ(\lambda)$. The images of Schubert varieties under this degeneration are (possibly reducible) $T$-stable subvarieties of $\Fl^0(n)$. This gives us the same presentation as in Theorem~\ref{thm:main}: each of their irreducible components is a Kogan face of $GZ(\lambda)$. The details can be found in \cite{KoganMiller05}. 
\end{remark}

\begin{example} Let $w=s_k$. Then there are $k$ faces of codimension 1 corresponding to $w$, and the Schubert divisor $\sigma_{s_k}$ is represented as
\[
 \sigma_{s_k}=[F_{1,k}]+[F_{2,k-1}]+\dots+[F_{k,1}].
\]
\end{example}

\begin{example} For $n=3$, we have the following presentation of Schubert cycles by faces of the Gelfand--Zetlin polytope (we keep the notation from Figure~\ref{fig:GZpolytope}):
\begin{eqnarray}
 \sigma_{s_1}&=&[\Gamma_1];\nonumber\\ \relax
 \sigma_{s_2}&=&[F_1]+[F_4];\nonumber\\ \relax
 \sigma_{s_1s_2}&=&[e_1];\label{eq:present}\\ \relax
 \sigma_{s_2s_1}&=&[e_6];\nonumber\\ \relax
 \sigma_{s_1s_2s_1}&=&[pt]\nonumber.
\end{eqnarray}
(The longest permutation corresponds to the class of point).
\end{example}

This presentation allows us to compute products of Schubert varieties. To multiply two cycles, $\sigma_w$ and $\sigma_v$, we need to represent them by linear combinations of mutually transversal faces and intersect these sets of faces. Using the relations in $\widehat{R_{GZ}}$, we can represent the result as the sum of certain Kogan faces; this sum corresponds to the linear combination of Schubert cycles $\sum c_{wv}^u \sigma_u=\sigma_w\cdot\sigma_v$.

Let us show this procedure on examples for $n=3$.
\begin{example} To begin with, let us multiply $\sigma_{s_1}$ by $\sigma_{s_2}$. Using (\ref{eq:present}), we write
\begin{multline*}
 \sigma_{s_1}\cdot \sigma_{s_2}=[\Gamma_1]\cdot ([F_1]+[F_4])=[\Gamma_1\cap F_1]+[\Gamma_1\cap F_4]=
[e_1]+[e_6]=\sigma_{s_1s_2}+\sigma_{s_2s_1}.
\end{multline*}

Here is another example. Compute $\sigma_{s_1}^2$. Here the equalities~(\ref{eq:present}) are not enough, since $\Gamma_1$ is not transversal to itself. So we need to replace one of the factors $[\Gamma_1]$ by an equivalent transversal combination of faces, using the relations~(\ref{eq:relations}):
\[
 \sigma_{s_1}^2=[\Gamma_1]\cdot([F_3]+[F_4])=[\Gamma_1\cap F_3]+[\Gamma_1\cap F_4]=0+[e_6]=\sigma_{s_2s_1}.
\]
The product $[\Gamma_1]\cdot[F_3]$ is zero since the corresponding faces do not intersect.
\end{example}

It turns out that the product of any two Schubert cycles can be computed in such a way:

\begin{theorem} For any two permutations $w$ and $v$, there are presentations of the corresponding Schubert cycles
\[
 \sigma_w=\sum [F_i]\qquad\text{and}\qquad \sigma_v=\sum [F_j'],
\]
such that each face $F_i$ is transversal to each of the $[F_j']$.
\end{theorem}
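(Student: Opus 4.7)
The plan is to keep the Kogan-face presentation of $\sigma_w$ from Theorem~\ref{thm:main} and to replace the presentation of $\sigma_v$ by an analogous one built from the opposite facets $F^-_{ij}$, so that the two families meet transversally for purely linear-algebraic reasons.

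First I would introduce \emph{dual Kogan faces}: a face of $GZ(\lambda)$ is dual Kogan if it is an intersection of facets $F^-_{ij}$. The Gelfand--Zetlin tableau carries a natural left--right involution $\iota$ sending $\lambda_k\mapsto\lambda_{n+1-k}$ and $x_{ij}\mapsto x_{i,n-i-j+1}$. One checks that $\iota$ swaps each $F_{ij}$ with the facet $F^-_{i,n-i-j+1}$, preserves the volume polynomial up to the sign of the reflection, and therefore descends to a graded ring automorphism of $R_{GZ}$. Under the identification $R_{GZ}\cong H^*(\Fl(n))$, this automorphism realizes the Weyl involution $u\mapsto w_0 u w_0$ on $\Fl(n)$, and so sends $\sigma_u$ to $\sigma_{w_0 u w_0}$. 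Applying $\iota$ to Theorem~\ref{thm:main} then yields a dual version: every Schubert class admits a presentation as a sum of reduced dual Kogan faces, where reducedness and the associated permutation are defined by reading edge labels in the reflected tableau. In particular, we may write $\sigma_v=\sum_j [F_j']$ with each $F_j'$ a dual Kogan face.

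Next I would verify pairwise transversality. A Kogan face $F_i$ is cut out in $GZ(\lambda)$ by equations of the form $x_{ab}=x_{a-1,b}$ (``north'' edges of the tableau), while a dual Kogan face $F_j'$ is cut out by equations $x_{cd}=x_{c-1,d+1}$ (``northeast'' edges). The defining functionals of the first type relate coordinates in the same column of the tableau, whereas those of the second type relate coordinates across neighboring columns; no $\ZZ$-linear combination of equations of one type can reproduce any equation of the other. Hence the full system defining $F_i\cap F_j'$ is linearly independent, so $\codim(F_i\cap F_j')=\codim F_i+\codim F_j'$ whenever the intersection is nonempty, and the tangent spaces at a general point of the intersection span the tangent space of the polytope. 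Passing to the smooth resolution $\widehat{GZ(\lambda)}$, where the Khovanskii--Pukhlikov pairing has a clean geometric meaning, this upgrades to genuine transversality. Combining the two presentations then proves the theorem.

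The main obstacle I anticipate is the first step: establishing rigorously that $\iota$ induces the $w_0$-twist on Schubert cycles. The cleanest justification is geometric, using the Gonciulea--Lakshmibai toric degeneration mentioned in the remark after Theorem~\ref{thm:main}: performing that degeneration with respect to the opposite Borel subgroup interchanges the two families of facets and naturally sends Schubert limits to dual Kogan faces. A parallel combinatorial justification is available by checking that reading Kogan edge labels from top to bottom and right to left reverses each reduced word and conjugates every $s_k$ to $s_{n-k}$, which is exactly the effect of conjugation by $w_0$ on $S_n$; either route yields the required dual presentation of $\sigma_v$ and completes the proof.
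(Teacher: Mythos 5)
The paper states this theorem without proof, referring to \cite{KiritchenkoSmirnovTimorin12} for the details. Your overall strategy---keep the Kogan-face presentation of $\sigma_w$, introduce \emph{dual Kogan faces} built from the facets $F^-_{ij}$, invoke the left--right reflection symmetry of the Gelfand--Zetlin tableau to obtain a dual version of Theorem~\ref{thm:main} presenting every Schubert class as a sum of reduced dual Kogan faces, and then pair the two families---is exactly the route taken in that paper. A small correction: to preserve the polytope the involution should also flip signs, $\lambda_k\mapsto -\lambda_{n+1-k}$, since the plain reflection reverses all Gelfand--Zetlin inequalities; and whether the induced map on $S_n$ is $u\mapsto w_0uw_0$ or a close variant (such as $u\mapsto w_0u^{-1}w_0$) deserves a careful check, though for the present theorem only the fact that it permutes the Schubert basis is needed.

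The genuine gap is in the transversality argument. Linear independence of the linear forms cutting out $F_i$ and $F_j'$ does \emph{not} imply $\codim(F_i\cap F_j')=\codim F_i+\codim F_j'$ inside $GZ(\lambda)$: the polytope is not simple and the intersection can be pushed to lower dimension by the remaining inequalities. Already for $n=3$ take $F_i=F_{21}=\{x_{21}=x_{11}\}$ and $F_j'=F^-_{21}=\{x_{21}=x_{12}\}$ (these are $F_4$ and $F_2$ in Figure~\ref{fig:facets}); their defining equations are linearly independent, yet their intersection in $GZ(\lambda)$ is the single vertex $x_{11}=x_{12}=x_{21}=\lambda_2$, of codimension $3$ rather than $2$. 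So the middle of your second paragraph is false, and appealing to the resolution $\widehat{GZ(\lambda)}$ afterwards is not an ``upgrade'' of that claim but must be the actual argument, made directly. It is also shorter: a Kogan face is an intersection of facets from the family $\{F_{ab}\}$, a dual Kogan face an intersection of facets from the \emph{disjoint} family $\{F^-_{cd}\}$, and these two families remain disjoint under the bijection between facets of $GZ(\lambda)$ and of the simple resolution $\widehat{GZ(\lambda)}$. In a simple polytope a monomial of distinct derivatives $\partial/\partial h_{i_1}\cdots\partial/\partial h_{i_k}$ equals the class of the face $F_{i_1}\cap\cdots\cap F_{i_k}$ (zero if that face is empty), so for any two faces cut out by disjoint facet sets the product of classes is automatically the class of the intersection. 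That is precisely transversality in $\widehat{R_{GZ}}$, and no linear algebra over $GZ(\lambda)$ itself is needed.
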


However, it is unclear whether the sum $\sigma_w\cdot\sigma_v=\sum_{i,j} [F_i\cap F_j]$ can be replaced by a linear combination of Kogan faces \emph{in a positive way}, that is, by using the relations~\ref{eq:general} without any subtractions. A positive answer to this question would imply a combinatorial proof of the positivity of structure constants for $H^*(\Fl(n),\ZZ)$. Now this is known to be true only for $n\leq 4$; this was shown by I.~Kochulin~\cite{Kochulin13} by direct computation.

\subsection{Demazure modules} The presentation of Schubert cycles by combinations of faces of Gelfand--Zetlin polytopes keeps track of some geometric information on Schubert varieties. As one example, we will describe the method of computing the degree of Schubert varieties.

Let $\lambda=(\lambda_1,\dots,\lambda_n)$ be a strictly antidominant weight, i.e., $\lambda_1<\lambda_2<\dots<\lambda_n$. As we discussed in the previous subsection, there exists a unique representation of $\GL(n)$ with this lowest weight; denote it by $V(\lambda)$. 
Let $v_-\in V(\lambda)$ be the lowest weight vector; this means that the line $\CC\cdot v_-$ is stable under the action of the lower-triangular subgroup $B^-\subset\GL(n)$. Since $\lambda$ is strictly antidominant, the stabilizer of $\CC\cdot v_-$ equals $B^-$ (for a non-strictly dominant highest weight, it can be bigger than $B^-$), so the $\GL(n)$-orbit of $\CC\cdot v_-$ in $\PP V(\lambda)$ is isomorphic to the full flag variety $\GL(n)/B\cong\Fl(n)$. So we have constructed an embedding of $\Fl(n)$ into $\PP V(\lambda)$.

Let us take the Schubert decomposition of $\Fl(n)$ associated with the lower-triangular subgroup $B^-$: the corresponding Schubert cells $\Omega_w'$ are just the orbits of the left action of $B^-$ on $\Fl(n)$. It turns out  that they behave nicely under this embedding: they are cut out from $\Fl(n)\subset \PP V(\lambda)$ by  projective subspaces. To make a more precise statement, we need the following definition.

\begin{definition} Let $w\in S_n$ be a permutation. Consider the vector $w_0w\cdot v_-$ and take the minimal $B$-submodule of $V(\lambda)$ containing $w_0w\cdot v_-$. Such a $B^-$-submodule is called a \emph{Demazure module} and denoted by $D_w(\lambda)$.
\end{definition}

\begin{example} The ``extreme cases'' are as follows: if $w=id$, the Demazure module equals the whole $\GL(n)$-representation space: $D_{id}(\lambda)=V(\lambda)$. For $w=w_0$ the vector $w_0^2v_-=v_-$ is the lowest weight vector, so it is $B^-$-stable, and $D_{w_0}(\lambda)=\CC\cdot v_-$.
\end{example}

\begin{remark} Demazure modules can also be described in terms of sections of line bundles on Schubert varieties: $D_w(\lambda)$ is the dual space to the space of global sections $H^0(X_w',\ccL_\lambda|_{X_w'})$, where $\ccL_\lambda|_{X_w'}$ is the restriction to $X_w'$ of the tautological line bundle on $\PP V(\lambda)$.
\end{remark}

\begin{prop} Schubert varieties can be obtained as intersections of $\Fl(n)$ with the projectivizations of the corresponding Demazure modules:
\[
 X_w'=\Fl(n)\cap\PP D_w(\lambda)\subset \PP V(\lambda).
\]
\end{prop}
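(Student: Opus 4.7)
The plan is to identify both sides of the claimed equality as closed $B^{-}$-invariant subvarieties of $\Fl(n)$ and then match their $T$-fixed points. Under the embedding $\Fl(n) \hookrightarrow \PP V(\lambda)$, the flag variety is the closed $\GL(n)$-orbit of $[v_{-}]$, and its $T$-fixed points are the classes $[\sigma \cdot v_{-}]$ for $\sigma \in S_n$. The convention used to define the opposite Schubert cells places the unique $T$-fixed point of $\Omega_w'$ at $[w_{0}w \cdot v_{-}]$, so that $\Omega_w' = B^{-}\cdot [w_{0}w \cdot v_{-}]$.

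The inclusion $X_w' \subseteq \Fl(n) \cap \PP D_w(\lambda)$ is then essentially formal. By definition $D_w(\lambda)$ is the linear span of $B^{-}\cdot (w_{0}w \cdot v_{-})$, so the projective $B^{-}$-orbit $\Omega_w'$ is contained in $\PP D_w(\lambda)$, and taking closures preserves this containment because $\PP D_w(\lambda)$ is closed in $\PP V(\lambda)$. For the reverse inclusion, I would set $Y = \Fl(n) \cap \PP D_w(\lambda)$ and exploit the fact that $Y$ is closed and $B^{-}$-invariant, since both factors are. Any such subvariety of $\Fl(n)$ decomposes as a finite union of opposite Schubert varieties $X_v'$, indexed by the $T$-fixed points it contains: each such point $p$ yields a $B^{-}$-orbit closure, hence an entire $X_v'$, inside $Y$. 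Verifying $Y \subseteq X_w'$ therefore reduces to proving that every $T$-fixed point $[\sigma \cdot v_{-}] \in \PP D_w(\lambda)$ already lies in $X_w'$.

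The main obstacle is precisely this last step, which amounts to the characterization of the extremal weights occurring in the Demazure module: the weight vector $\sigma \cdot v_{-}$ belongs to $D_w(\lambda)$ if and only if $\sigma \leq w_{0}w$ in the Bruhat order, and this is exactly the Bruhat condition that describes the $T$-fixed points of $X_w'$ (recall $X_v' \subseteq X_w'$ iff $v \geq w$, equivalently the corresponding Weyl group element of the $T$-fixed point satisfies $\sigma \leq w_{0}w$). This matching of extremal weights with a Bruhat interval is the content of Demazure's theorem, typically proved by induction on $\ell(w_{0}w)$ using the Demazure (divided-difference) operators, or alternatively via standard monomial theory for Schubert varieties. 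Once it is in hand, the two closed $B^{-}$-invariant subvarieties $Y$ and $X_w'$ share the same Schubert stratification and therefore coincide, completing the proof.
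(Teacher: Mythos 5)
The paper states this proposition without proof, so there is no source argument to compare against; your proof stands on its own, and it is correct. The forward inclusion $X_w'\subseteq\Fl(n)\cap\PP D_w(\lambda)$ is, as you say, formal: $D_w(\lambda)$ is by definition the span of the $B^{-}$-orbit of the extremal vector, so the projective $B^{-}$-orbit $\Omega_w'$ lies in $\PP D_w(\lambda)$, and closedness of $\PP D_w(\lambda)$ carries this to $X_w'$. For the reverse inclusion your framework is exactly right: $Y=\Fl(n)\cap\PP D_w(\lambda)$ is closed and $B^{-}$-stable in $\Fl(n)$, hence a union of opposite Schubert varieties indexed by the $T$-fixed points it contains, so the whole question collapses to a statement about extremal weight vectors.

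You have correctly located where all the real content sits: the assertion that $\sigma\cdot v_-$ belongs to $D_w(\lambda)$ precisely when the corresponding Weyl group element is $\leq w_0w$ in Bruhat order. This is Demazure's theorem (derivable from the Demazure character formula via the divided-difference operators, or from standard monomial theory), and the proposition is essentially a geometric repackaging of it; citing it as a known input is appropriate. One small point worth making explicit in a final write-up: since $\lambda$ is strictly antidominant, the weights $\sigma(\lambda)$ are pairwise distinct and each extremal weight space is one-dimensional, so the projective condition $[\sigma\cdot v_-]\in\PP D_w(\lambda)$ is equivalent to the linear condition $\sigma\cdot v_-\in D_w(\lambda)$ (because $D_w(\lambda)$ is a $T$-submodule and hence a sum of weight spaces). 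This is what makes the reduction to $T$-fixed points clean. You should also be careful, when writing this up, to pin down the permutation-matrix conventions (the paper's are slightly informal), since whether the fixed point of $\Omega_w'$ is $[w_0w\cdot v_-]$ or $[w^{-1}w_0\cdot v_-]$ depends on whether one composes permutations on the left or the right; this is a bookkeeping issue and does not affect the structure of the argument.
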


Each $D_w(\lambda)$ is a $B^-$-module and, consequently, a $T$-module (as usual, $T$ is the diagonal torus in $\GL(n)$). We can consider its \emph{character}:
\[
 \ch D_w(\lambda)=\sum \mult_{D_w(\lambda)}(\mu) e^\mu,
\]
where the sum is taken over all weights of $D_w(\lambda)$, and $\mult_{D_w(\lambda)}(\mu)$ stands for the multiplicity of weight $\mu$, i.e. the dimension of the subspace of weight $\mu$ in $D_w(\lambda)$.

The character formula for Demazure modules was given by Michel Demazure \cite{Demazure74b}; however, its proof contained a gap, pointed out by Victor Kac. A correct proof was given by H.~H.~Andersen \cite{Andersen85}. We propose a method of computing the characters of Demazure modules for strictly dominant weights using our presentation of Schubert cycles by combinations of faces of Gelfand--Zetlin polytopes.

\begin{definition}\label{def:latticechar} Let $M\subset GZ(\lambda)\cap\ZZ^{n(n-1)/2}$ be a subset of $GZ(\lambda)$ (in our examples $M$ will be equal to a union of faces). Recall that in Subsection~\ref{ssec:GZ_rep} we have described a projection map $\pi\colon GZ(\lambda)\to \wt(\lambda)$ into the weight polytope with the highest weight $\lambda$. Denote by the \emph{lattice character} of $M$ the following formal sum taken over all integer points in $M$:
\[
 \ch M=\sum_{x\in M\cap\ZZ^{n(n-1)/2}} e^{\pi(x)}.
\]
\end{definition}

For example, if $M=GZ(\lambda)$, then $\ch M=\ch V(\lambda)$ is the character of the representation $V(\lambda)$. This formula can be generalized for all Demazure modules:

\begin{theorem}[{\cite[Theorem~5.1]{KiritchenkoSmirnovTimorin12}}]\label{thm:demazure} Let $w\in S_n$ be a permutation, and let $F_1,\dots, F_m$ be the set of reduced Kogan faces of $GZ(\lambda)$ corresponding to $w$ as in Theorem~\ref{thm:main}. Then the character of $D_w{\lambda}$ is equal to the lattice character of the union of these faces.
\[
 \ch D_w(\lambda)=\ch(F_1\cup\dots\cup F_m).
\]
\end{theorem}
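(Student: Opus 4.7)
The plan is to establish the formula by induction on $\ell(w)$, using the Demazure character formula to drive the inductive step. For the base case $w = e$, the only reduced Kogan face is $GZ(\lambda)$ itself (the empty word is the unique reduced decomposition for the identity), and by the Gelfand--Zetlin construction recalled in Subsection~\ref{ssec:GZ_rep} the projection $\pi\colon GZ(\lambda) \to \wt(\lambda)$ sends each integer Gelfand--Zetlin tableau to the weight of the corresponding basis vector of $V(\lambda)$. Thus the lattice character of $GZ(\lambda)$ coincides with $\ch V(\lambda) = \ch D_e(\lambda)$, as required.

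For the inductive step, I would fix a permutation $w$ for which the statement is known and a simple transposition $s_i$ with $\ell(w s_i) = \ell(w) + 1$, and verify the formula for $w s_i$. By the Demazure character formula,
\[
\ch D_{w s_i}(\lambda) \;=\; T_{s_i}\bigl(\ch D_w(\lambda)\bigr),
\]
where $T_{s_i}$ is the $i$-th Demazure operator on $\ZZ[\wt(\lambda)]$. On the combinatorial side, reduced Kogan faces for $w s_i$ are obtained from reduced Kogan faces $F$ for $w$ by appending, at the top of the triangular diagram, an $s_i$-labeled edge in some admissible position; summing over all admissible positions (together with suitable cancellations to account for the reducedness condition) should reproduce $T_{s_i}$ applied to the lattice character of $F$. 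The key combinatorial point to verify is that the integer Gelfand--Zetlin tableaux in the "newly constrained" sub-facets project, under $\pi$, to precisely the $\alpha_i$-string $\mu,\ \mu-\alpha_i,\ \dots,\ s_i(\mu)$ whose sum of exponentials is $T_{s_i}(e^{\mu})$, for each weight $\mu$ contributed by an integer point of $F$.

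The main obstacle is exactly this matching: translating the geometric operation "append an $s_i$-edge" into the algebraic action of $T_{s_i}$ on characters. The difficulty is twofold: one must show that, for each integer point $x\in F$ projecting to $\mu$, the additional integer points introduced by the new edge form a chain in the $\alpha_i$-direction of length $\langle \mu, \alpha_i^\vee\rangle + 1$; and one must check that no spurious integer points appear or are double-counted when several Kogan faces for $w s_i$ share a common sub-face. This requires a careful local analysis of the coordinates $x_{pq}$ along the relevant diagonal of the Gelfand--Zetlin tableau and how they transform under $\pi$.

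A cleaner alternative, hinted at in the remark following Theorem~\ref{thm:main}, is to bypass the induction by invoking the Kogan--Miller toric degeneration \cite{KoganMiller05}: the embedding $\Fl(n) \hookrightarrow \PP V(\lambda)$ flatly degenerates to the toric variety $\Fl^0(n)$ associated with $GZ(\lambda)$, carrying the opposite Schubert variety $X'_w$ to the reducible $T$-stable subscheme whose components are the toric subvarieties corresponding to the reduced Kogan faces for $w$. Since flat degeneration preserves the $T$-character of $H^0$, and since for a union of $T$-invariant toric subvarieties the character of the global sections of the polarizing line bundle is, by the classical combinatorial description, the lattice character of the corresponding union of faces of $GZ(\lambda)$, the formula follows from the identification $D_w(\lambda) \cong H^0(X'_w, \ccL_\lambda|_{X'_w})^*$.
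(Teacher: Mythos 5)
First, a caveat: the survey you are working from does not actually prove this theorem — it quotes it from \cite{KiritchenkoSmirnovTimorin12} — so there is no in-paper argument to compare against, and your proposal has to stand on its own. Judged that way, your first (inductive) route has the Demazure recursion pointed the wrong way. With this paper's conventions, $D_{id}(\lambda)=V(\lambda)$ and $D_{w_0}(\lambda)=\CC\cdot v_-$, so lengthening $w$ \emph{shrinks} the Demazure module: if $\ell(ws_i)=\ell(w)+1$ then $D_{ws_i}(\lambda)\subsetneq D_w(\lambda)$. A Demazure (isobaric divided difference) operator only ever enlarges a character — it replaces each $e^{\mu}$ by the sum over an $\alpha_i$-string — and moreover $T_{s_i}$ is an idempotent projection, so it cannot be inverted to recover the smaller character from the larger one. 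Hence the identity $\ch D_{ws_i}(\lambda)=T_{s_i}(\ch D_w(\lambda))$ cannot hold; the correct recursion is $\ch D_{w}(\lambda)=T_{i}(\ch D_{ws_i}(\lambda))$, and the induction must start at $w=w_0$ (whose unique reduced Kogan face is the distinguished vertex, with $\pi(\mathrm{vertex})=\lambda$, matching $\ch D_{w_0}(\lambda)=e^{\lambda}$) and proceed by \emph{deleting} a letter, i.e.\ by removing an equation so that a coordinate sweeps out an interval — that is what produces the $\alpha_i$-strings. Your combinatorial picture ("append an $s_i$-edge", "additional integer points introduced by the new edge") is oriented the same wrong way: imposing an extra equality removes lattice points rather than adding them. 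Even after reversing the direction, the matching between letter-deletion on reduced Kogan faces and the operator $T_i$, including the control of double-counting on shared subfaces, is the entire content of the theorem (it is Kogan's mitosis-type argument), and you explicitly leave it as an "obstacle"; so the first route is not a proof as written.

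Your second route is the right one and is, in substance, how the result is established in the literature: it is the Kogan--Miller degeneration argument that the survey itself alludes to in the remark after Theorem~\ref{thm:main}. Be explicit, though, about what you are importing from \cite{KoganMiller05}: (i) that the scheme-theoretic special fibre of the flat degeneration of $X_w'$ is \emph{reduced} and that its components are exactly the toric subvarieties attached to the reduced Kogan faces for $w$ — this is the hard theorem, not a formality; (ii) the vanishing/flatness statements needed so that the $T$-character of $H^0$ is constant along the family; and (iii) that $H^0$ of a reduced union of orbit closures in the (singular) Gelfand--Zetlin toric variety is computed by the lattice points of the union of the corresponding faces, which uses reducedness and surjectivity of the restriction maps. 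You must also match the dense torus of the toric variety with the maximal torus of $\GL(n)$ — this is exactly the role of the projection $\pi$ from Subsection~\ref{ssec:GZ_rep} — and keep track of the dualization in $D_w(\lambda)\cong H^0(X_w',\ccL_\lambda|_{X_w'})^*$, which flips the sign of every weight; with the paper's lowest-weight conventions these two sign issues cancel, but that needs to be checked rather than assumed.
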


Evaluating these characters at $1$, we get a formula for the dimension of $D_w(\lambda)$.
\begin{corollary} With the same notation,
\[
 \dim D_w(\lambda)=\#((F_1\cup\dots\cup F_m)\cap\ZZ^{n(n-1)/2}).
\]
\end{corollary}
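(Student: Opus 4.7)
The plan is to deduce the dimension formula directly from Theorem~\ref{thm:demazure} by a single specialization. Concretely, I would apply the augmentation homomorphism $\varepsilon\colon \ZZ[\ZZ^n] \to \ZZ$ defined on the group algebra of the weight lattice by $\varepsilon(e^\mu) = 1$ for every weight $\mu$, to both sides of the character identity
\[
 \ch D_w(\lambda) = \ch(F_1\cup\dots\cup F_m).
\]

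On the left-hand side, unfolding the definition of the character of a $T$-module gives
\[
 \varepsilon(\ch D_w(\lambda)) = \sum_\mu \mult_{D_w(\lambda)}(\mu) = \dim D_w(\lambda),
\]
because $D_w(\lambda)$ decomposes as the direct sum of its weight spaces under the diagonal torus $T$, and $\mult_{D_w(\lambda)}(\mu)$ was defined as the dimension of the weight-$\mu$ subspace. On the right-hand side, by Definition~\ref{def:latticechar} the lattice character is a formal sum of monomials $e^{\pi(x)}$ indexed by the integer points in $F_1 \cup \dots \cup F_m$, so
\[
 \varepsilon(\ch(F_1\cup\dots\cup F_m)) = \sum_{x \in (F_1\cup\dots\cup F_m)\cap\ZZ^{n(n-1)/2}} 1 = \#\bigl((F_1\cup\dots\cup F_m)\cap\ZZ^{n(n-1)/2}\bigr).
\]
Equating the two specializations yields the desired identity.

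There is essentially no obstacle here beyond Theorem~\ref{thm:demazure} itself; the corollary is a formal specialization that forgets the torus-weight grading encoded by $\pi$. The only bookkeeping point worth mentioning is that distinct integer points of $F_1 \cup \dots \cup F_m$ may project under $\pi$ to the same weight $\mu$, but this is precisely how the multiplicity $\mult_{D_w(\lambda)}(\mu)$ is accounted for on the polytope side: each integer preimage of $\mu$ in the union of faces contributes one unit to the total lattice-point count, and summing these units over all $\mu$ recovers $\dim D_w(\lambda)$.
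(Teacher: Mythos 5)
Your argument is correct and coincides with the paper's: the corollary is obtained by evaluating both sides of the character identity of Theorem~\ref{thm:demazure} at $1$ (i.e., applying the augmentation $e^\mu\mapsto 1$), which turns the character of $D_w(\lambda)$ into its dimension and the lattice character into the lattice-point count. Nothing further is needed.
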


Recall that the \emph{degree} of a $d$-dimensional projective variety $X\subset\PP^n$ is defined as the number of points in the intersection of $X$ with a generic $(n-d)$-plane. Of course, the degree depends upon the embedding of $X$ into $\PP^n$. Theorem~\ref{thm:demazure} also provides a way to compute the degrees of Schubert varieties $X_w'$. It turns out to be equal to the total \emph{volume} of the faces corresponding to $w$ times a certain constant.

To be more precise, let $F\subset GZ(\lambda)$ be a $d$-dimensional face of $GZ(\lambda)$. Let us normalize the volume form on its affine span $\RR F$ in such a way that the covolume of the lattice $\ZZ^d\cap\RR F$ in $\RR F$ would be equal to 1. Then the following theorem holds.

\begin{theorem}[{\cite[Theorem~5.4]{KiritchenkoSmirnovTimorin12}}] Let $w\in S_n$. Then, with the notation of Theorem~\ref{thm:demazure}, the degree of the Schubert variety $X_w'\subset \PP V(\lambda)$ equals
\[
 \deg_\lambda X_w'=\left(\frac{n(n-1)}{2}-\ell(w)\right)!\cdot\sum_{i=1}^m \vol(F_i).
\]
\end{theorem}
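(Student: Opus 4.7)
The plan is to compute $\deg_\lambda X_w'$ as $d!$ times the leading coefficient of the Hilbert polynomial of $X_w'\subset\PP V(\lambda)$ with respect to $\ccL_\lambda|_{X_w'}$, and then to evaluate this Hilbert polynomial explicitly via Ehrhart theory applied to the Kogan faces supplied by Theorem~\ref{thm:demazure}. Set $d=\dim X_w'=\frac{n(n-1)}{2}-\ell(w)$. For $k\gg 0$ the Hilbert polynomial reads $P(k)=\dim H^0(X_w',\ccL_\lambda^{\otimes k}|_{X_w'})$, and its leading coefficient is $\deg_\lambda X_w'/d!$. Since $\ccL_\lambda^{\otimes k}=\ccL_{k\lambda}$ on the flag variety, the remark preceding the proposition on $X_w'=\Fl(n)\cap\PP D_w(\lambda)$ identifies $H^0(X_w',\ccL_{k\lambda}|_{X_w'})$ with $D_w(k\lambda)^*$, and therefore $P(k)=\dim D_w(k\lambda)$.

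Next I would apply the corollary to Theorem~\ref{thm:demazure} with $\lambda$ replaced by $k\lambda$, which is again strictly antidominant for every integer $k\geq 1$. The defining inequalities of $GZ(\lambda)$ are linear in $\lambda$, so $GZ(k\lambda)=k\cdot GZ(\lambda)$, and the reduced Kogan faces corresponding to $w$ in $GZ(k\lambda)$ are precisely $kF_1,\dots,kF_m$, since the property of being a Kogan face depends only on which defining inequalities are turned into equalities. Thus
\[
\dim D_w(k\lambda)=\#\bigl((kF_1\cup\cdots\cup kF_m)\cap\ZZ^{n(n-1)/2}\bigr).
\]
Each $F_i$ is a lattice polytope of dimension $d$ in its affine span, and Ehrhart's theorem yields $\#(kF_i\cap\ZZ^{n(n-1)/2})=\vol(F_i)\,k^d+O(k^{d-1})$, with the volume normalization $\vol(F_i)$ matching the one in the statement, since both require the sublattice $\ZZ^{n(n-1)/2}\cap\RR F_i$ to have covolume one inside $\RR F_i$.

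To finish, I would use inclusion--exclusion on the union. For distinct $i\neq j$ the intersection $F_i\cap F_j$ is a face of $GZ(\lambda)$ that is a proper subface of both $F_i$ and $F_j$ (since $F_i$ and $F_j$ are distinct faces of the same dimension $d$), so $\dim(F_i\cap F_j)<d$, and similarly for all higher intersections. Therefore
\[
\#\bigl(k(F_1\cup\cdots\cup F_m)\cap\ZZ^{n(n-1)/2}\bigr)=\Bigl(\sum_{i=1}^m\vol(F_i)\Bigr)k^d+O(k^{d-1}).
\]
Equating the leading coefficient of this expression with that of $P(k)=\frac{\deg_\lambda X_w'}{d!}\,k^d+O(k^{d-1})$ yields the desired formula.

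The main obstacle is the polyhedral bookkeeping in the last step: one must verify carefully that the reduced Kogan faces corresponding to $w$ are pairwise distinct as faces of $GZ(\lambda)$, so that their intersections have strictly smaller dimension and contribute only to lower-order terms. One should also confirm that the faces $F_i$ are lattice polytopes with respect to $\ZZ^{n(n-1)/2}\cap\RR F_i$, which follows from the integrality of the vertices of $GZ(\lambda)$. Both points are routine but deserve a careful check to ensure that Ehrhart's theorem applies in the form we need.
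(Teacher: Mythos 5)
The paper only states this theorem with a citation to \cite{KiritchenkoSmirnovTimorin12} and gives no proof of its own, so there is nothing here to compare against line by line; but your argument is correct, and the Hilbert-polynomial/Ehrhart route you take is precisely the natural one given the machinery set up in Theorem~\ref{thm:demazure} and its corollary. The chain of reductions is sound: since $\ccL_\lambda^{\otimes k}\cong\ccL_{k\lambda}$ and $k\lambda$ is again strictly antidominant, the remark preceding the proposition identifies $h^0\bigl(X_w',\ccL_{k\lambda}|_{X_w'}\bigr)$ with $\dim D_w(k\lambda)$; the corollary converts this to a lattice-point count on $kF_1\cup\dots\cup kF_m$ (using that $GZ(k\lambda)=k\cdot GZ(\lambda)$ and that being a reduced Kogan face is a purely combinatorial condition on the defining equalities, hence invariant under dilation); and inclusion--exclusion isolates the top Ehrhart coefficient $\sum_i\vol(F_i)$ because intersections of distinct $d$-dimensional faces of a polytope are faces of strictly smaller dimension. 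Equating with the leading coefficient $\deg_\lambda X_w'/d!$ of the Hilbert polynomial, where $d=\dim X_w'=\tfrac{n(n-1)}{2}-\ell(w)$, gives the claim.

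Two small points worth making explicit when writing this up carefully. First, the $F_i$ really are lattice polytopes with respect to $\ZZ^{n(n-1)/2}$: this is because $GZ(\lambda)$ has integral vertices for integral $\lambda$, which is a standard but not entirely trivial fact about Gelfand--Zetlin polytopes, so it deserves a reference rather than a passing remark. Second, the $F_i$ all have the same dimension $d$ because a reduced Kogan face of $GZ(\lambda)$ cut out by $\ell(w)$ equations of the form $x_{ij}=x_{i-1,j}$ indeed has codimension exactly $\ell(w)$; this is what makes the inclusion--exclusion step clean, since it guarantees that every pairwise intersection $F_i\cap F_j$ ($i\neq j$) is a proper subface of both and hence contributes only to the $O(k^{d-1})$ terms. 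Both are routine, as you say, but they are exactly the places where a reader would want to pause, so spelling them out would strengthen the write-up.
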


\bibliographystyle{amsalpha}
\bibliography{flags}

\end{document}